\numberwithin{equation}{section}
\newtheorem{proposition}{Proposition}[section]
\newtheorem{theorem}[proposition]{Theorem}
\newtheorem{lemma}[proposition]{Lemma}
\newtheorem{definition}[proposition]{Definition}
\newtheorem{remark}[proposition]{Remark}
\renewenvironment{proof}{\smallskip\noindent\emph{\textbf{Proof.}}%
  \hspace{1pt}}{\hspace{-5pt}{\nobreak\quad\nobreak\hfill\nobreak%
    $\square$\vspace{2pt}\par}\smallskip\goodbreak}
\newenvironment{proofof}[1]{\smallskip\noindent{\textbf{Proof~of~#1.}}%
  \hspace{1pt}}{\hspace{-5pt}{\nobreak\quad\nobreak\hfill\nobreak%
    $\square$\vspace{2pt}\par}\smallskip\goodbreak}
\newcommand{\Id}{\mathinner{\mathrm{Id}}}
\newcommand{\C}[1]{\mathbf{C}^{#1}}
\newcommand{\Cc}[1]{\mathbf{C}_c^{#1}}
\newcommand{\BV}{\mathbf{BV}}
\renewcommand{\L}[1]{{\mathbf{L}^#1}}
\newcommand{\Lloc}[1]{{\mathbf{L}_{\mathbf{loc}}^{#1}}}
\newcommand{\modulo}[1]{{\left|#1\right|}}
\newcommand{\norma}[1]{{\left\|#1\right\|}}
\newcommand{\caratt}[1]{{\chi_{\strut#1}}}
\newcommand{\reali}{{\mathbb{R}}}
\newcommand{\naturali}{{\mathbb{N}}}
\renewcommand{\epsilon}{\varepsilon}
\renewcommand{\phi}{\varphi}
\renewcommand{\theta}{\vartheta}
\renewcommand{\O}{\mathcal{O}(1)}
\newcommand{\tv}{\mathinner{\rm TV}}
\newcommand{\spt}{\mathop{\rm spt}}
\renewcommand{\d}[1]{\mathinner{\mathrm{d}{#1}}}
\renewcommand{\div}{\mathinner{\mathop{{\rm div}}}}
\newcommand{\Caption}[1]{\caption{{\small#1}}}
\newcommand{\sOmega}{\mathop{\smash[b]{\,*_{{\strut\Omega}\!}}}}
\newcommand{\sOmegaF}{\mathop{\,*_{{\strut\Omega}\!}}}
\DeclareMathOperator*{\esssup}{ess\,sup}
\DeclareMathOperator*{\essinf}{ess\,inf\vphantom{p}}
\let\@fnsymbol\@arabic
\title{Non Local Conservation Laws in Bounded Domains}
\author{Rinaldo M.~Colombo\\ INdAM Unit, University of Brescia \\
  Italy \and Elena Rossi \\ Inria, Sophia Antipolis --  M\'editerran\'ee \\
  France}
\date{}
\begin{document}

\maketitle

\begin{abstract}

  \noindent The well posedness for a class of non local systems of
  conservation laws in a bounded domain is proved and various
  stability estimates are provided. This construction is motivated by
  the modelling of crowd dynamics, which also leads to define a non
  local operator adapted to the presence of a boundary. Numerical
  integrations show that the resulting model provides qualitatively
  reasonable solutions.

  \medskip

  \noindent\textit{2000~Mathematics Subject Classification:} 35L65,
  90B20

  \medskip

  \noindent\textit{Keywords:} Crowd Dynamics, Macroscopic Pedestrian
  Model, Non-Local Conservation Laws.

\end{abstract}

\section{Introduction}
\label{sec:Intro}

Non local conservation laws are being developed to model various
phenomena, such as the dynamics of crowd,
see~\cite{ColomboGaravelloMercier, ColomboHertyMercier,
  ColomboLecureuxPerDafermos}; vehicular
traffic,~\cite{ChalonsGoatinVillada, GoatinScialanga}; supply
chains,~\cite{ArmbrusterDegondRinghofer, ColomboHertyMercier};
granular materials,~\cite{AmadoriShen}; sedimentation
phenomena,~\cite{BetancourtBergerKarlsenTory, ChalonsGoatinVillada};
and vortex dynamics,~\cite{BakerLiMorlet}. Often, these models are set
in the whole space $\reali^N$, although the physics might require
their stating in domains with boundaries. Two difficulties typically
motivate this simplification: the rigorous treatment of boundaries and
boundary data in conservation laws is technically quite demanding,
see~\cite{BardosLerouxNedelec, bordo}, and the very meaning of non
local operators in the presence of a boundary is not straightforward,
see~\cite{DuHuangLefloch, GoatinScialanga} for recent different
approaches.

Furthermore, numerical methods for non local conservation laws are
typically developed in the case of the Cauchy problem, i.e., on all of
$\reali$, see~\cite{AmorimColombo, BetancourtBergerKarlsenTory,
  ChalonsGoatinVillada}, or on all $\reali^N$,
see~\cite{AggarwalColomboGoatin}. However, numerical integrations
obviously refer to bounded domains and proper boundary conditions need
to be singled out.

Below we tackle both the difficulties of a careful treatment of
boundary conditions and of a proper use of non local operators in the
presence of a boundary. While tackling these issues, we propose a
rigorous construction yielding the well posedness of a class of non
local conservation laws in bounded domains. Since the different
equations are coupled through non local operators, we obtain the well
posedness for a class of \emph{systems} of conservation laws in
\emph{any} space dimension. The present construction is motivated by
crowd dynamics and specific applications are explicitly considered.

Let $I$ be a real interval and $\Omega$ be a bounded open subset of
$\reali^N$. We describe the movement of $n$ populations, identified by
their densities (or \emph{occupancies})
$\rho \equiv (\rho^1, \ldots, \rho^n)$, through the following system
of non local conservation laws:
\begin{equation}
  \label{eq:1}
  \left\{
    \begin{array}{l@{\qquad}r@{\,}c@{\,}l}
      \partial_t \rho^i
      +
      \div \left[\rho^i \; V^i(t, x, \mathcal{J}^i \rho)\right]
      =
      0
      & (t,x)
      & \in
      & I \times \Omega
        \qquad i=1, \ldots, n
      \\
      \rho (t, \xi) = 0
      & (t, \xi)
      & \in
      & I \times \partial \Omega
      \\
      \rho (0,x) = \rho_o (x)
      & x
      & \in
      &\Omega
    \end{array}
  \right.
\end{equation}
where $\rho_o \in \L1 (\Omega; \reali^n)$ is a given initial datum and
$\mathcal{J}^i$ is a non local operator, so that by the writing in the
first equation of~\eqref{eq:1} we mean
\begin{displaymath}
  \partial_t \rho^i (t,x)
  +
  \div \left[
    \rho^i (t,x) \;
    V^i \! \! \left(t, x, \bigl(\mathcal{J}^i \rho (t)\bigr) (x)\right)
  \right]
  =
  0 \,.
\end{displaymath}
The choice of the zero boundary datum implies that no one can enter
$\Omega$ from outside. Nevertheless, the usual definition of solution
to conservation laws on domains with boundary,
see~\cite{BardosLerouxNedelec, Elena1, Vovelle}, allows that
individuals exit through the boundary.

\medskip

The next section is devoted to the statement of the well posedness
result. Section~\ref{sec:EX} deals with two specific sample
applications to crowd dynamics. Proofs are left to the final
sections~\ref{sec:TD} and~\ref{sec:Crowd}.

\section{Main Result}
\label{sec:MR}

We set $\reali_+ = \left[0, +\infty\right[$. The space dimension $N$,
the number of equations $n$ and the integer $m$ are fixed throughout,
with $N, n, m \geq 1$. We denote by $I$ the time interval $\reali_+$
or $[0,T]$, for a fixed $T>0$. Below, $B (x, \ell)$ for
$x \in \reali^N$ and $\ell > 0$ stands for the closed ball centred at
$x$ with radius $\ell$.

Given the map $V \colon I \times \Omega \times \reali^m \to \reali^N$,
where $(t,x,A) \in I \times \Omega \times \reali^m$ and
$\Omega \subset \reali^N$, we set
\begin{align*}
  \nabla_x V (t,x,A)
  = \
  & \left[
    \partial_{x_k} V_j (t,x,A)
    \right]_{\substack{j=1, \ldots, N \\ k = 1, \ldots, N}}
  \in \reali^{N \times N} \,,
  \\
  \nabla_A V (t,x,A)
  = \
  & \left[
    \partial_{A_l} V_j (t,x,A)
    \right]_{\substack{j=1, \ldots, N \\ l = 1, \ldots, m}} \in \reali^{N \times m},
  \\
  \nabla_{x,A} V (t,x,A)
  = \
  & \left[ \nabla_x V (t,x,A) \quad \nabla_A
    V (t,x,A) \right] \in \reali^{N \times (N+m)} \,,
  \\
  \norma{V (t)}_{\C2 (\Omega \times \reali^m ; \reali^N)}
  =\
  & \norma{V (t)}_{\L\infty (\Omega \times \reali^m ; \reali^N)}
    +
    \norma{\nabla_{x,A} V (t)}_{\L\infty (\Omega \times \reali^m ; \reali^{N\times (N+m)})}
  \\
  &  +
    \norma{\nabla^2_{x,A}
    V (t)}_{\L\infty (\Omega \times \reali^m ; \reali^{N\times(N+m) \times (N+m)})} \,.
\end{align*}
For $\rho \in \L\infty (\Omega; \reali^n)$, we also denote
$\tv (\rho) = \sum_{i=1}^n \tv (\rho^i)$.

We pose the following assumptions:
\begin{description}

\item[($\boldsymbol\Omega$)] $\Omega\subset \reali^N$ is non empty,
  open, connected, bounded and with $\C2$ boundary $\partial\Omega$.

\item[(V)] For $i=1, \ldots, n$,
  $V^i \in (\C0 \cap \L\infty) (I \times \Omega \times \reali^m;
  \reali^N)$; for all $t \in I$,
  $V^i (t) \in \C2 (\Omega \times \reali^m; \reali^N)$ and
  $\norma{V^i (t)}_{\C2 (\Omega \times \reali^m; \reali^N)}$ is
  bounded uniformly in $t$ and $i$, i.e., there exists a positive
  constant $\mathcal{V}$ such that
  $\norma{V^i (t)}_{\C2 (\Omega \times \reali^m; \reali^N)} \leq
  \mathcal{V}$ for all $t \in I$ and all $i=1, \ldots,n$.

\item[(J)] For $i=1, \ldots, n$,
  $\mathcal{J}^i \colon \L1 (\Omega;\reali^n) \to \C2
  (\Omega; \reali^m)$ is such that there exists a positive
  $K$ and a weakly increasing map
  $\mathcal{K} \in \Lloc\infty (\reali_+; \reali_+)$ such that
  \begin{enumerate}[\bf(J.1)]
  \item for all $r \in \L1 (\Omega;\reali^n)$,
    \begin{align*}
      \norma{\mathcal{J}^i (r)}_{\L\infty (\Omega;\reali^m)}
      \leq \
      & K \, \norma{r}_{\L1 (\Omega;\reali^n)} \, , % o \L\infty
      \\
      \norma{\nabla_x \mathcal{J}^i (r)}_{\L\infty (\Omega;\reali^{m\times N})}
      \leq \
      & K \, \norma{r}_{\L1 (\Omega;\reali^n)} \, , % o \L\infty
      \\
      \norma{\nabla_x^2 \mathcal{J}^i (r)}_{\L\infty (\Omega;\reali^{m\times N \times N})}
      \leq \
      & \mathcal{K}\left(\norma{r}_{\L1 (\Omega;\reali^n)}\right) \;
        \norma{r}_{\L1 (\Omega;\reali^n)} \, .
    \end{align*}

  \item for all $r_1, \, r_2 \in \L1 (\Omega;\reali^n)$
    \begin{align*}
      \norma{\mathcal{J}^i (r_1) - \mathcal{J}^i (r_2)}_{\L\infty (\Omega;\reali^m)}
      \leq  \
      & K \; \norma{r_1 - r_2}_{\L1 (\Omega;\reali^n)} \, ,
      \\
      \norma{\nabla_x\left(\mathcal{J}^i (r_1)
      - \mathcal{J}^i (r_2)\right)}_{\L\infty (\Omega;\reali^{m \times N})}
      \leq  \
      & \mathcal{K}\left(\norma{r_1}_{\L1 (\Omega;\reali^n)}\right) \;
        \norma{r_1 - r_2}_{\L1 (\Omega;\reali^n)} \, .
    \end{align*}
  \end{enumerate}

\end{description}

\noindent Throughout, $\O$ denotes a constant dependent only on norms
of the functions in the assumptions above, in particular it is
independent of time.

Recall that if $\Omega$ satisfies~\textbf{($\boldsymbol\Omega$)}, then
it also enjoys the \emph{interior sphere condition with radius
  $r > 0$}, in the sense that for all $\xi \in \partial\Omega$, there
exists $x \in \Omega$ such that $B (x,\ell) \subseteq \Omega$ and
$\xi \in \partial B (x,\ell)$ see~\cite[Section~6.4.2]{Evans}
and~\cite[Section~3.2]{GilbargTrudinger}.

In conservation laws, boundary conditions are enforced along the
boundary only where characteristic velocities enter the domain, so
that admissible jump discontinuities between boundary data and
boundary trace of the solution have to be selected. This is provided
by the following definition, based on \emph{regular entropy
  solutions}, see~\cite[Definition~3.3]{Elena1},
\cite[Definition~2.2]{Vovelle} and Definition~\ref{def:besol} below.

\begin{definition}
  \label{def:sol}
  A map $\rho \in \C0\left(I; \L1 (\Omega; \reali^n)\right)$ is a
  \emph{solution} to~\eqref{eq:1} whenever, setting
  $u^i (t,x) = V^i\left(t, x, \left(\mathcal{J}^i \rho (t) \right)
    (x)\right)$, for $i=1, \ldots, n$, the map $\rho^i$ is a regular
  entropy solution to
  \begin{equation}
    \label{eq:1sol}
    \left\{
      \begin{array}{l@{\qquad}r@{\,}c@{\,}l}
        \partial_t \rho^i
        +
        \div \left[\rho^i \; u^i(t, x)\right]
        =
        0
        & (t,x)
        & \in
        & I \times \Omega \,,
        \\
        \rho^i (t, \xi) = 0
        & (t, \xi)
        & \in
        & I \times \partial \Omega \,,
        \\
        \rho^i (0,x) = \rho^i_o (x)
        & x
        & \in
        &\Omega \,.
      \end{array}
    \right.
  \end{equation}
\end{definition}

We are now ready to state the main result of this paper.

\begin{theorem}
  \label{thm:fixpt}
  Let~$\boldsymbol{(\Omega)}$ hold. Fix $V$
  satisfying~\emph{\textbf{(V)}} and $\mathcal{J}$
  satisfying~\emph{\textbf{(J)}}. Then:
  \begin{enumerate}[\bfseries(1)]

  \item\label{item:eu} For any
    $\rho_o \in (\L\infty \cap \BV) (\Omega; \reali^n)$, there exists
    a unique $\rho \in \L\infty (I\times\Omega; \reali^n)$
    solving~\eqref{eq:1} in the sense of Definition~\ref{def:sol}.

  \item\label{item:bounds} For any
    $\rho_o \in (\L\infty \cap \BV) (\Omega; \reali^n)$ and for any
    $t \in I$,
    \begin{align*}
      \norma{\rho (t)}_{\L1 (\Omega;\reali^n)}
      \leq \
      & \norma{\rho_o}_{\L1 (\Omega;\reali^n)}
      \\
      \norma{\rho (t)}_{\L\infty (\Omega;\reali^n)}
      \leq \
      & \norma{\rho_o}_{\L\infty (\Omega;\reali^n)}
        \exp\left(
        t \, \mathcal{V} \left(
        1+ K \, \norma{\rho_o}_{\L1 (\Omega;\reali^n)}
        \right)
        \right)
      \\
      \tv\left(\rho (t)\right)
      \leq \
      & \exp\left(
        t \, \mathcal{V}
        \left(
        1 + K \,  \norma{\rho_o}_{\L1 (\Omega;\reali^n)}
        \right)
        \right)
      \\
      & \times
        \biggl[
        \mathcal{O} (1) n \, \norma{\rho_o}_{\L\infty (\Omega;\reali^n)}
        +
        \tv\left( \rho_o \right)
        +
        n \, t \, \norma{\rho_o}_{\L1 (\Omega;\reali^n)} \,
        \mathcal{V}
      \\
      & \quad \times
        \left(
        1 +  \norma{\rho_o}_{\L1 (\Omega;\reali^n)}
        \left(
        K
        + K^2  \norma{\rho_o}_{\L1 (\Omega;\reali^n)}
        + \mathcal{K}\!\left( \norma{\rho_o}_{\L1 (\Omega;\reali^n)}  \right)
        \right)
        \right)
        \biggr] .
    \end{align*}

  \item\label{item:liptime} For any
    $\rho_o \in (\L\infty \cap \BV) (\Omega; \reali^n)$ and for any
    $t, s \in I$,
    \begin{displaymath}
      \norma{\rho (t) - \rho (s)}_{\L1 (\Omega;\reali^n)}
      \leq
      \tv\left( \rho\left(\max\left\{t,s\right\}\right) \right)
      \modulo{t-s}.
    \end{displaymath}

  \item\label{item:lipid} For any initial data
    $\rho_o, \tilde\rho_o \in (\L\infty \cap \BV) (\Omega; \reali^n)$
    and for any $t \in I$, calling $\rho$ and $\tilde\rho$ the
    corresponding solutions to~\eqref{eq:1},
    \begin{displaymath}
      \norma{\rho (t) - \tilde\rho (t)}_{\L1 (\Omega;\reali^n)}
      \leq
      e^{\mathcal{L}(t)} \,
      \norma{\rho_o - \tilde \rho_o}_{\L1 (\Omega;\reali^n)},
    \end{displaymath}
    where $\mathcal{L}(t)>0$ depends on~$\boldsymbol{(\Omega)}$,
    \emph{\textbf{(V)}}, \emph{\textbf{(J)}} and on
    \begin{displaymath}
      R = \max\left\{
        \norma{\rho_o}_{\L1 (\Omega;\reali^n)}, \,
        \norma{\tilde \rho_o}_{\L1 (\Omega;\reali^n)}, \,
        \norma{\rho_o}_{\L\infty (\Omega;\reali^n)}, \,
        \norma{\tilde \rho_o}_{\L\infty (\Omega;\reali^n)}, \,
        \tv (\rho_o), \,
        \tv (\tilde \rho_o)
      \right\}.
    \end{displaymath}

  \item\label{item:stab} Fix
    $\rho_o \in (\L\infty \cap \BV) (\Omega; \reali^n)$. Let
    $\tilde V$ satisfy~\emph{\textbf{(V)}} with the same constant
    $\mathcal{V}$. Call $\rho$ and $\tilde \rho$ the solutions to
    problem~\eqref{eq:1} corresponding respectively to the choices $V$
    and $\tilde V$. Then, for any $t \in I$,
    \begin{displaymath}
      \norma{\rho (t) - \tilde\rho (t)}_{\L1 (\Omega;\reali^n)}
      \leq
      \mathcal{C} (t)
      \int_0^t
      \norma{V (s) - \tilde V (s)}_{\C1(\Omega \times \reali^m; \reali^{nN})} \d{s}
    \end{displaymath}
    where $\mathcal{C}$ depends on~$\boldsymbol{(\Omega)}$,
    \emph{\textbf{(V)}}, \emph{\textbf{(J)}} and on the initial datum,
    see~\eqref{eq:21}.

  \item\label{it:positivity} For $i=1, \ldots, n$, if
    $\rho^i_o \geq 0$ a.e.~in $\Omega$, then $\rho^i (t) \geq 0$
    a.e.~in $\Omega$ for all $t \in I$.

  \end{enumerate}
\end{theorem}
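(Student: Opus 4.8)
The plan is to solve~\eqref{eq:1} by a fixed point argument built on top of the linear theory of conservation laws in $\Omega$. Given $r$ in a suitable subset of $\C0\big(I;\L1(\Omega;\reali^n)\big)$, for each $i=1,\dots,n$ define the vector field $u^i_r(t,x) = V^i\big(t,x,(\mathcal{J}^i r(t))(x)\big)$: by \textbf{(V)} and \textbf{(J.1)} it is bounded and, for every $t$, of class $\C2$ in $x$, with all relevant norms controlled in terms of $\mathcal{V}$, $K$, $\mathcal{K}$ and $\norma{r(t)}_{\L1(\Omega;\reali^n)}$. For each $i$ separately, let $\rho^i$ be the regular entropy solution of the \emph{linear} initial--boundary value problem~\eqref{eq:1sol} driven by this (now known) field, and set $\mathcal{T}(r) = (\rho^1,\dots,\rho^n)$. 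By Definition~\ref{def:sol}, a fixed point of $\mathcal{T}$ is exactly the sought solution, so items~\ref{item:eu}--\ref{item:liptime} will follow once $\mathcal{T}$ is shown to be a contraction on an invariant set carrying the stated bounds.

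The argument rests on the linear theory developed in Section~\ref{sec:TD} (in terms of boundary entropy solutions, Definition~\ref{def:besol}), which must provide, for $\partial_t w + \div(w\,u)=0$ in $I\times\Omega$ with zero boundary datum and datum $w_o\in(\L\infty\cap\BV)(\Omega)$: well posedness; the mass estimate $\norma{w(t)}_{\L1}\le\norma{w_o}_{\L1}$, since the zero boundary datum forbids inflow; the growth estimate $\norma{w(t)}_{\L\infty}\le\norma{w_o}_{\L\infty}\exp\!\big(\int_0^t\norma{\div u(s)}_{\L\infty}\d{s}\big)$; a $\tv$ bound up to $\partial\Omega$ in terms of $\norma{\nabla_x u}_{\L\infty}$, $\norma{\nabla_x^2 u}_{\L\infty}$ and the geometry of $\partial\Omega$; the time regularity $\norma{w(t)-w(s)}_{\L1}\le\tv\big(w(\max\{t,s\})\big)\,\modulo{t-s}$; and an $\L1$ stability estimate with respect to the field, of the form $\norma{w(t)-\tilde w(t)}_{\L1}\le\O\,\big(\norma{w_o-\tilde w_o}_{\L1}+\int_0^t\norma{u(s)-\tilde u(s)}_{\C1(\Omega;\reali^N)}\d{s}\big)$, with constant depending on the $\C2$ size of the fields. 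Establishing the $\tv$ bound is the main obstacle: total variation may be created in boundary layers, and controlling it requires the $\C2$ regularity of $\partial\Omega$ and the interior sphere condition recalled above; the other estimates follow by more standard doubling-of-variables and characteristic arguments adapted to $\Omega$.

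Granting this, the mass estimate gives $\norma{r(t)}_{\L1}\le\norma{\rho_o}_{\L1}$ along any orbit of $\mathcal{T}$, so the $\C2$ norms of the fields $u^i_r$ are bounded uniformly in $t$ and along the orbit, with constants depending only on $\mathcal{V}$, $K$, $\mathcal{K}(\norma{\rho_o}_{\L1})$ and $\norma{\rho_o}_{\L1}$. Feeding these into the linear estimates singles out a closed, bounded, convex set $\mathcal{D}\subset\C0\big(I;\L1(\Omega;\reali^n)\big)$ — imposing the $\L\infty$ and $\tv$ bounds of item~\ref{item:bounds} and the time modulus of item~\ref{item:liptime} — which $\mathcal{T}$ maps into itself. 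For the contraction, if $r_1,r_2\in\mathcal{D}$ then \textbf{(V)} together with \textbf{(J.2)} yields $\norma{u^i_{r_1}(t)-u^i_{r_2}(t)}_{\C1(\Omega;\reali^N)}\le\O\,\norma{r_1(t)-r_2(t)}_{\L1}$, with $\O$ depending on $\norma{\rho_o}_{\L1}$; the $\L1$ stability estimate of the linear theory then gives $\norma{\mathcal{T}(r_1)(t)-\mathcal{T}(r_2)(t)}_{\L1}\le\O\int_0^t\norma{r_1(s)-r_2(s)}_{\L1}\d{s}$. Hence $\mathcal{T}$ is a contraction on $\mathcal{D}$ over a short interval $[0,\tau]$, with $\tau$ and the constant depending only on $\norma{\rho_o}_{\L1}$; since the latter is non-increasing, the same step $\tau$ works on $[\tau,2\tau],[2\tau,3\tau],\dots$, covering all of $I$ and yielding the unique solution of item~\ref{item:eu}. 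The a priori estimates pass to the fixed point, giving items~\ref{item:bounds} and~\ref{item:liptime}.

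For item~\ref{item:lipid}, view $\rho$ and $\tilde\rho$ as regular entropy solutions of~\eqref{eq:1sol} driven by the fields $u^i_\rho$ and $u^i_{\tilde\rho}$; combining the $\L1$ stability of the linear theory with $\norma{u^i_\rho(s)-u^i_{\tilde\rho}(s)}_{\C1(\Omega;\reali^N)}\le\O\,\norma{\rho(s)-\tilde\rho(s)}_{\L1}$ (from \textbf{(V)} and \textbf{(J.2)}, with $\O$ depending on $R$) and applying Gronwall's Lemma produces the bound with $\mathcal{L}(t)$ as claimed. Item~\ref{item:stab} is analogous: now the two fields are $V^i(\cdot,\cdot,\mathcal{J}^i\rho)$ and $\tilde V^i(\cdot,\cdot,\mathcal{J}^i\tilde\rho)$, whose $\C1$ distance is controlled by $\norma{V(s)-\tilde V(s)}_{\C1(\Omega\times\reali^m;\reali^{nN})}$ plus a term in $\norma{\rho(s)-\tilde\rho(s)}_{\L1}$, and Gronwall's Lemma again closes the estimate with the asserted $\mathcal{C}(t)$. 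Finally, item~\ref{it:positivity} is inherited from the linear problem~\eqref{eq:1sol}, which preserves nonnegativity: in conservative form with zero inflow datum, a component starting nonnegative cannot develop negative values, a property transmitted to the fixed point $\rho$.
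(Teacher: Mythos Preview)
Your overall architecture matches the paper's proof: freeze $r$, solve the linear IBVP~\eqref{eq:1sol}, and look for a fixed point of $\mathcal{T}$; the a~priori bounds, the Gronwall argument for item~\ref{item:stab}, and the positivity via the linear representation all coincide with what the paper does. There is, however, one genuine gap in your global continuation argument.

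You assert that the contraction time $\tau$ ``depends only on $\norma{\rho_o}_{\L1}$'', which is non--increasing, so the same step can be iterated indefinitely. This is not correct. The $\L1$ stability estimate for the linear problem (Lemma~\ref{lem:stab} in the paper) has the form
\[
\norma{r(t)-\tilde r(t)}_{\L1}
\le
e^{\kappa(t)}\!\int_0^t\!\norma{(u-\tilde u)(s)}_{\L\infty}\d{s}
\Bigl[\O\,\norma{r_o}_{\L\infty}+\tv(r_o)+\norma{r_o}_{\L1}\,\kappa_1(t)\Bigr]
+\norma{r_o}_{\L1}\!\int_0^t\!\norma{\div(u-\tilde u)(s)}_{\L\infty}\d{s},
\]
so the Lipschitz constant of $\mathcal{T}$ genuinely involves $\norma{\rho_o}_{\L\infty}$ and $\tv(\rho_o)$, not merely $\norma{\rho_o}_{\L1}$. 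Along the iteration, the ``initial datum'' at step $k$ is $\rho(T_{k-1})$, whose $\L\infty$--norm and total variation \emph{grow} in time according to item~\ref{item:bounds}. Consequently the admissible step lengths $T_k-T_{k-1}$ may shrink, and the uniform--step argument you propose does not close.

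The paper deals with this by defining $T_k$ through a relation (see~\eqref{eq:4}) that incorporates the growth of $\norma{\rho(T_{k-1})}_{\L\infty}$ and $\tv(\rho(T_{k-1}))$, and then argues by contradiction: if the sequence $(T_k)$ remained bounded below $\sup I$, the left hand side of~\eqref{eq:4} would tend to $0$ while the right hand side is $1/2$. This is the missing ingredient in your outline. A related but minor imprecision: you describe the linear stability constant as depending only on ``the $\C2$ size of the fields''; as the formula above shows, the constant also carries $\tv(r_o)$ and $\norma{r_o}_{\L\infty}$, and this is exactly why the continuation needs more care.

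For item~\ref{item:lipid} your Gronwall approach is fine in principle and in fact slightly more direct than the paper's, which instead iterates the contraction inequality over the intervals $[T_{k-1},T_k]$ and bounds the number of steps; both lead to an exponential bound with a constant depending on $R$.
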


\noindent Section~\ref{sec:TD} is devoted to the proof of the theorem
above. Here, we underline that the total variation estimate
in~(\ref{item:bounds})~is qualitatively different from the analogous
one in the case of no boundary, see Remark~\ref{rem:note}.

\section{The Case of Crowd Dynamics}
\label{sec:EX}

The above analytic results are motivated also by their applicability
to equations describing the motion of a crowd, identified through its
time and space dependent density $\rho = \rho (t,x)$. Various
macroscopic crowd dynamics models based on non local conservation laws
were recently considered, see for
instance~\cite{ColomboGaravelloMercier, ColomboHertyMercier,
  ColomboLecureuxPerDafermos}, as well
as~\cite[Section~3.1]{AggarwalColomboGoatin}. Therein, typically, non
local interactions among individuals are described through space
convolution terms like $\rho (t) * \eta$, for a suitable averaging
kernel $\eta$. We refer to~\cite{CristianiPiccoliTosin} for a
different approach and to~\cite{BellomoPiccoliTosin} for a recent
review on the modelling of crowd dynamics.

Due to the absence of well posedness results in bounded domains, none
of the results cited above considers the presence of boundaries. On
the one hand, the choice of the crowd velocity may well encode the
presence of boundaries but, on the other hand, the visual horizon of
each individual should definitely not neglect the presence of the
boundary. With this motivation, below we introduce a non local
operator consistent with the presence of boundaries and show how the
theoretical results above allow to formulate equations where each
individual's horizon is affected by the presence of the walls.

To this aim, we use the following modification of the usual
convolution product
\begin{align}
  \label{eq:*omega}
  (\rho \sOmega \eta) (x)
  = \
  & \dfrac{1}{z (x)} \int_\Omega \rho (y) \; \eta (x-y) \d{y} \,,
    \qquad \mbox{where}
  \\
  \label{eq:z}
  z (x)
  = \
  & \int_{\Omega} \eta (x-y) \d{y} \,.
\end{align}
A reasonable assumption on the kernel $\eta$ is:

\begin{description}
\item[($\boldsymbol{\eta}$)] $\eta (x) = \tilde\eta (\norma{x})$,
  where $\tilde\eta \in \C2 (\reali_+; \reali)$,
  $\spt \tilde\eta = [0, \ell_\eta]$, where $\ell_\eta > 0$,
  $\tilde\eta' \leq 0$ and $\int_{\reali^N} \eta (\xi) \d\xi = 1$.
\end{description}

\noindent In other words, $(\rho \sOmega \eta) (x)$ is an average of
the crowd density $\rho$ in $\Omega$ around $x$. Note also that
$\rho \sOmega \eta$ is well defined by~\eqref{eq:*omega}: indeed,
under assumptions~\textbf{($\boldsymbol\Omega$)}
and~\textbf{($\boldsymbol\eta$)}, $z$ may not vanish in $\Omega$, see
Lemma~\ref{lem:z}. As a side remark, note
that~\textbf{($\boldsymbol\eta$)} ensures $\eta\geq 0$.

We investigate the properties of the non local operator defined
through~\eqref{eq:*omega}--\eqref{eq:z}.

\begin{lemma}
  \label{lem:Omega}
  Let $\Omega$ satisfy~$\boldsymbol{(\Omega)}$, $\eta$
  satisfy~$\boldsymbol{(\eta)}$ and
  $\rho \in \L\infty (\Omega; \reali_+)$. Then,
  \begin{displaymath}
    (\rho \sOmega \eta) \in \C2 (\Omega; \reali_+)
    \quad \mbox{ and }\quad
    (\rho \sOmega \eta) (x)
    \in
    [
    \essinf_{B (x,\ell_\eta) \cap \Omega} \rho,
    \esssup_{B (x,\ell_\eta) \cap \Omega} \rho
    ]
  \end{displaymath}
  so that, in particular,
  $(\rho \sOmega \eta) (\Omega) \subseteq [0, \norma{\rho}_{\L\infty
    (\Omega;\reali)}]$.
\end{lemma}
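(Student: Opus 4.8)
The plan is to establish the three assertions of Lemma~\ref{lem:Omega} in order, starting from the pointwise bounds, which are the most elementary, then deriving the averaging inclusion, and finally upgrading regularity. First I would recall from Lemma~\ref{lem:z} that under $\boldsymbol{(\Omega)}$ and $\boldsymbol{(\eta)}$ the weight $z(x) = \int_\Omega \eta(x-y)\d y$ is strictly positive on $\Omega$, so that $(\rho \sOmega \eta)(x)$ in~\eqref{eq:*omega} is a well-defined real number for every $x \in \Omega$; moreover $\boldsymbol{(\eta)}$ forces $\eta \geq 0$ (since $\tilde\eta' \leq 0$ on $[0,\ell_\eta]$ and $\tilde\eta$ vanishes past $\ell_\eta$), hence $z(x) > 0$ and the integrand $\rho(y)\,\eta(x-y)$ is nonnegative whenever $\rho \geq 0$.

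For the pointwise inclusion, fix $x \in \Omega$ and observe that $\eta(x-y) = 0$ for $y \notin B(x,\ell_\eta)$ because $\spt\tilde\eta = [0,\ell_\eta]$; hence the integrals in both~\eqref{eq:*omega} and~\eqref{eq:z} are effectively over $B(x,\ell_\eta) \cap \Omega$. Writing $M = \esssup_{B(x,\ell_\eta)\cap\Omega}\rho$ and $m = \essinf_{B(x,\ell_\eta)\cap\Omega}\rho$, we have $m\,\eta(x-y) \leq \rho(y)\,\eta(x-y) \leq M\,\eta(x-y)$ for a.e.\ $y \in \Omega$; integrating over $\Omega$ and dividing by $z(x) > 0$ gives exactly $m \leq (\rho\sOmega\eta)(x) \leq M$. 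Since $\rho \geq 0$ gives $m \geq 0$ and $M \leq \norma{\rho}_{\L\infty(\Omega;\reali)}$, the containment $(\rho\sOmega\eta)(\Omega) \subseteq [0,\norma{\rho}_{\L\infty(\Omega;\reali)}]$ follows immediately; in particular $(\rho\sOmega\eta)$ takes values in $\reali_+$.

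The remaining point is the $\C2$ regularity. I would write $(\rho\sOmega\eta)(x) = g(x)/z(x)$ with $g(x) = \int_\Omega \rho(y)\,\eta(x-y)\d y$, and differentiate under the integral sign: since $\eta \in \C2(\reali^N;\reali)$ with compact support (so $\eta$, $\nabla\eta$, $\nabla^2\eta$ are bounded) and $\rho \in \L\infty(\Omega) \subseteq \L1(\Omega)$ on the bounded set $\Omega$, the standard dominated-convergence argument shows $g \in \C2(\Omega;\reali)$ with $\nabla^k g(x) = \int_\Omega \rho(y)\,(\nabla^k\eta)(x-y)\d y$ for $k = 1,2$. The same argument applied with $\rho \equiv 1$ shows $z \in \C2(\Omega;\reali)$, and since $z > 0$ on $\Omega$ by Lemma~\ref{lem:z}, the quotient $g/z$ is $\C2$ on $\Omega$ by the quotient rule. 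Combining with the pointwise bound already established yields $(\rho\sOmega\eta) \in \C2(\Omega;\reali_+)$, completing the proof. The only mild subtlety — and the step most worth stating carefully — is justifying differentiation under the integral near $\partial\Omega$, where the domain of integration $B(x,\ell_\eta)\cap\Omega$ varies with $x$; this is harmless because the variation is absorbed by the fact that $\eta$ and its derivatives are continuous on all of $\reali^N$ (so one may integrate over $\reali^N$ after extending $\rho$ by zero outside $\Omega$, with no boundary terms appearing), but it deserves an explicit remark rather than being swept aside.
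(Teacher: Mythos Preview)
Your proposal is correct and follows essentially the same approach as the paper's own proof, which is quite terse: the paper likewise invokes Lemma~\ref{lem:z} and standard convolution properties for the $\C2$ regularity, and obtains the pointwise bounds by pulling the essential supremum (respectively infimum) of $\rho$ over $B(x,\ell_\eta)\cap\Omega$ outside the integral and cancelling $z(x)$. Your version is simply more detailed, in particular your remark on extending $\rho$ by zero to justify differentiation under the integral near $\partial\Omega$ makes explicit a point the paper leaves implicit.
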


\noindent The proof is in Section~\ref{sec:Crowd}, where other
properties of the modified convolution~\eqref{eq:*omega}--\eqref{eq:z}
are proved.

As a sample of the possible applications of Theorem~\ref{thm:fixpt} to
crowd dynamics, we consider below two specific situations, where we
set $N = 2$, write $x \equiv (x_1, x_2)$ for the spatial coordinate
and denote $\partial_1 = \partial_{x_1}$,
$\partial_2 = \partial_{x_2}$,

The numerical integrations below are obtained through a suitable
adaptation of the Lax--Friedrichs method, on the basis
of~\cite{AggarwalColomboGoatin, AmorimColombo}, adapted as suggested
in~\cite[Formula~(14)]{BlandinGoatin} to reduce the effects of the
numerical viscosity.

For further results on crowd modelling, see for
instance~\cite{ColomboGaravelloMercier, HelbingEtAlii2001,
  Hoogendoorn2005147} and the references therein.

\subsection{Evacuation from a Room}

We now use~\eqref{eq:1} to describe the evacuation of a
region, say $\Omega$. To this aim, consider the equation:
\begin{equation}
  \label{eq:23}
  \partial_t \rho
  +
  \div \left[
    \rho \;\,
    v (\rho\sOmega\eta_1) \,
    \left(
      w (x)
      -
      \beta
      \dfrac{\nabla (\rho \sOmegaF\eta_2)}{\sqrt{1+\norma{\nabla (\rho\sOmega\eta_2)}^2}} \right)
  \right]
  =
  0 \,.
\end{equation}
Here, each individual adjusts her/his speed according to the average
population density around her/him, according to the function $v$,
which is $\C2$, bounded and non increasing. The velocity direction of
each individual is given by the fixed $\C2$ vector field $w$, which
essentially describes some sort of \emph{natural} path to the exit,
the exit being the portion of $\partial\Omega$ where $w$ points
outwards of $\Omega$. This direction is then adjusted by the non local
term
$- \beta \; \nabla (\rho \sOmegaF\eta_2) \left/ \sqrt{1+\norma{\nabla
      (\rho\sOmega\eta_2)}} \right.$, which describes the tendency of
avoiding regions with high (average) density gradient,
see~\cite{ColomboGaravelloMercier, ColomboLecureuxPerDafermos}.

\begin{lemma}
  \label{lem:Corridor}
  Let $\Omega$ satisfy~$\boldsymbol{(\Omega)}$. Assume that
  $v \in \C2 (\reali_+; \reali_+)$ and $w \in \C2 (\Omega;\reali^2)$
  are bounded in $\C2$. If moreover $\eta_1$, $\eta_2$
  satisfy~$\boldsymbol{(\eta)}$ with $\eta_2$ of class $\C3$, then
  equation~\eqref{eq:23} fits into~\eqref{eq:1}, \emph{\textbf{(V)}}
  and~\emph{\textbf{(J)}} hold, so that Theorem~\ref{thm:fixpt}
  applies.
\end{lemma}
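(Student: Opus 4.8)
The plan is to rewrite \eqref{eq:23} in the form of the first equation of \eqref{eq:1}, identifying the non local operator $\mathcal{J}$ and the velocity field $V$, and then verify that the abstract hypotheses \textbf{(V)} and \textbf{(J)} are met. Concretely, take $n=1$, $N=2$ and choose $m$ large enough to collect all the non local quantities appearing in \eqref{eq:23}, namely $m = 1 + 1 + 2 = 4$, setting $\mathcal{J}(\rho) = \bigl(\rho \sOmega \eta_1,\ \rho \sOmega \eta_2,\ \nabla(\rho\sOmega\eta_2)\bigr)$ and, for $A = (a, b, p) \in \reali \times \reali \times \reali^2$, $V(t,x,A) = v(a)\bigl(w(x) - \beta\, p/\sqrt{1+\norma{p}^2}\bigr)$. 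With these choices, \eqref{eq:23} is literally the $i=1$ line of \eqref{eq:1}; note $V$ does not depend on $t$, and the boundary and initial conditions match trivially.

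First I would check \textbf{(V)}. Since $v \in \C2(\reali_+;\reali_+)$ is bounded with bounded derivatives and $w \in \C2(\Omega;\reali^2)$ is bounded in $\C2$, it suffices to observe that the map $p \mapsto p/\sqrt{1+\norma{p}^2}$ is $\C\infty$ on $\reali^2$ with all derivatives up to order two bounded (its first derivative decays like $\norma{p}^{-1}$ and the function itself is bounded by $1$), so the composition and product defining $V$ is $\C2$ in $(x,A)$ with $\norma{V(t)}_{\C2}$ bounded uniformly; continuity in $t$ is immediate as $V$ is $t$-independent. This step is routine.

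The substantive step is \textbf{(J)}, i.e.\ the bounds \textbf{(J.1)}--\textbf{(J.2)} for $\mathcal{J}$. Here I would invoke the properties of the modified convolution $\sOmega$ established in Section~\ref{sec:Crowd}: Lemma~\ref{lem:Omega} already gives the $\L\infty$ bound (and non-negativity of the averaged density), while the remaining estimates — $\L1$-to-$\L\infty$ bounds on $\nabla_x(\rho\sOmega\eta)$ and $\nabla_x^2(\rho\sOmega\eta)$, and the Lipschitz-type bounds in \textbf{(J.2)} — follow by differentiating \eqref{eq:*omega}--\eqref{eq:z} under the integral sign, using that $z$ is bounded away from zero on $\Omega$ (Lemma~\ref{lem:z}) together with the $\C2$, resp.\ $\C3$, regularity and compact support of $\eta_1$, $\eta_2$. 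The one point requiring care is that $\mathcal{J}$ must land in $\C2(\Omega;\reali^4)$: the first two components $\rho\sOmega\eta_j$ are $\C2$ by Lemma~\ref{lem:Omega}, but the last two components $\nabla(\rho\sOmega\eta_2)$ must themselves be $\C2$, which forces $\eta_2 \in \C3$ — precisely the extra hypothesis in the statement — and correspondingly the $\nabla_x^2$-estimate and the $\nabla_x$-Lipschitz estimate for this component cost one more derivative on $\eta_2$, producing the factor $\mathcal{K}$. I expect this bookkeeping — tracking which constant is the uniform $K$ and which must be allowed to grow through $\mathcal{K}$, and checking that the $z(x)^{-1}$ weight and its derivatives do not spoil the linear dependence on $\norma{\rho}_{\L1}$ — to be the main (though not deep) obstacle; once it is in place, Theorem~\ref{thm:fixpt} applies directly.
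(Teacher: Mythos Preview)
Your proposal is correct and follows essentially the same route as the paper. Two small remarks: the paper takes $m=3$ rather than $m=4$, omitting your redundant component $\rho\sOmega\eta_2$ (your $b$-variable, on which $V$ does not depend); and since $\mathcal{J}$ is \emph{linear} in $\rho$, the bookkeeping you anticipate between $K$ and $\mathcal{K}$ is trivial --- all constants are independent of $\norma{\rho}_{\L1}$, and \textbf{(J.2)} is immediate from \textbf{(J.1)}.
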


\noindent The proof is deferred to Section~\ref{sec:Crowd}.

\smallskip

As a specific example we consider a square room, say $\Omega$, with a
door $D$, with $D \subseteq \partial \Omega$, and two columns each of
size $0.5 \times 0.625$, placed near to the door, symmetrically as the
grey rectangles in the figure in~\eqref{eq:25}. We also
set\\
\begin{minipage}{0.4\linewidth}
  \centering
  \includegraphics[width=\linewidth,trim = 60 10 60
  20,clip=true]{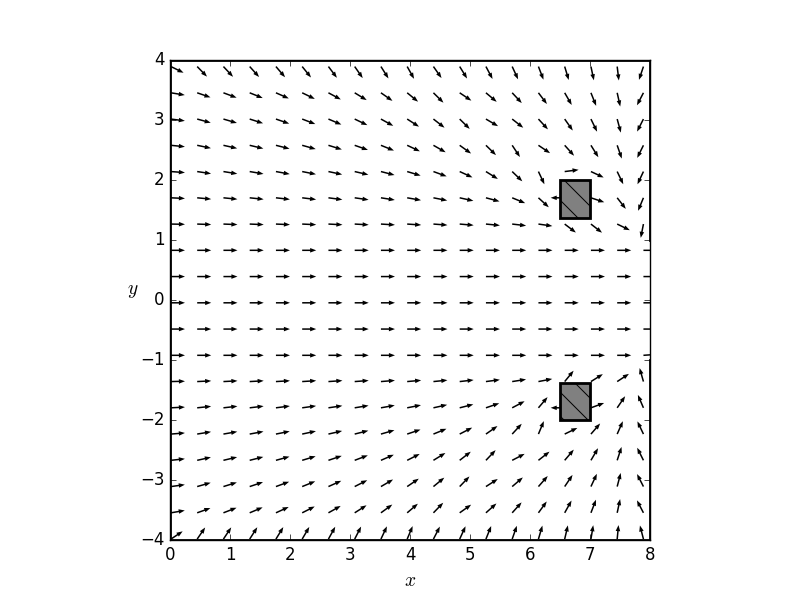}
  \label{fig:oneVF}
\end{minipage}%
\begin{minipage}{0.6\linewidth}
  \begin{equation}
    \label{eq:25}
    \begin{array}{@{}c@{}}
      \begin{array}{@{}r@{\,}c@{\,}l@{}}
        \Omega
        & =
        & [0,8] \times [-4, 4]
        \\
        D
        & =
        & \{8\} \times [-1,1]
        \\
        \tilde\eta_i (\xi)
        & =
        & \dfrac{315}{128 \, \pi \, {l_i}^{18}} \; ({l_i}^4 -
          \xi^4)^4 \; \caratt{[0,l_i]} (\xi)
        \\
        v (r)
        & =
        & 2 \; \min\left\{1, \max\left\{0,
          (1-(r/4)^3)^3\right\}\right\}
        \\
        w (x)
        & =
        & \mbox{ see the figure here on the left,}
      \end{array}
      \\
      l_1
      =
      0.625 \,,
      \quad
      l_2
      =
      1.5 \,,
      \quad
      \beta
      =
      0.6 \,.
    \end{array}
  \end{equation}
\end{minipage}\\
The vector field $w = w (x)$ is obtained as a sum of the unit vector
tangent to the geodesic from $x$ to the door and a discomfort vector
field with maximal intensity along the walls. The numerical
integration corresponding to a locally constant initial datum is
displayed in Figure~\ref{fig:one}.
\begin{figure}[!ht]
  \centering
  \includegraphics[width=0.3\linewidth,trim = 90 10 90
  20,clip=true]{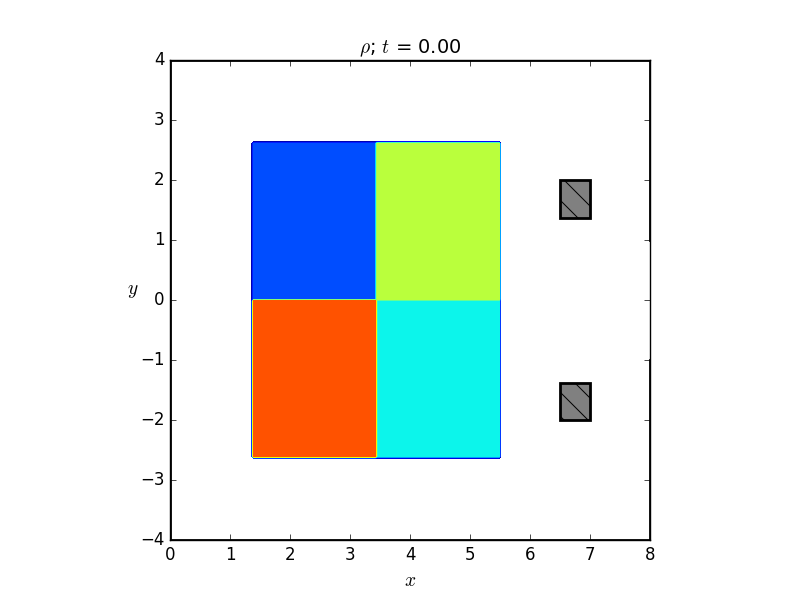}%
  \includegraphics[width=0.3\linewidth,trim = 90 10 90
  20,clip=true]{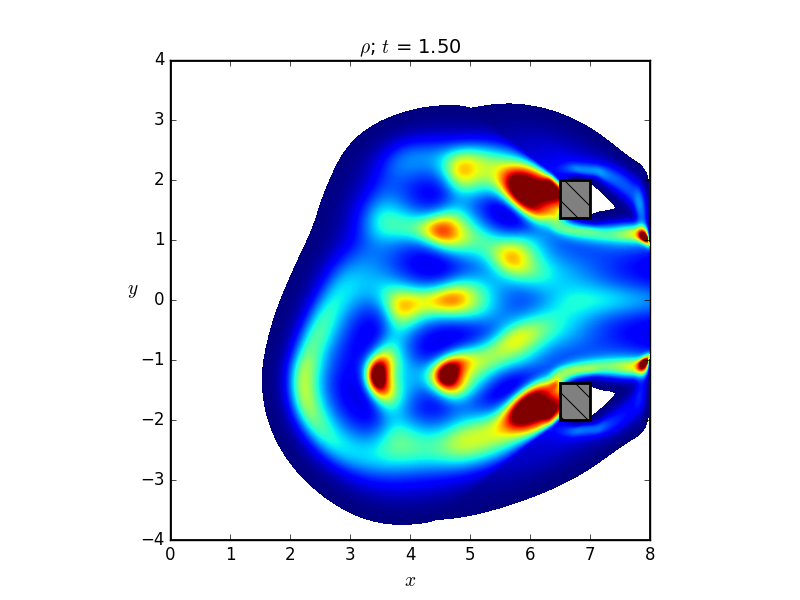}%
  \includegraphics[width=0.3\linewidth,trim = 90 10 90 20,clip=true]{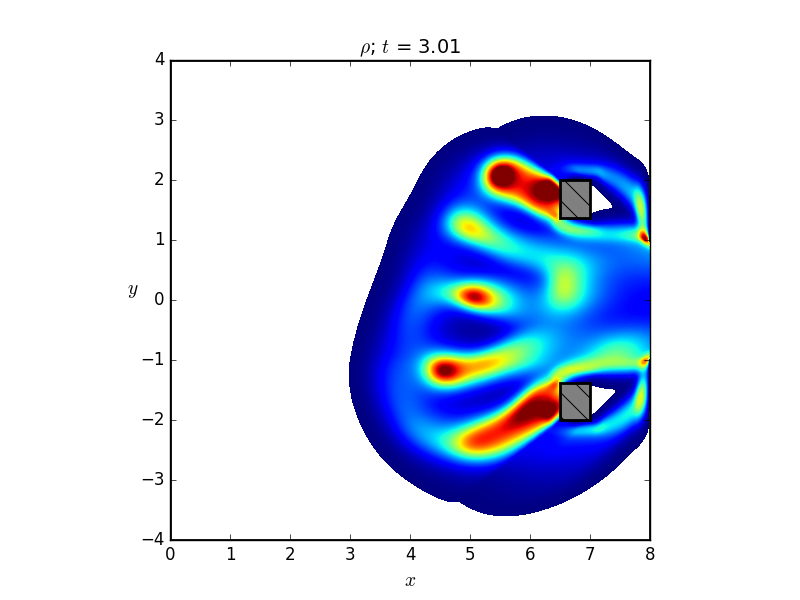}\\
  \includegraphics[width=0.3\linewidth,trim = 90 10 90
  20,clip=true]{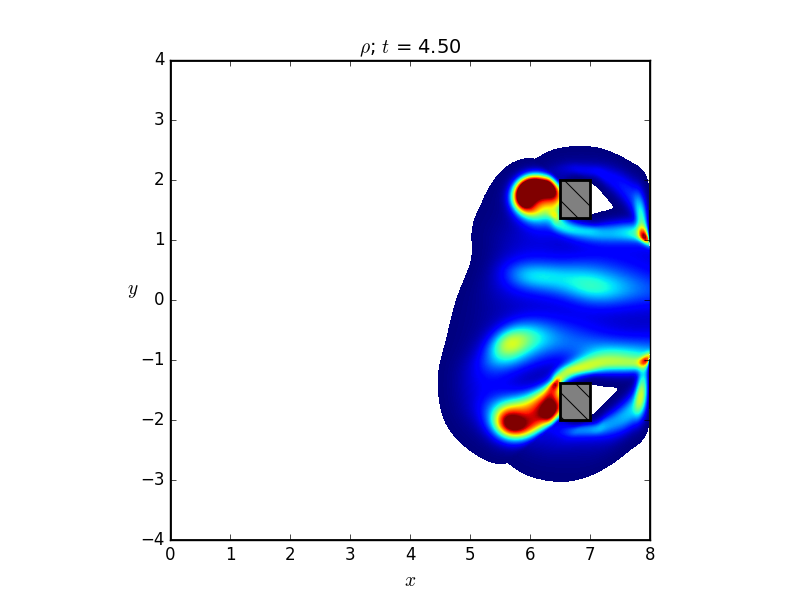}%
  \includegraphics[width=0.3\linewidth,trim = 90 10 90
  20,clip=true]{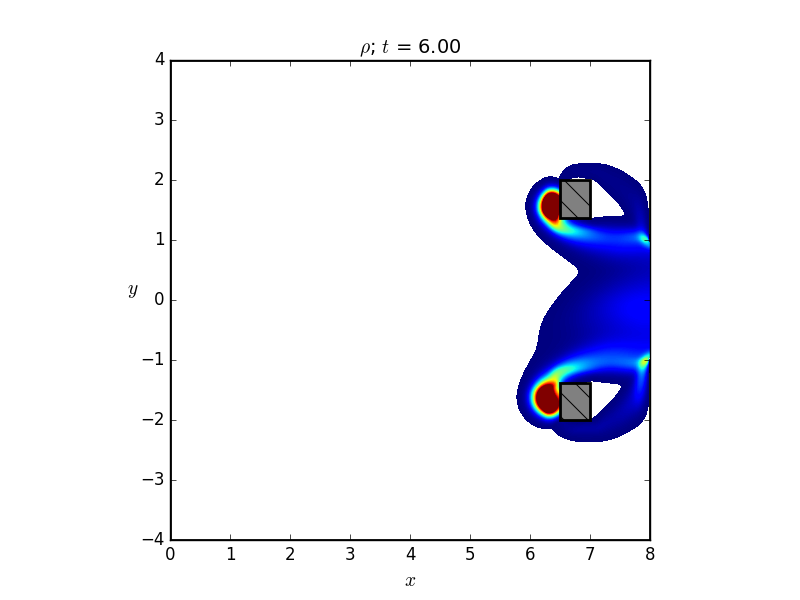}%
  \includegraphics[width=0.3\linewidth,trim = 90 10 90
  20,clip=true]{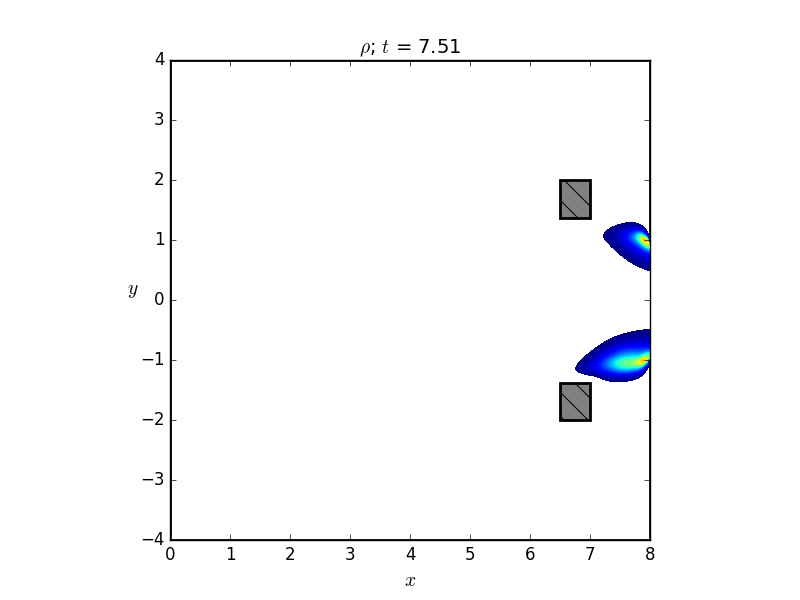}%
  \Caption{Plot of the level curves of the solution
    to~\eqref{eq:1}--\eqref{eq:23}--\eqref{eq:25}, computed
    numerically at the times $t=0,\;1.5,\; 3.0,\; 4.5,\; 6.0,\; 7.5$,
    corresponding to the initial data in the top left figure,
    consisting of $5$, $14$, $9$ and $20$ people (clockwise starting
    from the top left) in the $4$ quadrants displayed in the first
    figure. In this integration, the mesh sizes are
    $dx = dy = 0.03125$.}
  \label{fig:one}
\end{figure}
The solution displays a realistic behaviour, with queues being formed
behind the obstacles. For further details on the modelling and
numerical issues related
to~\eqref{eq:1}--\eqref{eq:23}--\eqref{eq:25}, we refer
to~\cite{prossimo}.

\subsection{Two Ways Movement along a Corridor}

The validity of Theorem~\ref{thm:fixpt} also for systems of equations
allows to consider the case of interacting populations. A case widely
considered in the literature, see for
instance~\cite{AggarwalColomboGoatin, ColomboGaravelloMercier,
  ColomboLecureuxPerDafermos, CristianiPiccoliTosin,
  HelbingEtAlii2001} and the references in~\cite{BellomoPiccoliTosin},
is that of two groups of pedestrians heading in opposite directions
along a corridor, say $\Omega$, with exits, say $D$, on each of its
sides. With the notation in Section~\ref{sec:MR}, this amounts to set
$N=2$, $n=2$ and to{\small
  \begin{equation}
    \label{eq:24}
    \!\!\!\!\!\!\!
    \left\{
      \begin{array}{@{}r@{}l@{}}
        \partial_t \rho^1
        {+}
        \div \!\! \left[
        \rho^1
        v^1 ((\rho^1{+}\rho^2) {\sOmega} \eta_1^{11}) \!
        \left(
        w^1 (x)
        -
        \dfrac{\beta_{11}
        \nabla (\rho^1 \sOmegaF\eta_2^{11})}{\sqrt{1+\norma{\nabla (\rho^1\sOmega\eta_2^{11})}^2}}
        -
        \dfrac{\beta_{12}
        \nabla (\rho^2 \sOmegaF\eta_2^{12})}{\sqrt{1+\norma{\nabla (\rho^2\sOmega\eta_2^{12})}^2}} \right)\!
        \right]_{\vphantom{|}}
        & =
          0,
        \\
        \partial_t \rho^2
        {+}
        \div \!\! \left[
        \rho^2
        v^2 ((\rho^1{+}\rho^2) {\sOmega} \eta_1^{22}) \!
        \left(
        w^2 (x)
        -
        \dfrac{\beta_{21}
        \nabla (\rho^1 \sOmegaF\eta_2^{21})}{\sqrt{1+\norma{\nabla (\rho^1\sOmega\eta_2^{21})}^2}}
        -
        \dfrac{\beta_{22}
        \nabla (\rho^2 \sOmegaF\eta_2^{22})}{\sqrt{1+\norma{\nabla (\rho^2\sOmega\eta_2^{22})}^2}} \right) \!
        \right]
        &  =
          0.
      \end{array}
    \right.
    \!\!\!\!\!\!\!\!\!\!\!\!
  \end{equation}}

\noindent The various terms in the expressions above are
straightforward extensions of their analogues in~\eqref{eq:23}. For
instance, in view of~\eqref{eq:*omega}--\eqref{eq:z},
$v^i = v^i \left((\rho^1+\rho^2)\sOmega \eta^{ii}_1\right)$ describes
how the maximal speed of the population $i$ at a point $x$ depends on
the average total density of $\rho^1+\rho^2$ in $\Omega$ around
$x$. Similarly, the term
$-\beta_{ij} \, \nabla (\rho^i \sOmega \eta^{ij}_2) \left/
  \sqrt{1+\norma{\rho^i \sOmega \eta^{ij}_2}^2}\right.$
describes the tendency of individuals of the $i$-th population to
avoid increasing values of the average density of the $j$-th
population, in the same spirit of the similar term in~\eqref{eq:23}.

\begin{lemma}
  \label{lem:TwoPop}
  Let $\Omega$ satisfy~$\boldsymbol{(\Omega)}$. Assume that
  $v^1, v^2 \in \C2 (\reali_+; \reali_+)$ and
  $w^1, w^2 \in \C2 (\Omega;\reali^2)$ are bounded in $\C2$. If
  moreover $\eta_1^{ii}$, $\eta^{ij}_2$ satisfy~$\boldsymbol({\eta)}$
  with $\eta^{ij}_2$ of class $\C3$ for $i,j=1,2$, then
  equation~\eqref{eq:24} fits into~\eqref{eq:1}, \emph{\textbf{(V)}}
  and~\emph{\textbf{(J)}} hold, so that Theorem~\ref{thm:fixpt}
  applies.
\end{lemma}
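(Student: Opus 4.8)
The plan is to verify, term by term, that equation~\eqref{eq:24} can be written in the form~\eqref{eq:1} with $n=2$, $N=2$, and a suitable choice of $m$ and of the operators $\mathcal{J}^i$, and then to check that the structural hypotheses~\textbf{(V)} and~\textbf{(J)} hold. This is essentially the same verification as in Lemma~\ref{lem:Corridor}, carried out twice (once for each equation) with the extra bookkeeping that $\mathcal{J}^i$ now sees both components $\rho^1,\rho^2$ and that the speed $v^i$ depends on the \emph{sum} $\rho^1+\rho^2$.

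First I would fix the value of $m$: each of the two velocity fields $V^i$ depends on the scalar $(\rho^1+\rho^2)\sOmega\eta_1^{ii}$ and on the two gradients $\nabla(\rho^1\sOmega\eta_2^{i1})\in\reali^2$ and $\nabla(\rho^2\sOmega\eta_2^{i2})\in\reali^2$, so one collects these into $A\in\reali^{5}$, i.e.\ $m=5$. Accordingly one sets, for $i=1,2$,
\begin{displaymath}
  \mathcal{J}^i (\rho^1,\rho^2)
  =
  \bigl(
  (\rho^1+\rho^2)\sOmega\eta_1^{ii},\;
  \nabla(\rho^1\sOmega\eta_2^{i1}),\;
  \nabla(\rho^2\sOmega\eta_2^{i2})
  \bigr),
\end{displaymath}
and defines $V^i(t,x,A) = v^i(A_1)\bigl(w^i(x) - \beta_{i1}A'/\sqrt{1+\norma{A'}^2} - \beta_{i2}A''/\sqrt{1+\norma{A''}^2}\bigr)$, where $A'=(A_2,A_3)$, $A''=(A_4,A_5)$. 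One then checks \textbf{(V)}: the map $s\mapsto s/\sqrt{1+\norma{s}^2}$ is smooth and bounded with bounded derivatives of all orders on $\reali^2$, $v^i\in\C2$ is bounded with bounded derivatives by hypothesis, and $w^i\in\C2(\Omega;\reali^2)$ is bounded in $\C2$; products and compositions of such maps give $V^i(t)\in\C2(\Omega\times\reali^5;\reali^2)$ with $\C2$ norm bounded uniformly (there is no $t$-dependence here, so continuity in $t$ is trivial). This is routine.

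The substantive point is \textbf{(J)}, and here Lemma~\ref{lem:Omega} together with the other properties of $\sOmega$ proved in Section~\ref{sec:Crowd} does the work. The zeroth- and first-order bounds~\textbf{(J.1)}--\textbf{(J.2)} for $r\mapsto r\sOmega\eta$ follow from $\norma{r\sOmega\eta}_{\L\infty}\le\O\norma{r}_{\L1}$ and the analogous estimates for $\nabla_x(r\sOmega\eta)$ and $\nabla_x^2(r\sOmega\eta)$, which hold because $z$ is bounded below away from zero on $\Omega$ (Lemma~\ref{lem:z}) and $\eta\in\C2$; differentiating $\nabla(\rho^j\sOmega\eta_2^{ij})$ once more costs one derivative of $\eta_2^{ij}$, which is why the statement asks for $\eta_2^{ij}\in\C3$ — this is exactly what makes $\nabla_x^2\mathcal{J}^i$ land in $\L\infty$ and gives the $\mathcal{K}$-type bound in the third line of~\textbf{(J.1)} and in the second line of~\textbf{(J.2)}. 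Linearity of $(\rho^1,\rho^2)\mapsto\rho^1+\rho^2$ and of $r\mapsto r\sOmega\eta$ (for fixed $\Omega$, $\eta$) makes the Lipschitz estimates~\textbf{(J.2)} immediate from the boundedness estimates. Since $\mathcal{J}^i$ is a finite tuple of such terms, one takes $K$ to be the max of the finitely many constants and $\mathcal{K}$ the max of the finitely many maps, and \textbf{(J)} follows.

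The main obstacle, such as it is, is purely organisational: keeping straight the four kernels $\eta_2^{ij}$ and the mixed dependence (speed on the sum, gradient-avoidance on the individual components) while confirming that nothing in the coupling breaks the per-equation structure that Definition~\ref{def:sol} requires — namely that, once $\rho=(\rho^1,\rho^2)$ is frozen inside $\mathcal{J}$, each $\rho^i$ solves a genuine scalar conservation law $\partial_t\rho^i+\div[\rho^i u^i(t,x)]=0$ with $u^i(t,x)=V^i(t,x,(\mathcal{J}^i\rho(t))(x))$. That is automatic from the way~\eqref{eq:1} is set up. So the proof reduces to: (i) identify $m$, $V^i$, $\mathcal{J}^i$ as above; (ii) cite Lemma~\ref{lem:Corridor}'s verification of \textbf{(V)} and \textbf{(J)} for the single-population building blocks, noting that the only new feature, the argument $\rho^1+\rho^2$ and the presence of two gradient terms, is handled by linearity and by enlarging $m$; (iii) invoke Theorem~\ref{thm:fixpt}. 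I would write it tersely, referring back to the proof of Lemma~\ref{lem:Corridor} for the estimates that are literally repeated.
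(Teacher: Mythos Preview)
Your proposal is correct and follows essentially the same approach as the paper: identify $V^i$ and $\mathcal{J}^i$, then verify \textbf{(V)} and \textbf{(J)} by referring back to the computations in the proof of Lemma~\ref{lem:Corridor}. The only difference is bookkeeping: the paper packs all non local quantities into a single operator $\mathcal{J}=\mathcal{J}^1=\mathcal{J}^2$ with $m=10$, letting $V^1$ and $V^2$ pick out different components of $A\in\reali^{10}$, whereas you keep $\mathcal{J}^1$ and $\mathcal{J}^2$ distinct with $m=5$; since assumption~\textbf{(J)} allows the $\mathcal{J}^i$ to differ, your packaging is equally valid (and slightly more economical).
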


\noindent The proof is deferred to Section~\ref{sec:Crowd}.

\smallskip

A qualitative picture of the possible solutions
to~\eqref{eq:1}--\eqref{eq:24} is obtained through the following numerical
integration, corresponding to the choices\\
\begin{minipage}{0.4\linewidth}
  \centering
  \includegraphics[width=0.9\linewidth,trim = 30 10 50
  20,clip=true]{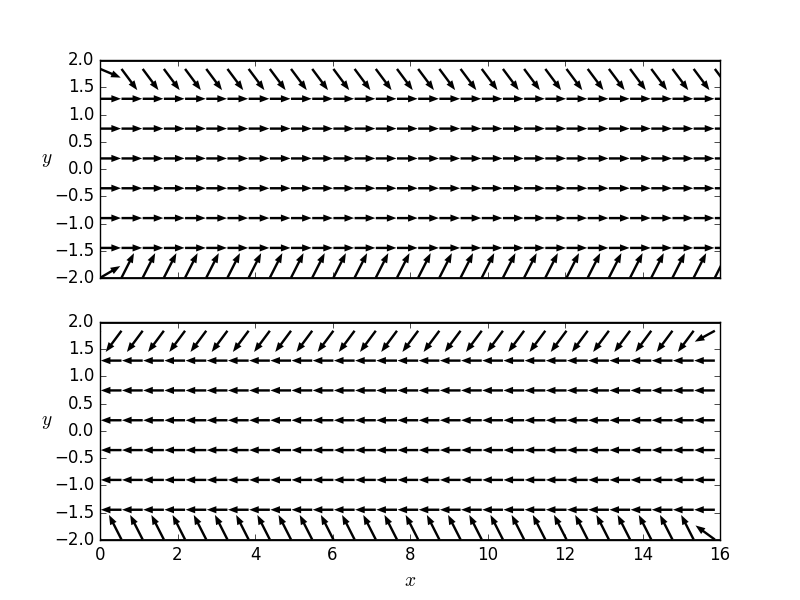}
  \label{fig:twoVF}
\end{minipage}%
\begin{minipage}{0.6\linewidth}
  \begin{equation}
    \label{eq:20}
    \!\!
    \begin{array}{@{}c@{}}
      \Omega = [0,16] \times [-2,2]
      \,,\qquad
      D = \{0,16\} \times [-2, 2]
      \\
      \begin{array}{@{}r@{\,}c@{\,}l@{}}
        \tilde\eta^{ij}_l (\xi)
        & =
        & \dfrac{315}{128 \, \pi \, {(l_i^{ij})}^{2}} \; (1 -
          (\xi/l_i^{ij})^4)^4 \; \caratt{[0,l_i^{ij}]} (\xi)
        \\
        v^1 (r)
        & =
        & \min\left\{1, \max\left\{0,
          (1-(r/4.5)^3)^3\right\}\right\}
        \\
        v^2 (r)
        & =
        & 1.5 \; \min\left\{1, \max\left\{0,
          (1-(r/4.5)^3)^3\right\}\right\}
        \\
        w^1 (x)
        & =
        & \mbox{ see the figure here on the left, top}
        \\
        w^2 (x)
        & =
        & \mbox{ see the figure here on the left, bottom}
      \end{array}
      \\
      l^{ii}_1
      =
      0.1875 \,,
      \quad
      l^{ij}_2
      =
      0.5 \,,
      \quad
      \beta_{ii}
      =
      0.2 \,,
      \quad
      \beta_{ij}
      =
      0.5 \,.
    \end{array}
  \end{equation}
\end{minipage}
\\
for $i,j =1,2$, see Figure~\ref{fig:two}. Note the complex dynamics
arising due to the formation of regions with high density.
\begin{figure}[!h]
  \centering
  \includegraphics[width=0.33\linewidth,trim = 25 10 25
  25,clip=true]{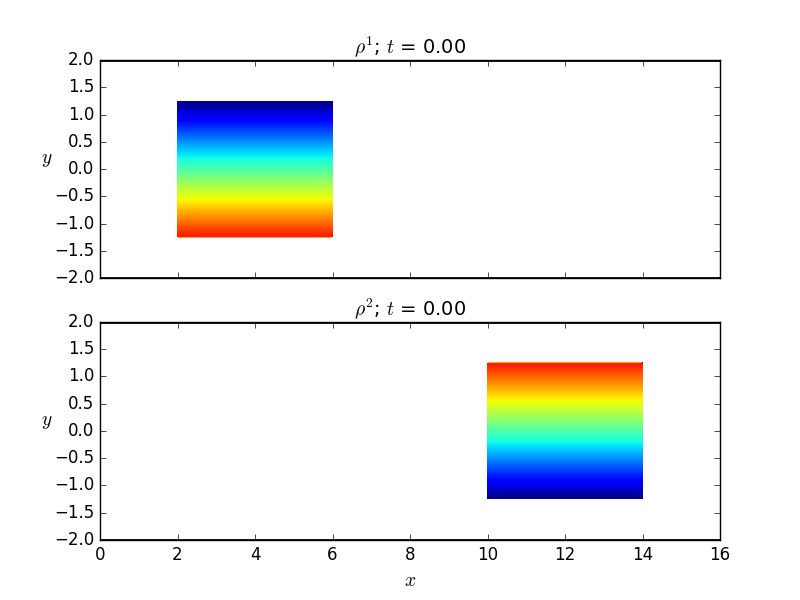}%
  \includegraphics[width=0.33\linewidth,trim = 25 10 25
  25,clip=true]{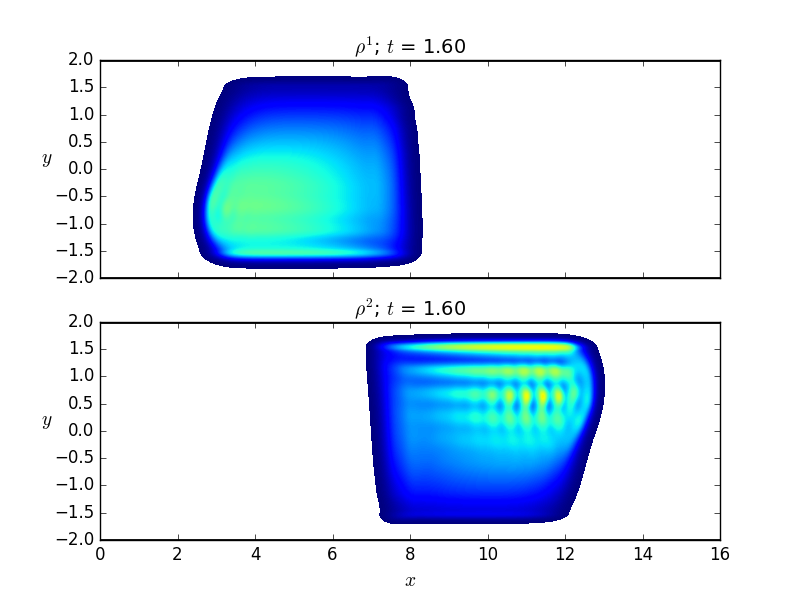}%
  \includegraphics[width=0.33\linewidth,trim = 25 10 25 25,clip=true]{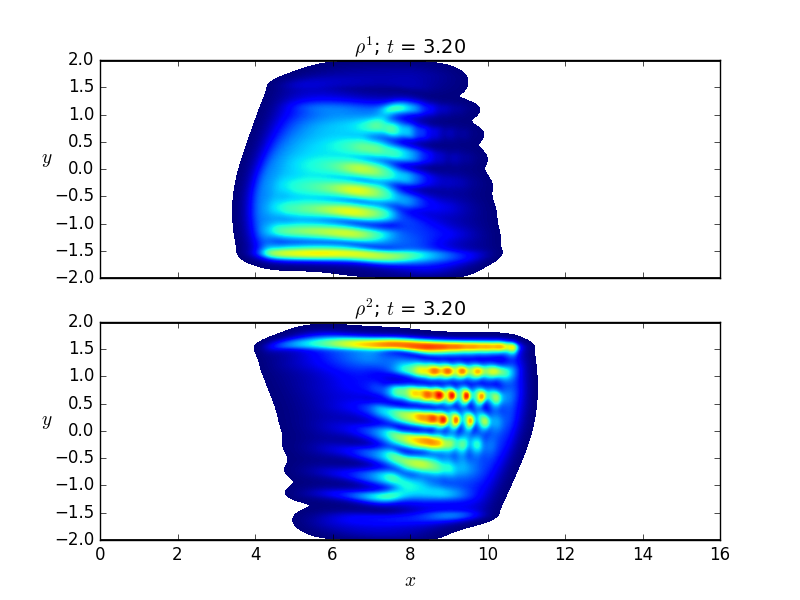}\\\  \\
  \includegraphics[width=0.33\linewidth,trim = 25 10 25
  25,clip=true]{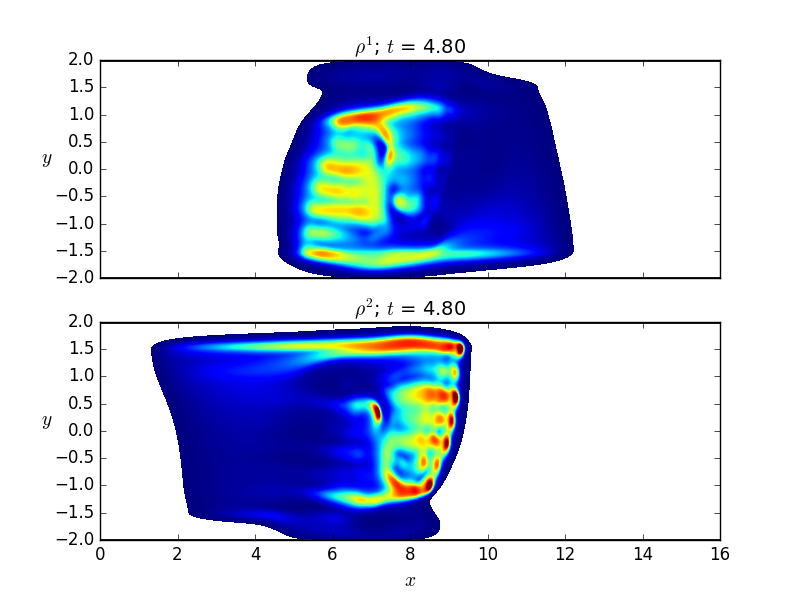}%
  \includegraphics[width=0.33\linewidth,trim = 25 10 25
  25,clip=true]{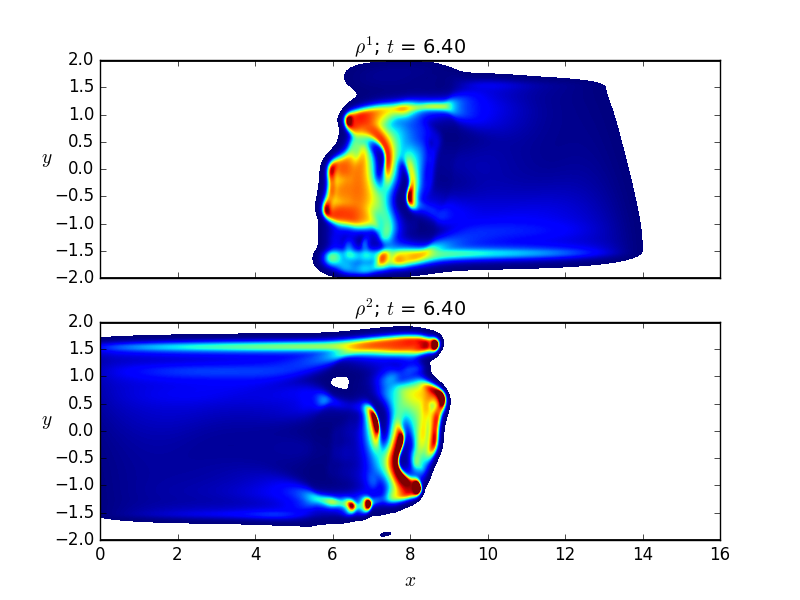}%
  \includegraphics[width=0.33\linewidth,trim = 25 10 25
  25,clip=true]{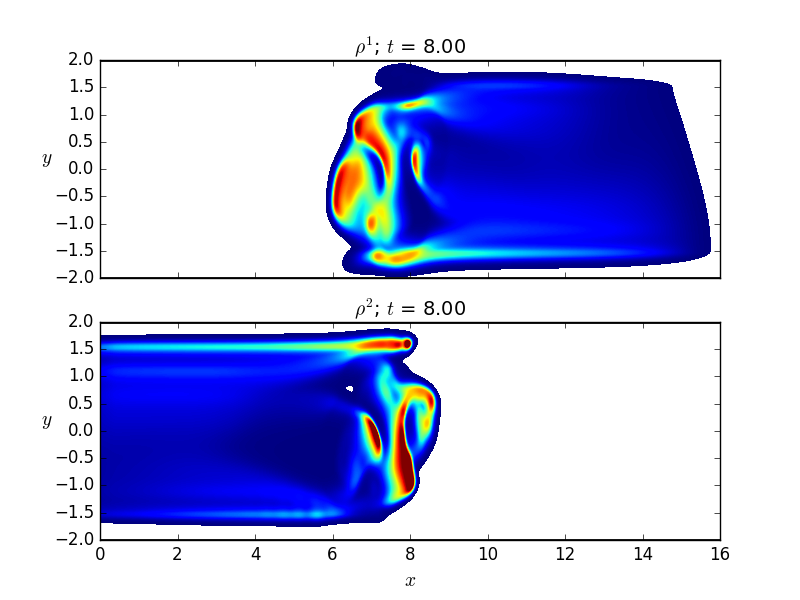}%
  \Caption{Plot of the level curves of the solution
    to~\eqref{eq:1}--\eqref{eq:24}--\eqref{eq:20}, computed
    numerically at the times $t=0,\;1.6,\; 3.2,\; 4.8,\; 6.4,\; 8.0$.
    First and third rows refer to $\rho^1$, while the second and
    fourth one to $\rho^2$. The initial datum varies linearly along
    the $y$ coordinate between $0$ and $4$. In this integration, the
    mesh sizes are $dx = dy = 0.015625$.}
  \label{fig:two}
\end{figure}
This description is consistent with the typical \emph{self
  organization} of crowd motions, see~\cite{Helbing2012,
  HelbingEtAlii2001}: queues consisting of pedestrian walking in the
same direction are formed, in particular at time $3.20$.

\section{Proofs Related to Section~\ref{sec:MR}}
\label{sec:TD}

We recall the basic properties of the following (local) IBVP
\begin{equation}
  \label{eq:2}
  \left\{
    \begin{array}{l@{\qquad}r@{\,}c@{\,}l}
      \partial_t r
      +
      \div \left[r \; u(t, x)\right]
      =
      0
      & (t,x)
      & \in
      & I \times \Omega
      \\
      r (t, \xi) = 0
      & (t, \xi)
      & \in
      & I \times \partial \Omega
      \\
      r (0,x) = r_o (x)
      & x
      & \in
      &\Omega \,,
    \end{array}
  \right.
\end{equation}
where we assume that
\begin{description}
\item[(u)] $u \colon I \times \Omega \to \reali^N$ is such that
  $u \in (\C0 \cap \L\infty) (I \times \Omega;\reali^N)$, for all
  $t \in I$, $u (t) \in \C2 (\Omega;\reali^N)$ and
  $\norma{u (t)}_{\C2 (\Omega;\reali^N)}$ is uniformly bounded in $I$.
\end{description}

\noindent We refer to~\cite{Elena1} for a comparison among various
definitions of solutions to~\eqref{eq:2}. Recall the concept of
\emph{RE--solutions}, which first requires an extension
of~\cite[Chapter~2, Definition~7.1]{MalekEtAlBook}. Note that,
although the equation in~\eqref{eq:2} is linear, jump discontinuities
may well arise between the solution and the datum assigned along the
boundary.

\begin{definition}[{\cite[Definition~2]{Vovelle}}]
  \label{def:beef}
  The pair
  $(H,Q) \in \C2 (\reali^2;\reali) \times \C2 (I \times \overline
  \Omega \times \reali^2; \reali^N)$ is called a \emph{boundary
    entropy-entropy flux pair} for the flux $f (t,x,r) = r \, u (t,x)$
  if:
  \begin{enumerate}[i)]
  \item for all $w \in \reali$ the function $z \mapsto H (z,w)$ is
    convex;
  \item for all $t \in I $, $x\in\overline\Omega$ and
    $z,\, w \in \reali$,
    $\partial_zQ (t,x,z,w) = \partial_z H (z,w) \, u (t,x)$;
  \item for all $t \in I$, $x\in\overline\Omega$ and $w \in \reali$,
    $H (w,w)= 0$, $Q (t,x,w,w) = 0$ and $\partial_z H (w,w) = 0$.
  \end{enumerate}
\end{definition}

\noindent Note that if $H$ is as above, then $H \geq 0$.

\begin{definition}[{\cite[Definition~3.3]{Elena1}}]
  \label{def:besol}
  A \emph{Regular Entropy solution} (RE-solution) to the
  initial--boundary value problem~\eqref{eq:2} on $I$ is a map
  $r \in \L\infty\left(I \times \Omega; \reali \right)$ such that for
  any boundary entropy-entropy flux pair $(H, Q)$, for any
  $k \in \reali$ and for any test function
  $\phi \in \Cc1 (\reali \times \reali^N; \reali_+)$
  \begin{equation}
    \label{eq:be}
    \begin{aligned}
      & \int_I \int_{\Omega} \left[ H \left(r (t,x), k
        \right) \partial_t \phi (t,x) + Q \left(t,x,r (t,x), k \right)
        \cdot \nabla \phi (t,x)\right] \d{x} \d{t}
      \\
      & - \int_I \int_{\Omega}
      \partial_1 H \left(r (t,x), k \right) \, r (t,x) \, \div u (t,x)
      \, \phi (t,x) \d{x} \d{t}
      \\
      & + \int_I \int_{\Omega} \div Q \left(t,x,r (t,x), k \right) \,
      \phi (t,x)\d{x} \d{t}
      \\
      & + \int_{\Omega} H\left(r_o (x), k \right) \, \phi (0,x) \d{x}
      % - \int_{\Omega}H\left(r (T,x), k \right) \, \phi (T,x)
      % \d{x}
      % \\
      % &
      + \norma{u}_{\L\infty (I \times \Omega; \reali^N)} \int_I
      \int_{\partial \Omega} H \left(0, k \right) \, \phi (t,\xi)
      \d\xi \d{t} \geq 0.
    \end{aligned}
  \end{equation}
\end{definition}

\begin{lemma}
  \label{lem:equiv}
  Let~$\boldsymbol{(\Omega)}$ and~\emph{\textbf{(u)}} hold. Assume
  $r_o \in (\L\infty \cap \BV) (\Omega;\reali)$. For
  $(t_o, x_o) \in I\times\Omega$, introduce the map
  \begin{equation}
    \label{eq:5}
    \begin{array}{rccl}
      X (\, \cdot \; ; t_o, x_o) \colon
      & I (t_o, x_o)
      & \to
      & \Omega
      \\
      & t
      & \to
      & X (t; t_o, x_o)
    \end{array}
    \quad \mbox{ solving } \quad
    \left\{
      \begin{array}{l}
        \dot x = u (t,x)
        \\
        x (t_o)  = x_o \,,
      \end{array}
    \right.
  \end{equation}
  $I (t_o,x_o)$ being the maximal interval where a solution to the
  Cauchy problem above is defined. The map $r$ defined by
  \begin{equation}
    \label{eq:6}
    r (t,x)
    =
    \left\{
      \begin{array}{l@{\qquad}r@{\,}c@{\,}l@{}}
        \displaystyle
        r_o\left(X (0; t,x)\right)
        \exp \left(
        - \int_0^t
        \div u\left(\tau, X (\tau; t,x)\right) \d\tau
        \right)
        & x
        & \in
        & X (t; 0, \Omega)
        \\
        0
        & x
        & \in
        & X (t; \left[0,t\right[, \partial\Omega)
      \end{array}
    \right.
  \end{equation}
  is a RE--solution to~\eqref{eq:2}. Moreover,
  $r \colon [0,T] \to (\L\infty \cap \BV) (\Omega;\reali)$ is
  $\L1$--continuous.
\end{lemma}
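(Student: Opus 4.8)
The strategy is the classical method of characteristics for the linear transport equation with a divergence term, combined with the known characterisation of RE--solutions. Since $u$ satisfies \textbf{(u)}, the vector field $u(t,\cdot)$ is $\C2$ and globally Lipschitz in $x$ uniformly in $t$, so the Cauchy problem in~\eqref{eq:5} has a unique solution, and the flow $x_o \mapsto X(t;t_o,x_o)$ is a $\C1$ diffeomorphism onto its image wherever it is defined. The first step is to make precise the geometry behind~\eqref{eq:6}: for each $t$, the set $X(t;0,\Omega)$ consists of those points reached at time $t$ by a characteristic that started inside $\Omega$ at time $0$ and has not yet hit $\partial\Omega$; its complement in $\Omega$ is exactly $X(t;[0,t[,\partial\Omega)$, the points lying on characteristics that entered through the boundary at some earlier time. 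One checks these two sets are disjoint and cover $\Omega$, using uniqueness of characteristics backward in time and the fact that, along any characteristic, once the trajectory is on $\partial\Omega$ with $u$ pointing inward it enters $\Omega$, while the zero boundary datum forces the solution to vanish on the inward-pointing part of the boundary.

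The second step is to verify that the $r$ defined by~\eqref{eq:6} is a genuine RE--solution in the sense of Definition~\ref{def:besol}. The natural route is to observe that, away from the boundary, $r$ is the unique smooth (or Lipschitz, depending on $r_o$) solution of $\partial_t r + \div(r\,u) = 0$ obtained by transporting $r_o$ along characteristics with the Jacobian/divergence correction factor $\exp(-\int_0^t \div u(\tau,X(\tau;t,x))\,\d\tau)$; this is a direct computation differentiating along characteristics. For $r_o \in (\L\infty\cap\BV)(\Omega;\reali)$ one passes from the smooth case to the general one by approximating $r_o$ in $\L1$ by smooth data and using stability of the characteristic formula. Then one plugs this $r$ into the entropy inequality~\eqref{eq:be}: for the interior contribution, since $r$ is a classical solution of the conservation law there, the bulk entropy terms combine via properties i)--iii) of Definition~\ref{def:beef} to give an identity (equality, not just inequality); the only place a genuine inequality can appear is at the boundary, and there the boundary term $\norma{u}_{\L\infty}\int_I\int_{\partial\Omega} H(0,k)\,\phi\,\d\xi\,\d t$ with $H(0,k)\geq 0$ has the right sign to absorb the boundary trace contribution, precisely because $r$ has trace $0$ on the inflow part of $\partial\Omega$ and the outflow part contributes with a favourable sign. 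This is the standard Bardos--Leroux--Nédélec / Otto argument adapted to the linear flux, and I would cite \cite{Vovelle, Elena1} for the parts that transfer verbatim.

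The third step is the $\L1$--continuity in time, i.e.\ that $t \mapsto r(t)$ is continuous from $[0,T]$ into $(\L\infty\cap\BV)(\Omega;\reali)$ — more precisely $\L1$--continuous with values in that space. Here I would first establish the $\L\infty$ and $\BV$ bounds: the $\L\infty$ bound follows immediately from~\eqref{eq:6} since $|r(t,x)| \leq \norma{r_o}_{\L\infty}\exp(t\norma{\div u}_{\L\infty})$, and the $\BV$ bound is obtained by differentiating the characteristic formula in $x$, controlling $\nabla_x X(0;t,x)$ and $\nabla_x \int_0^t \div u\,\d\tau$ through Gronwall estimates using the $\C2$ bound on $u$, plus accounting for the new boundary of the set $X(t;0,\Omega)$ — the jump of $r$ across $\partial X(t;0,\Omega)$ is controlled by the trace of $r$ there, which is bounded by $\norma{r_o}_{\L\infty}\exp(\cdots)$ times the surface measure swept, itself bounded via $\norma{u}_{\L\infty}$ and $|\partial\Omega|$. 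Once these uniform bounds hold, $\L1$--continuity follows from the Lipschitz-in-time estimate $\norma{r(t)-r(s)}_{\L1} \leq C\,|t-s|$, which one gets either from the weak formulation of the conservation law tested against a fixed function (so $\partial_t \int r\phi$ is bounded by $\norma{r}_{\L\infty}\norma{u}_{\L\infty}\norma{\nabla\phi}_{\L\infty}$) combined with the $\BV$ bound in space, or directly by comparing the two characteristic representations and estimating how much mass crosses the moving interface in time $|t-s|$.

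The main obstacle, I expect, is the boundary bookkeeping: carefully identifying the set $X(t;0,\Omega)$ and its complement, checking that the formula~\eqref{eq:6} is consistent (that the two cases do not overlap and that $r$ is well-defined as an $\L\infty$ function), and then controlling the total variation contributed by the jump of $r$ across the free boundary $\partial X(t;0,\Omega)$ inside $\Omega$. The interior characteristic computation and the verification of the entropy inequality away from the boundary are essentially routine and largely covered by the cited references; it is the interaction with $\partial\Omega$ — where the $\C2$ regularity of $\partial\Omega$ from \textbf{($\boldsymbol\Omega$)} and the interior sphere condition get used — that requires genuine care, and it is also what makes the $\BV$ estimate (and hence the $\L1$--continuity) more delicate than in the Cauchy problem on all of $\reali^N$.
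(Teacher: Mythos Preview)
Your proposal is correct in outline and would work, but it is considerably more laborious than the paper's argument, and the extra work stems from missing one simplifying trick.

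The paper also proceeds by approximation of $r_o$, but the key point is the \emph{choice} of approximants: it takes a smooth $\BV$-approximation $\tilde r_h$ of $r_o$ and multiplies it by a cutoff $(1-\Phi_h)$ that vanishes on a $1/h$-neighbourhood of $\partial\Omega$. The resulting $r_o^h \in \C3(\Omega)$ satisfies $r_o^h = 0$ on $\partial\Omega$, so the corresponding $r_h$ defined by the characteristic formula~\eqref{eq:6} is a genuine strong (classical) solution of the IBVP~\eqref{eq:2} all the way up to the boundary, with the boundary condition satisfied pointwise. One then invokes the cited result (\cite[Proposition~6.2]{Elena1}) that a strong solution is automatically a RE--solution, and passes to the $\L1$ limit using the stability of Definition~\ref{def:besol} under $\L1$ convergence. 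This completely sidesteps what you flag as the ``main obstacle'': there is no need to analyse the free interface $\partial X(t;0,\Omega)$, no need to check the sign of the boundary-trace contribution against the $H(0,k)$ term, and no Bardos--Leroux--N\'ed\'elec argument to run, because for the approximants the boundary datum is attained classically and the limit inherits the RE property.

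Similarly, the $\L1$--continuity in time is obtained cheaply: each $r_h$ is smooth in $t$, and $r_h \to r$ in $\L1(\Omega)$ uniformly in $t$ (this follows from the characteristic formula once $r_o^h \to r_o$ in $\L1$, since the Jacobian factor is bounded uniformly in $t$ on $[0,T]$). The detailed $\BV$ bound that you propose to establish here is deferred by the paper to the subsequent lemma; for the present statement only $\L1$--continuity is needed, and the uniform-limit argument suffices.

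In short: your direct verification of~\eqref{eq:be} with careful boundary bookkeeping is a valid route, but the paper buys simplicity by choosing approximants that make the boundary trivial and then quoting two soft facts (strong $\Rightarrow$ RE, and $\L1$-stability of RE).
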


\begin{proof}
  We first regularise the initial datum,
  using~\cite[Theorem~1]{AnzellottiGiaquinta}, see
  also~\cite[Formula~(1.8)~and Theorem~1.17]{GiustiBook}: for
  $h \in \naturali \setminus \{0\}$, there exists a sequence
  $\tilde r_h \in \C\infty (\Omega;\reali)$ such that
  \begin{displaymath}
    \lim_{h \to +\infty}
    \norma{\tilde r_h- r_o}_{\L1 (\Omega;\reali)}=0,
    \qquad
    \norma{\tilde r_h}_{\L\infty (\Omega; \reali)} \leq
    \norma{r_o}_{\L\infty (\Omega; \reali)}
    \quad \mbox{and} \quad
    \lim_{h \to \infty}\tv (\tilde r_h) = \tv (r_o).
  \end{displaymath}
  Let $\Phi_h \in \Cc3 (\reali^N; [0,1])$ be such that
  $\Phi_h (\xi) = 1$ for all $\xi \in \partial\Omega$, $\Phi_h (x)=0$
  for all $x\in\Omega$ with $B (x, 1/h) \subseteq \Omega$ and
  $\norma{\nabla \Phi_h}_{\L\infty (\Omega;\reali^N)} \leq 2h$. Let
  \begin{equation}
    \label{eq:approxdi}
    r_o^h (x) = \left(1 - \Phi_h (x)\right) \, \tilde r_h (x)
    \quad \mbox{ for all } x\in \overline{\Omega},
  \end{equation}
  so that $r_o^h \in \C3 (\Omega;\reali)$.  By construction,
  $\lim_{h \to +\infty}\norma{r_o^h - r_o}_{\L1 (\Omega;\reali)} = 0$.
  Moreover, $r_o^h (\xi) = 0$ for all $\xi \in \partial\Omega$ and
  $h \in \naturali \setminus\{0\}$, and the following uniform bounds
  hold
  \begin{align}
    \label{eq:12}
    \norma{r_o^h}_{\L\infty (\Omega; \reali)} \leq \
    & \norma{r_o}_{\L\infty (\Omega; \reali)},
    \\
    \label{eq:13}
    \tv (r_o^h) \leq \
    & \mathcal{O} (1) \, \norma{r_o}_{\L\infty (\Omega; \reali)}
      +\tv (r_o).
  \end{align}
  Using the sequence $r_o^h$, define the corresponding
  sequence $r_h$ according to~\eqref{eq:6}. Obviously, each $r_h$ is a
  strong solution to~\eqref{eq:2}.  By~\cite[Proposition~6.2]{Elena1},
  each $r_h$ is also a RE--solution to~\eqref{eq:2}

  Let $r$ be defined as in~\eqref{eq:6}. It is clear that $r_h$
  converges to $r$ in $\L1$. Since Definition~\ref{def:besol} is
  stable under $\L1$ convergence, see~\cite{MalekEtAlBook, Elena1}, we
  obtain that $r$ is a RE--solution to~\eqref{eq:2}.

  The continuity in time of $r$ follows from the continuity in time of
  $r_h$ and the fact that $r$ is the uniform limit of $r_h$.
\end{proof}

The following Lemma extends to the case of the IBVP the results
in~\cite[Lemma~5.1, Corollary~5.2 and
Lemma~5.3]{ColomboHertyMercier}. Note that, due to the presence of the
boundary, this extension needs some care, see Remark~\ref{rem:note}.

\begin{lemma}
  \label{lem:properties}
  Let~$\boldsymbol{(\Omega)}$ and~\emph{\textbf{(u)}} hold. Assume
  $r_o \in (\L\infty \cap \BV) (\Omega;\reali)$. Then, the solution
  $r$ to~\eqref{eq:2} is such that
  $r \in \C{0,1} (I;\L1 (\Omega;\reali))$ and for all $t, \, s \in I$,
  \begin{align}
    \label{eq:rL1}
    \norma{r (t)}_{\L1 (\Omega;\reali)} \leq \
    & \norma{r_o}_{\L1 (\Omega;\reali)}
    \\
    \label{eq:rLinf}
    \norma{r (t)}_{\L\infty (\Omega;\reali)} \leq \
    & \norma{r_o}_{\L\infty (\Omega;\reali)} \,
      e^{\norma{\div \,u}_{\L1 ([0,t];\L\infty (\Omega;\reali))}}
    \\
    \label{eq:rTV}
    \tv\left(r (t)\right)
    \leq \
    & \exp\left(\int_0^t \norma{\nabla u \left(\tau\right)}_{\L\infty
      (\Omega;\reali^{N \times N})} \d\tau\right)
      \biggl( \mathcal{O} (1) \, \norma{r_o}_{\L\infty (\Omega;\reali)} \biggr.
    \\
    \nonumber
    &  \biggl. +
      \tv(r_o)+ \norma{r_o}_{\L1 (\Omega;\reali)} \,
      \int_0^t \norma{\nabla \div u (\tau)}_{\L\infty
      (\Omega;\reali^N)} \d\tau \biggr),
    \\
    \label{eq:lipT}
    \norma{r (t) - r (s)}_{\L1 (\Omega;\reali)} \leq \
    & \tv\left(r\left(\max\{t,s\}\right)\right) \, \modulo{t-s} \,.
  \end{align}
  If also $\tilde r_o \in (\L\infty \cap \BV) (\Omega;\reali)$ and
  $\tilde r$ is the corresponding solution to~\eqref{eq:2}, for all
  $t \in I$,
  \begin{equation}
    \label{eq:dipLip}
    \norma{r (t) - \tilde r (t)}_{\L1 (\Omega;\reali)}
    \leq
    \norma{r_o - \tilde r_o}_{\L1 (\Omega;\reali)} \,.
  \end{equation}
\end{lemma}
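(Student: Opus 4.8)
The plan is to exploit the explicit representation formula~\eqref{eq:6} from Lemma~\ref{lem:equiv}, together with the regularised solutions $r_h$ built from the smooth data $r_o^h$ in~\eqref{eq:approxdi}. Since each $r_h$ is a strong (classical) solution, all the estimates can be proved first for $r_h$ by direct computation along characteristics, and then passed to the limit $h \to +\infty$, using $r_h \to r$ in $\L1$ and the uniform bounds~\eqref{eq:12}--\eqref{eq:13}. The key geometric fact, guaranteed by~$\boldsymbol{(\Omega)}$ and the characteristic ODE~\eqref{eq:5}, is that characteristics starting in $\Omega$ either stay in $\Omega$ or leave it through $\partial\Omega$ and never come back (the boundary datum being $0$): so $r_h(t,\cdot)$ is supported in $X(t;0,\Omega)$ and vanishes on the complementary "exited" region. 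This is what makes the total variation estimate qualitatively different from the whole-space case, cf.~Remark~\ref{rem:note}.

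First I would prove~\eqref{eq:rL1} and~\eqref{eq:rLinf}. For the $\L\infty$ bound, the representation~\eqref{eq:6} gives pointwise $\modulo{r_h(t,x)} \le \norma{r_o^h}_{\L\infty} \exp\bigl(\int_0^t \norma{\div u(\tau)}_{\L\infty}\d\tau\bigr)$ on $X(t;0,\Omega)$ and $0$ elsewhere; combined with~\eqref{eq:12} and $L^1$ convergence (passing to a subsequence converging a.e.) this yields~\eqref{eq:rLinf}. For the $\L1$ bound, I would integrate the conservation law over $\Omega$: the flux term contributes only an outgoing boundary integral (sign definite because mass only exits, using the zero boundary datum and the structure of $\div[r\,u]$), so $\norma{r_h(t)}_{\L1}$ is non-increasing in $t$, hence~\eqref{eq:rL1}. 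Alternatively, change variables $y = X(0;t,x)$ in the integral of $\modulo{r_h(t,x)}$ over $X(t;0,\Omega)$; the Jacobian exactly cancels the exponential factor, leaving $\int_{\Omega} \modulo{r_o^h(y)}\d y$ restricted to the set whose forward characteristic is still in $\Omega$ at time $t$, which is $\le \norma{r_o^h}_{\L1}$.

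Next, the total variation estimate~\eqref{eq:rTV}. This is the main obstacle. For the smooth solution $r_h$, differentiating the equation $\partial_t r_h + u\cdot\nabla r_h + r_h\,\div u = 0$ in $x$ gives, for $p = \nabla r_h$, a transport equation $\partial_t p + (u\cdot\nabla)p + (\nabla u)^T p + p\,\div u + r_h\,\nabla\div u = 0$, whence along characteristics $\frac{d}{dt}\modulo{p} \le \bigl(\norma{\nabla u}_{\L\infty} + \norma{\div u}_{\L\infty}\bigr)\modulo{p} + \norma{\nabla\div u}_{\L\infty}\modulo{r_h}$. Integrating $\modulo{\nabla r_h(t,x)}$ over $X(t;0,\Omega)$, changing variables as above (the $\div u$ term in the exponent cancels against the Jacobian), and using the $\L1$ bound on $r_h$ gives the "interior" part of~\eqref{eq:rTV} with the factor $\exp\bigl(\int_0^t \norma{\nabla u(\tau)}_{\L\infty}\d\tau\bigr)$ and the $\norma{r_o}_{\L1}\int_0^t\norma{\nabla\div u(\tau)}_{\L\infty}\d\tau$ term; the initial contribution is controlled by $\tv(r_o^h) \le \mathcal{O}(1)\norma{r_o}_{\L\infty} + \tv(r_o)$ from~\eqref{eq:13}. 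The subtle point is the jump of $r_h(t,\cdot)$ across the free boundary $\partial X(t;0,\Omega)$: the trace of the interior solution there need not vanish, so this interface carries a singular part of $Dr_h(t)$. Its size is the product of the trace of $\modulo{r_h}$ (bounded by the $\L\infty$ estimate, times the surface measure of the interface) — this is precisely where the $\mathcal{O}(1)\,\norma{r_o}_{\L\infty}$ term comes from, using that $\partial\Omega$ is $\C2$ and the flow is bi-Lipschitz on $[0,t]$ so the interface has finite, controlled $(N{-}1)$-measure; I would make this rigorous on the $r_h$ level where the free boundary is a smooth hypersurface and pass to the limit. Then~\eqref{eq:lipT} follows from the conservation law: $\norma{r(t) - r(s)}_{\L1} = \norma{\int_s^t \partial_\tau r\,\d\tau}_{\L1} = \norma{\int_s^t \div[r\,u]\,\d\tau}_{\L1}$, and for a.e.\ $\tau$ between $s$ and $t$ one bounds $\norma{\div[r(\tau)\,u(\tau)]}_{\mathcal{M}(\Omega)} \le \norma{u}_{\Lip}\,\tv(r(\tau)) \cdot$(constants absorbed), or more directly via the standard estimate $\norma{r(t)-r(s)}_{\L1}\le \tv(r(\max\{t,s\}))\,\modulo{t-s}$ for the transport equation with the monotone-in-time TV bound; this also yields $r\in\C{0,1}(I;\L1)$. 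Finally~\eqref{eq:dipLip}: the difference $r - \tilde r$ solves the same linear equation $\partial_t(r-\tilde r) + \div[(r-\tilde r)u] = 0$ with datum $r_o - \tilde r_o$ and zero boundary value, so the $\L1$ contraction~\eqref{eq:rL1} applied to $r - \tilde r$ gives $\norma{r(t)-\tilde r(t)}_{\L1}\le\norma{r_o-\tilde r_o}_{\L1}$ immediately.
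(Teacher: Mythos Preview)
Your overall strategy matches the paper's: regularise the datum via~\eqref{eq:approxdi}, work with the explicit formula~\eqref{eq:6} for the smooth solutions $r_h$, and pass to the limit using lower semicontinuity of the total variation. The treatments of~\eqref{eq:rL1}, \eqref{eq:rLinf}, \eqref{eq:lipT} and~\eqref{eq:dipLip} are correct and essentially identical to the paper's.

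There is, however, a genuine misconception in your handling of~\eqref{eq:rTV}. You assert that ``the trace of the interior solution [on $\partial X(t;0,\Omega)$] need not vanish'' and that this interface carries a singular part of $Dr_h(t)$, which you then bound by $\mathcal{O}(1)\norma{r_o}_{\L\infty}$ times a surface measure. This is wrong for the regularised $r_h$: the very purpose of the cutoff in~\eqref{eq:approxdi} is that $r_o^h(\xi)=0$ for all $\xi\in\partial\Omega$, so by~\eqref{eq:6} the trace of $r_h(t,\cdot)$ on the free boundary $X(t;0,\partial\Omega)$ \emph{does} vanish. Hence $r_h(t)\in\W{1}{1}(\Omega)$ with no singular part, $\tv\bigl(r_h(t)\bigr)=\norma{\nabla r_h(t)}_{\L1(\Omega)}$, and the integral reduces to $X(t;0,\Omega)$ with no interface contribution whatsoever. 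The term $\mathcal{O}(1)\,\norma{r_o}_{\L\infty}$ in~\eqref{eq:rTV} comes \emph{entirely} from the regularisation bound~\eqref{eq:13} (the cost of forcing $r_o^h$ to vanish on $\partial\Omega$), not from any jump at time $t$. In the paper, one differentiates the explicit formula~\eqref{eq:6} rather than the PDE, uses the Gronwall bound $\norma{\nabla_x X(\tau;t,x)}\le\exp\bigl(\int_\tau^t\norma{\nabla u(s)}_{\L\infty}\d s\bigr)$, and after the change of variables $y=X(0;t,x)$ obtains
\[
\norma{\nabla r_h(t)}_{\L1(\Omega)}\le
\exp\!\left(\int_0^t\norma{\nabla u(\tau)}_{\L\infty}\d\tau\right)
\left(\tv(r_o^h)+\norma{r_o^h}_{\L1}\int_0^t\norma{\nabla\div u(\tau)}_{\L\infty}\d\tau\right),
\]
with~\eqref{eq:13} supplying $\tv(r_o^h)\le\mathcal{O}(1)\norma{r_o}_{\L\infty}+\tv(r_o)$ in the limit. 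Your plan still goes through once you drop the spurious interface analysis, but as written it misidentifies where the boundary term originates.
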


 \begin{proof}
   The proofs of~\eqref{eq:rL1} and~\eqref{eq:rLinf} directly follow
   from~\eqref{eq:6}. In particular, to get~\eqref{eq:rL1}, exploit
   the change of variable $y=X (0;t,x)$, so that $x=X (t;0,y)$,
   see~\cite[\S~5.1]{ColomboHertyMercier}. Note that if
   $x\in X (t;0,\Omega)$ then
   $y \in X\left(0; t, X (t; 0, \Omega) \right) \subseteq \Omega$.
   Denote the Jacobian of this change of variable by
   $J (t,y)=\det\left(\nabla_yX (t;0,y)\right)$. Then $J$ solves
   \begin{displaymath}
     \frac{\d{J (t,y)}}{\d{t}} = \div u \left(t,X (t;0,y)\right)  J (t,y)
     \quad \mbox{ with } \quad J (0,y)=1.
   \end{displaymath}
   Hence,
   $J (t,y) {=} \exp\left( \int_0^t \div u \left(\tau, X
       (\tau;0,y)\right) \d\tau \right)$, which implies $J (t,y)> 0$
   for $t \in [0,T]$ and $y \in \Omega$.

   To prove~\eqref{eq:rTV}, regularise the initial datum $r_o$ as in
   the proof of Lemma~\ref{lem:equiv}: $r_o^h \in \C3 (\Omega;\reali)$
   converges to $r_o$ in $\L1 (\Omega;\reali)$, $r_o^h (\xi)=0$ for
   all $\xi \in \partial \Omega$ and~\eqref{eq:12}--\eqref{eq:13}
   hold.
   % and $\tv (r_o^h) \leq \mathcal{O} (1) \norma{r_o}_{\L\infty
   % (\Omega;\reali)} + \tv (r_o)$.

   Using the sequence $r_o^h$, define according to~\eqref{eq:6} the
   corresponding sequence $r_h$ of solutions to~\eqref{eq:2}. Observe
   that $r_h (t) \in \C1 (\Omega; \reali)$ for every $t \in [0,T]$.
   Proceed similarly to the proof
   of~\cite[Lemma~5.4]{ColomboHertyMercier}: differentiate the
   solution to~\eqref{eq:5} with respect to the initial point, that is
   \begin{align*}
     \nabla_x X (\tau;t,x) = \
     &
       \boldsymbol{\Id} +
       \int_t^\tau \nabla_x u \left(t,X (s;t,x)\right) \nabla_x X (s;t,x) \d{s},
     \\
     \norma{\nabla_x X (\tau;t,x)} \leq \
     & 1 + \int_\tau^t \norma{\nabla_x u \left(t,X (s;t,x)\right)}
       \norma{\nabla_x X (s;t,x)} \d{s},
   \end{align*}
   since $\tau \in [0,t]$, so that, applying Gronwall Lemma,
   \begin{displaymath}
     \norma{\nabla_x X (\tau;t,x)} \leq
     \exp\left(
       \int_\tau^t \norma{\nabla_x u (s)}_{\L\infty (\Omega;\reali^{N \times N})}\d{s}
     \right).
   \end{displaymath}
   By~\eqref{eq:6} and the properties of $r_o^h$, the gradient of
   $r_h (t)$ is well defined (and continuous) on $\Omega$: in
   particular,
   \begin{align*}
     \nabla r_h (t,x) = \
     & \exp\left(\int_0^t - \div u \left(\tau,X (\tau;t,x)\right) \d\tau\right)
       \biggl(\nabla r_o^h \left(X (0;t,x)\right) \, \nabla_x X (0;t,x)
     \\
     & \qquad
       \left. -
       r_o^h\left(X (0;t,x)\right) \,
       \int_0^t \nabla \div u \left(\tau,X (\tau;t,x)\right) \,
       \nabla_x X (\tau;t,x) \d\tau
       \right).
   \end{align*}
   Hence, for every $t \in I$, using again the change of variable
   described at the beginning of the proof,
   \begin{equation}
     \label{eq:nablar}
     \begin{aligned}
       \!\!\norma{\nabla r_h (t)}_{\L1 (\Omega;\reali^N)} \leq \ &
       \exp\left(\int_0^t \norma{\nabla u \left(\tau\right)}_{\L\infty
           (\Omega;\reali^{N \times N})}\d\tau\right)
       \\
       & \times \left( \int_{\Omega} \modulo{\nabla r_o^h (x)}\d{x} +
         \norma{r_o^h}_{\L1 (\Omega;\reali)} \! \int_0^t \norma{\nabla
           \div u (\tau)}_{\L\infty (\Omega;\reali^N)} \d\tau \right).
     \end{aligned}
   \end{equation}

   Let $r$ be defined as in~\eqref{eq:6}: clearly, $r_h \to r$ in
   $\L1 (\Omega;\reali)$. Due to the lower semicontinuity of the total
   variation, to~\eqref{eq:nablar} and to the hypotheses on the
   approximation $r_o^h$, for $t \in I$ we get
   \begin{align*}
     \tv \left(r (t)\right) \leq \
     & \lim_h \tv \left(r_h (t) \right)
       = \lim_h \norma{\nabla r_h (t)}_{\L1 (\Omega; \reali^N)}
     \\
     \leq \
     & \exp\left(\int_0^t \norma{\nabla u \left(\tau\right)}_{\L\infty
       (\Omega;\reali^{N \times N})}\d\tau\right)
     \\
     & \times \left( \lim_h \tv(r_o^h)+ \lim_h \norma{r_o^h}_{\L1 (\Omega;\reali)} \,
       \int_0^t \norma{\nabla \div u (\tau)}_{\L\infty
       (\Omega;\reali^N)} \d\tau \right)
     \\
     \leq \
     & \exp\left(\int_0^t \norma{\nabla u \left(\tau\right)}_{\L\infty
       (\Omega;\reali^{N \times N})}\d\tau\right)
     \\
     & \times \left( \mathcal{O} (1)\norma{r_o}_{\L\infty (\Omega;\reali)}
       +
       \tv(r_o)+ \norma{r_o}_{\L1 (\Omega;\reali)} \,
       \int_0^t \norma{\nabla \div u (\tau)}_{\L\infty
       (\Omega;\reali^N)} \d\tau \right),
   \end{align*}
   concluding the proof of~\eqref{eq:rTV}. The proof of the
   $\L1$--Lipschitz continuity in time is done analogously, leading
   to~\eqref{eq:lipT}.

   Finally, \eqref{eq:dipLip} follows from~\eqref{eq:rL1}, due to the
   linearity of~\eqref{eq:2}.
 \end{proof}

 \begin{remark}
   \label{rem:note}
   {\rm We underline that the total variation estimate just obtained
     differs from that presented
     in~\cite[Lemma~5.3]{ColomboHertyMercier}, where the transport
     equation $\partial_t r + \div \left( r \, u (t,x)\right) = 0$ is
     studied not on a bounded domain $\Omega$, but on all
     $\reali^N$. Indeed, compare~\eqref{eq:rTV}
     and~\cite[Formula~(5.12)]{ColomboHertyMercier}: it is immediate
     to see that, in the case of a divergence free vector field $u$,
     the $\L\infty$--norm of the initial datum is still present in our
     case, while it is not
     in~\cite[Formula~(5.12)]{ColomboHertyMercier}. This is actually
     due to the presence of the boundary.

     Consider the following example to see the importance of the term
     $\norma{r_o}_{\L\infty (\Omega;\reali)}$ in~\eqref{eq:rTV}. Let
     $\Omega = B (0,1) \subset \reali^N$, $u (t,x)= - x$ and
     $r_o (x)=2$ for every $x \in \Omega$. Then, the solution
     to~\eqref{eq:5} is $X (t;t_o,x_o) = x_o \, e^{t_o-t}$. Since
     $\div u = -N$, the solution to~\eqref{eq:2} is:
     \begin{displaymath}
       r (t,x) =
       \begin{cases}
         2 \, e^{N \, t} & \mbox{ for } x \in B (0,e^{-t})
         \\
         0 & \mbox{elsewhere}.
       \end{cases}
     \end{displaymath}
     Therefore, for every $t \in \reali_+$, the total variation of
     $r (t)$ has contribution only from the \emph{jump} between
     $2 \, e^{N \, t}$ and $0$, multiplied by the $(N-1)$ dimensional
     measure of the boundary $\partial B (0,e^{-t})$, that is
     \begin{displaymath}
       % \label{eq:tv1}
       \tv \left(r (t)\right)
       = 2 \, e^{N \, t} \, \frac{2 \, \pi^{N/2} \, (e^{-t})^{N-1} }{\Gamma (N/2)}
       = 2  \, \frac{\pi^{N/2}}{\Gamma (N/2)} \, e^t,
     \end{displaymath}
     $\Gamma$ being the gamma function. Coherently,
     applying~\eqref{eq:rTV} we get
     \begin{displaymath}
       % \label{eq:tv2}
       \tv \left(r (t)\right) \leq
       e^t \, \mathcal{O} (1) \, \norma{r_o}_{\L\infty (\Omega;\reali)}
       = 2  \, \mathcal{O} (1) \, e^t \,,
     \end{displaymath}
     which confirms the necessity of the term
     $\norma{r_o}_{\L\infty (\Omega;\reali)}$ in the right hand side
     of~\eqref{eq:rTV}.}
 \end{remark}

We now provide a stability estimate of use below.

 \begin{lemma}
   \label{lem:stab}
   Let~$\boldsymbol{(\Omega)}$ hold. Let $u$ and $\tilde u$
   satisfy~\emph{\textbf{(u)}}. Assume
   $r_o \in (\L\infty \cap \BV) (\Omega;\reali)$. Call $r$ and
   $\tilde r$ the solutions to~\eqref{eq:2} obtained with $u$ and
   $\tilde u$, respectively. Then, for all $t \in I$,
   \begin{align}
     \nonumber
     &  \norma{r (t) - \tilde r (t)}_{\L1 (\Omega;\reali)}
     \\ \label{eq:14}
     \leq \
     & e^{\kappa (t)}
       \int_0^t
       \norma{(u - \tilde u) (s)}_{\L\infty (\Omega;\reali^N)}\d{s}
       \left[
       \mathcal{O} (1) \, \norma{r_o}_{\L\infty (\Omega;\reali)}
       +
       \tv (r_o)
       +
       \norma{r_o}_{\L1 (\Omega;\reali)} \,
       \kappa_1 (t)
       \right]
     \\ \nonumber
     &
       + \norma{r_o}_{\L1 (\Omega;\reali)}
       \int_0^t  \norma{\div (u-\tilde u) (s)}_{\L\infty (\Omega;\reali)}\d{s},
   \end{align}
   where
   \begin{align*}
     \kappa (t) = \
     & \int_0^t\max\left\{ \norma{\nabla u (s)}_{\L\infty
       (\Omega;\reali^{N\times N})}, \norma{\nabla \tilde u
       (s)}_{\L\infty (\Omega;\reali^{N\times N})} \right\} \d{s},
     \\
     \kappa_1 (t) = \
     & \int_0^t
       \max\left\{
       \norma{\nabla \div u (s)}_{\L\infty (\Omega;\reali^{N})},
       \norma{\nabla \div \tilde u (s)}_{\L\infty (\Omega;\reali^{N})}
       \right\} \d{s}.
   \end{align*}
 \end{lemma}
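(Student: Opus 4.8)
The plan is to reduce to smooth initial data, write down the transport equation solved by the difference of the two solutions, and read the estimate off an $\L1$ balance. First I would repeat the regularisation used in the proof of Lemma~\ref{lem:equiv}: pick $r_o^h \in \C3 (\Omega;\reali)$ vanishing on $\partial\Omega$, with $r_o^h \to r_o$ in $\L1 (\Omega;\reali)$ and satisfying the uniform bounds~\eqref{eq:12}--\eqref{eq:13}, and let $r_h$, $\tilde r_h$ be the solutions to~\eqref{eq:2} defined through~\eqref{eq:6} with velocity $u$, respectively $\tilde u$, and datum $r_o^h$; each $r_h (t)$, $\tilde r_h (t)$ is then $\C1$ in space (see the proof of Lemma~\ref{lem:properties}), hence so is $w_h := r_h - \tilde r_h$. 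Since $r_h \to r$ and $\tilde r_h \to \tilde r$ in $\L1 (\Omega;\reali)$, it is enough to prove~\eqref{eq:14} for $r_h$, $\tilde r_h$, with $r_o$ replaced by $r_o^h$, and then let $h \to +\infty$, using~\eqref{eq:12}--\eqref{eq:13} and $\norma{r_o^h}_{\L1 (\Omega;\reali)} \to \norma{r_o}_{\L1 (\Omega;\reali)}$. Subtracting the two copies of~\eqref{eq:2} and using $r_h\,u - \tilde r_h\,\tilde u = w_h\,u + \tilde r_h\,(u-\tilde u)$, the map $w_h$ solves the linear transport equation $\partial_t w_h + \div (w_h\, u) = S_h$, with source $S_h := -\div\bigl(\tilde r_h\,(u - \tilde u)\bigr)$ (an $\L1$ function, as $\tilde r_h (t)$ and $(u-\tilde u)(t)$ are $\C1$) and $w_h (0, \cdot) = 0$.

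Next I would multiply this equation by $\sgn (w_h)$ and integrate over $\Omega$, exactly as in the derivation of~\eqref{eq:rL1}, obtaining for a.e.\ $t \in I$
\begin{displaymath}
  \frac{\d{}}{\d{t}} \norma{w_h (t)}_{\L1 (\Omega;\reali)}
  +
  \int_{\partial\Omega} \modulo{w_h (t,\xi)} \; u (t,\xi)\cdot\nu (\xi) \d\xi
  \leq
  \norma{S_h (t)}_{\L1 (\Omega;\reali)} \,.
\end{displaymath}
The boundary integral is the term with no analogue in the whole space setting of~\cite[Lemma~5.1]{ColomboHertyMercier}, and its treatment is the heart of the proof. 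On the portion of $\partial\Omega$ where $u (t)\cdot\nu \geq 0$ the integrand is nonnegative and can be dropped. On the portion where $u (t)\cdot\nu < 0$ the trace of $r_h (t)$ vanishes, because by~\eqref{eq:6} $r_h (t)$ is supported in $X (t;0,\Omega)$, so there $\modulo{w_h (t)} = \modulo{\tilde r_h (t)}$; the trace of $\tilde r_h (t)$ in turn vanishes wherever $\tilde u (t)\cdot\nu < 0$, and on the remaining set $\{u (t)\cdot\nu < 0\} \cap \{\tilde u (t)\cdot\nu \geq 0\}$ one has $\modulo{u (t)\cdot\nu} \leq \modulo{(u - \tilde u) (t)\cdot\nu} \leq \norma{(u-\tilde u) (t)}_{\L\infty (\Omega;\reali^N)}$. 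Hence, by the trace inequality $\norma{\tilde r_h (t)}_{\L1 (\partial\Omega;\reali)} \leq \O \left(\norma{\tilde r_h (t)}_{\L1 (\Omega;\reali)} + \tv\bigl(\tilde r_h (t)\bigr)\right)$ together with~\eqref{eq:rL1} and~\eqref{eq:rTV} applied to $\tilde u$, the negative part of the boundary integral is bounded below by $-\norma{(u-\tilde u) (t)}_{\L\infty (\Omega;\reali^N)}$ times a quantity of the form $e^{\kappa (t)} \bigl( \O\, \norma{r_o^h}_{\L\infty (\Omega;\reali)} + \tv (r_o^h) + \norma{r_o^h}_{\L1 (\Omega;\reali)}\, \kappa_1 (t) \bigr)$.

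It then remains to bound the source, $\norma{S_h (t)}_{\L1 (\Omega;\reali)} \leq \tv\bigl(\tilde r_h (t)\bigr)\, \norma{(u-\tilde u) (t)}_{\L\infty (\Omega;\reali^N)} + \norma{\tilde r_h (t)}_{\L1 (\Omega;\reali)}\, \norma{\div (u - \tilde u) (t)}_{\L\infty (\Omega;\reali)}$, where $\tv\bigl(\tilde r_h (t)\bigr)$ is controlled through~\eqref{eq:rTV} (equivalently~\eqref{eq:nablar}) with velocity $\tilde u$ and $\norma{\tilde r_h (t)}_{\L1 (\Omega;\reali)} \leq \norma{r_o^h}_{\L1 (\Omega;\reali)}$ by~\eqref{eq:rL1}. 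Integrating the differential inequality in time from $0$ (recall $w_h (0,\cdot) = 0$), bounding the increasing factors $\exp\bigl(\int_0^\tau \norma{\nabla\tilde u (s)}_{\L\infty} \d{s}\bigr) \leq e^{\kappa (t)}$ and $\int_0^\tau \norma{\nabla\div\tilde u (s)}_{\L\infty} \d{s} \leq \kappa_1 (t)$, and collecting the $\tv (\tilde r_h)$ contribution together with the boundary contribution of the previous step, yields~\eqref{eq:14} with $r_o^h$ in place of $r_o$; letting $h \to +\infty$ and using~\eqref{eq:12}--\eqref{eq:13} (so that $\tv (r_o^h)$ gets absorbed into $\O\, \norma{r_o}_{\L\infty (\Omega;\reali)} + \tv (r_o)$) concludes.

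The step I expect to be the main obstacle is the boundary integral above: contrary to the unbounded case it does not vanish, and one has to use simultaneously its correct sign on the outflow boundary, the vanishing of the trace of $r_h$ on the inflow boundary, and the fact that the surviving $\tilde r_h$ contribution carries the factor $\modulo{u\cdot\nu}$, which is of order $\norma{u - \tilde u}_{\L\infty}$ precisely on the set where the inflow/outflow partitions induced by $u$ and by $\tilde u$ disagree. As an alternative, slightly coarser route one may instead compare the explicit representations~\eqref{eq:6} of $r_h$ and $\tilde r_h$ pointwise, estimating $\modulo{X (\tau;t,x) - \tilde X (\tau;t,x)}$ by Gronwall's lemma and the measure of the symmetric difference $X (t;0,\Omega)\, \triangle\, \tilde X (t;0,\Omega)$ by a layer of width of order $\norma{u - \tilde u}_{\L\infty}$ around $\partial X (t;0,\Omega)$.
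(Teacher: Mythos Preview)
Your proof is correct but follows a genuinely different route from the paper's. The paper does \emph{not} write a PDE for the difference and does not confront any boundary integral. Instead it interpolates: for $\theta\in[0,1]$ set $u_\theta=\theta\,u+(1-\theta)\,\tilde u$, let $r_\theta^h$ be the solution with velocity $u_\theta$ and regularised datum $r_o^h$ given by the explicit characteristics formula~\eqref{eq:6}, and compute $\partial_\theta r_\theta^h$ by differentiating that formula and the flow $X_\theta$ with respect to $\theta$. The bound then follows from $\norma{r^h(t)-\tilde r^h(t)}_{\L1}\le\int_0^1\int_{X_\theta(t;0,\Omega)}\modulo{\partial_\theta r_\theta^h}\,\d{x}\,\d\theta$ and the change of variables $y=X_\theta(0;t,x)$, after which the limit $h\to\infty$ is taken exactly as you do.

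What each buys: the paper's homotopy sidesteps the boundary entirely, because $\partial_\theta r_\theta^h$ is computed only on the set where~\eqref{eq:6} is nonzero and the Jacobian is absorbed by the change of variables; the price is having to compute $\partial_\theta X_\theta$ and a rather long expression for $\partial_\theta r_\theta^h$. Your $\L1$--balance is more direct and makes explicit where the extra term $\O\,\norma{r_o}_{\L\infty}$ (absent on $\reali^N$) originates --- it is exactly the trace contribution on the set where the inflow/outflow partitions for $u$ and $\tilde u$ disagree. Your splitting of $\partial\Omega$ by the signs of $u\cdot\nu$ and $\tilde u\cdot\nu$, combined with the $\BV$ trace inequality, handles this correctly; the only point deserving a line of justification is that the trace of $r_h(t)$ indeed vanishes on $\{u(t)\cdot\nu<0\}$, which follows from~\eqref{eq:6} once one notes that a neighbourhood in $\Omega$ of a strict inflow point lies in $X(t;[0,t[,\partial\Omega)$. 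The ``alternative, coarser route'' you sketch at the end (direct pointwise comparison of the two representations~\eqref{eq:6}) is closer in spirit to the paper's argument, though the paper's $\theta$--differentiation is cleaner than estimating the symmetric difference $X(t;0,\Omega)\,\triangle\,\tilde X(t;0,\Omega)$ directly.
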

 \begin{proof}
   Regularise the initial datum $r_o$ as in the proof of
   Lemma~\ref{lem:equiv}: for any $h \in \naturali\setminus\{0\}$ we
   have that $r_o^h\in\C3 (\Omega;\reali)$ converges to $r_o$ in
   $\L1 (\Omega; \reali)$, $r_o^h (\xi) = 0$ for all
   $\xi \in \partial\Omega$ and~\eqref{eq:12}--\eqref{eq:13} hold.

   For $\theta \in [0,1]$, set
   \begin{displaymath}
     u_\theta (t,x) =  \theta \, u (t,x) + (1-\theta) \, \tilde u (t,x).
   \end{displaymath}
   Call $r_\theta^h$ the solution to~\eqref{eq:2} corresponding to the
   vector field $u_\theta$ above and to the initial datum
   $r_o^h$. Consider the map $X_\theta$ associated to $u_\theta$, as
   in~\eqref{eq:5}. We have that
   $r_\theta^h (t) \in \C1 (\Omega;\reali)$ for every $t \in I$ and it
   satisfies~\eqref{eq:6}, that now reads as follow:
   \begin{equation}
     \label{eq:10}
     \!\!\! r_\theta^h (t,x)  =
     \left\{\!\!\!
       \begin{array}{lr}
         \displaystyle
         r_o^h \left(X_\theta (0;t,x)\right)
         \exp\left[
         -
         \int_0^t
         \div u_\theta \left(\tau,X_\theta (\tau;t,x)\right)
         \d\tau
         \right]
         & \mbox{if }x \in X_\theta (t;0,\Omega)
         \\
         0
         & \mbox{elsewhere.}
       \end{array}
     \right.
   \end{equation}
   Derive the analog of~\eqref{eq:5} with respect to $\theta$ and
   recall that $X_\theta (t;t,x)=x$ for all $\theta$:
   \begin{displaymath}
     \left\{
       \begin{array}{l}
         \partial_t \partial_\theta X_\theta (\tau;t,x) =
         u (\tau,X_\theta(\tau;t,x)) - \tilde u (\tau,X_\theta(\tau;t,x))
         + \nabla u_\theta (\tau,X_\theta(\tau;t,x))
         \; \partial_\theta X_\theta (\tau;t,x)
         \\
         % X_\theta (t) = x \quad \Longrightarrow
         % \quad
         \partial_\theta X_\theta (t;t,x) = 0 \,.
       \end{array}
     \right.
   \end{displaymath}
   The solution to this problem is given by
   \begin{align}
     \nonumber
     \partial_\theta X_\theta (\tau;t,x)
     =
     & \int_t^\tau \exp\left(\int_s^\tau
       \nabla u_\theta (\sigma,X_\theta(\sigma;t,x)) \d\sigma\right)\!\!
       \left(u \left(s,X_\theta(s;t,x)\right) -
       \tilde u \left(s,X_\theta(s;t,x)\right) \right) \d{s}
     \\
     \label{eq:dthxth}
     =
     &
       \int_\tau^t \exp\left(\int_\tau^s
       -\nabla u_\theta (\sigma,X_\theta(\sigma;t,x)) \d\sigma\right)
       (\tilde u - u)\! \left(s,X_\theta(s;t,x)\right)\d{s}.
   \end{align}
   Derive now the non zero expression in the right hand side
   of~\eqref{eq:10} with respect to $\theta$:
   \begin{align*}
     & \partial_\theta r_\theta^h (t,x)
     \\
     = \
     &
       \exp\left(
       \int_0^t
       - \div u_\theta \left(\tau,X_\theta (\tau;t,x)\right)
       \d\tau
       \right)
     \\
     & \times
       \biggl\{
       \nabla r_o^h \left(X_\theta (0;t,x)\right)
       \partial_\theta X_\theta (0;t,x)
       + r_o^h\left(X_\theta (0;t,x)\right)
       \int_0^t
       \div(\tilde u-u)\!
       \left(\tau, X_\theta (\tau;t,x)\right)
       \d\tau
     \\
     & \qquad
       \left.
       - r_o^h\left(X_\theta (0;t,x)\right)
       \int_0^t
       \nabla \div u_\theta
       \left(\tau,X_\theta (\tau;t,x)\right)
       \cdot
       \partial_\theta X_\theta (\tau;t,x)
       \d\tau
       \right\}
     \\
     = \
     & \exp\left(
       \int_0^t
       -\div u_\theta \left(\tau,X_\theta (\tau;t,x)\right)
       \d\tau
       \right)
     \\
     & \times
       \biggl\{
       \nabla r_o^h\left(X_\theta (0;t,x)\right)
       \int_0^t \exp \left(\int_0^s
       -\nabla u_\theta (\sigma,X_\theta(\sigma;t,x)) \d\sigma\right)
       (\tilde u -u)\! \left(s,X_\theta(s;t,x)\right) \d{s}
     \\
     &
       + r_o^h \left(X_\theta (0;t,x)\right)
       \int_0^t
       \div(\tilde u-u)\!
       \left(\tau, X_\theta (\tau;t,x)\right)
       \d\tau
     \\
     &
       - r_o^h \left(X_\theta (0;t,x)\right)
       \int_0^t
       \nabla \div u_\theta \left(\tau,X_\theta (\tau;t,x)\right)
     \\
     & \quad \times
       \left[ \int_\tau^t \exp\left(\int_\tau^s
       -\nabla u_\theta (\sigma,X_\theta(\sigma;t,x)) \d\sigma\right)
       (\tilde u - u) \! \left(s,X_\theta(s;t,x)\right) \d{s}\right]
       \d\tau
       \biggr\}\,,
   \end{align*}
   where we used~\eqref{eq:dthxth}. Call $r^h$ and
   $\tilde r^h$ the solutions to~\eqref{eq:2} corresponding to
   velocities $u$ and $\tilde u$ respectively, and initial datum
   $r_o^h$: in other words, $r^h = r^h_{\theta=1}$, while
   $\tilde r^h = r^h_{\theta=0}$. Compute
   \begin{equation}
     \label{eq:11}
     \norma{r^h (t) - \tilde r^h (t)}_{\L1 (\Omega;\reali)}
     \leq
     \int_{\Omega}
     \modulo{\int_0^1 \partial_\theta r_\theta^h (t,x) \d\theta} \d{x}
     \leq
     \int_0^1
     \int_{X_\theta (t;0,\Omega)} \modulo{\partial_\theta r_\theta^h (t,x)}\d{x}\d\theta.
   \end{equation}
   In particular, introduce the change of variable for $X_\theta$
   analogous to that presented at the beginning of the proof of
   Lemma~\ref{lem:properties},
   % introduce the change of variable
   % $y=X_\theta (0;t,x)$, so that $x=X_\theta (t;0,y)$. Obviously, if
   % $x\in X_\theta (t;0,\Omega)$ then $y\in\Omega$. Denote the
   % Jacobian
   % of this change of variable by
   % $J (t,y)=\det\left(\nabla_yX_\theta (t;0,y)\right)$. Then $J$
   % solves
   % \begin{displaymath}
   %   \frac{\d{J (t,y)}}{\d{t}} = \div u_\theta \left(t,X_\theta
   %     (t;0,y)\right) J (t,y)
   %   \quad \mbox{ with } \quad J (0,y)=1.
   % \end{displaymath}
   % Hence
   % $J (t,y) = \exp\left( \int_0^t \div u_\theta \left(\tau,X_\theta
   %     (\tau;0,y)\right) \d\tau \right)$,
   % which implies $J (t,y)> 0$ for all $t \in [0,T]$ and $y \in
   % \Omega$. Exploiting this change of variable,
   set $Y = X_\theta \left( 0; t, X_\theta (t; 0, \Omega)\right)$ and
   compute
   \begin{align*}
     & \int_{X_\theta (t;0,\Omega)} \modulo{\partial_\theta r_\theta^h (t,x)}\d{x}
     % \\
     % = \
     % &
     % \int_{\Omega}
     % \modulo{\partial_\theta r_\theta^h \left(t,X_\theta
     %   (t;0,y)\right)}
     % J (t,y)
     % \d{y}
     \\
     \leq \
     &
       \int_Y
       \modulo{\nabla r_o^h(y)
       \int_0^t \exp\left(\int_0^s
       -\nabla u_\theta \left(\sigma,X_\theta (\sigma;0,y)\right) \d\sigma\right)
       \,
       (\tilde u -u)\!\left(s,X_\theta (s;0,y)\right) \d{s}}\d{y}
     \\
     &
       + \int_Y
       \modulo{r_o^h (y)
       \int_0^t
       \div(\tilde u -u)\!\left(\tau,X_\theta (\tau;0,y)\right)\d\tau}\d{y}
     \\
     &
       + \int_Y
       \left|r_o^h (y) \,
       \int_0^t \nabla\div u_\theta\left(\tau,X_\theta (\tau;0,y)\right)
       \right.
     \\
     &
       \qquad\quad
       \left. \times
       \int_\tau^t
       \exp\left(
       \int_\tau^s
       -\nabla u_\theta \left(\sigma,X_\theta (\sigma;0,y)\right) \d\sigma
       \right)
       (\tilde u -u)\!\left(s,X_\theta (s;0,y)\right)\d{s}\d\tau\right|
       \d{y}
     \\
     \leq \
     &
       \left(\int_\Omega \modulo{\nabla r_o^h(y)}\d{y}\right)
       \,
       \exp\left(\int_0^t \norma{\nabla u_\theta (s)}_{\L\infty (\Omega;\reali^{N\times N})}
       \d{s} \right)
       \,
       \int_0^t \norma{(u-\tilde u) (s)}_{\L\infty (\Omega;\reali^N)} \d{s}
     \\
     &
       +
       \norma{r_o^h}_{\L1 (\Omega;\reali)} \,
       \int_0^t
       \norma{\div (u-\tilde u) (s)}_{\L\infty (\Omega;\reali)}\d{s}
     \\
     &
       +
       \norma{r_o^h}_{\L\infty (\Omega;\reali)}
       \,
       \int_0^t \norma{\nabla\div u_\theta (s)}_{\L1 (\Omega;\reali^N)} \d{s}
     \\
     &
       \quad
       \times
       \exp\left(\int_0^t \norma{\nabla u_\theta (s)}_{\L\infty (\Omega;\reali^{N\times N})}
       \d{s} \right)
       \,
       \int_0^t \norma{(u-\tilde u) (s)}_{\L\infty (\Omega;\reali^N)} \d{s}.
   \end{align*}
   Therefore, inserting the latter result above in~\eqref{eq:11}
   yields
   \begin{align}
     \label{eq:14a}
     &  \norma{r^h (t) - \tilde r^h (t)}_{\L1 (\Omega;\reali)}
     \\
     \nonumber
     \leq
     & \exp \! \left(\int_0^t\max\left\{
       \norma{\nabla u (s)}_{\L\infty (\Omega;\reali^{N\times N})},
       \norma{\nabla \tilde u (s)}_{\L\infty (\Omega;\reali^{N\times N})}
       \right\} \d{s}\right)
       \! \int_0^t
       \norma{(u - \tilde u) (s)}_{\L\infty (\Omega;\reali^N)}\d{s}
     \\
     \nonumber
     & {\times}
       \left[
       \int_\Omega \modulo{\nabla r_o^h(y)}\d{y}
       +
       \norma{r_o^h}_{\L\infty (\Omega;\reali)}
       \int_0^t
       \max\left\{
       \norma{\nabla \div u (s)}_{\L1 (\Omega;\reali^{N})},
       \norma{\nabla \div \tilde u (s)}_{\L1 (\Omega;\reali^{N})}
       \right\} \d{s}
       \right]
     \\
     \label{eq:14b}
     &
       + \norma{r_o^h}_{\L1 (\Omega;\reali)}
       \int_0^t  \norma{\div (u-\tilde u) (s)}_{\L\infty (\Omega;\reali)}\d{s}.
   \end{align}
   We now let $h$ tend to $+\infty$. We know that $r_o^h$ converges to
   $r_o$ in $\L1 (\Omega;\reali)$, so that $r_\theta^h$, solution
   to~\eqref{eq:2} with velocity $u_\theta$ and initial datum $r_o^h$,
   converges to a function $r_\theta$ in $\L1$ which is solution
   to~\eqref{eq:2} with velocity $u_\theta$ and initial datum $r_o$.
   Call $r = r_{\theta=1}$ and $\tilde r = r_{\theta=0}$: they are
   solutions to~\eqref{eq:2} with velocities $u$ and $\tilde u$
   respectively, and initial datum $r_o$. It is clear that $r^h \to r$
   and $\tilde r^h \to \tilde r$ in $\L1$. Therefore, the
   inequality~\eqref{eq:14a}--\eqref{eq:14b} in the limit
   $h \to +\infty$ reads
   \begin{align*}
     &  \norma{r (t) - \tilde r (t)}_{\L1 (\Omega;\reali)}
     \\
     \leq
     &
       \exp\left(\int_0^t\max\left\{
       \norma{\nabla u (s)}_{\L\infty (\Omega;\reali^{N\times N})},
       \norma{\nabla \tilde u (s)}_{\L\infty (\Omega;\reali^{N\times N})}
       \right\} \d{s}\right)
       \int_0^t
       \norma{(u - \tilde u) (s)}_{\L\infty (\Omega;\reali^N)}\d{s}
     \\
     & \times
       \biggl[
       \mathcal{O} (1) \, \norma{r_o}_{\L\infty (\Omega;\reali)}
       +
       \tv (r_o)
     \\
     & \qquad \left.
       +
       \norma{r_o}_{\L\infty (\Omega;\reali)}
       \int_0^t
       \max\left\{
       \norma{\nabla \div u (s)}_{\L1 (\Omega;\reali^{N})},
       \norma{\nabla \div \tilde u (s)}_{\L1 (\Omega;\reali^{N})}
       \right\} \d{s}
       \right]
     \\
     &
       + \norma{r_o}_{\L1 (\Omega;\reali)}
       \int_0^t  \norma{\div (u-\tilde u) (s)}_{\L\infty (\Omega;\reali)}\d{s},
   \end{align*}
   where we used the fact that
   $\displaystyle\int_\Omega \modulo{\nabla r_o^h (y)}\d{y} = \tv
   (r_o^h)$ and~\eqref{eq:13}.
 \end{proof}

 \begin{proofof}{Theorem~\ref{thm:fixpt}}
   The proof relies on a fixed point argument and consists of several
   steps. Fix
   $R = \max\left\{\norma{\rho_o}_{\L1 (\Omega;\reali^n)}, \,
     \norma{\rho_o}_{\L\infty (\Omega;\reali^n)}, \,
     \tv\left(\rho_o\right) \right\}$.
   Given a map $\mathcal{F} (t) \in \C0 (I;\reali_+)$, whose precise
   choice is given in the sequel, the following functional space is of
   use below:
   \begin{equation}
     \label{eq:X}
     \mathcal{X}_R
     =
     \left\{
       r \in \C0(I; \L1 (\Omega;\reali^n)) \colon
       \begin{array}{l}
         \norma{r}_{\L\infty (I;\L1 (\Omega;\reali^n))} \leq R
         \mbox{ and }
         \\
         \norma{r (t)}_{\L\infty (\Omega;\reali^n)} < +\infty
         \mbox{ for all } t \in I
         \\
         \tv \left(r (t)\right) \leq \mathcal{F} (t)
         \mbox{ for all } t \in I
       \end{array}
     \right\}
   \end{equation}
   with the distance
   $d (\rho_1, \rho_2) = \norma{\rho_1 - \rho_2}_{\L\infty (I;\L1
     (\Omega;\reali^n))}$,
   so that $\mathcal{X}_R$ is a complete metric space.

   Throughout, we denote by $C$ a positive constant that depends on
   the assumptions~\textbf{($\boldsymbol{\Omega}$)}, \textbf{(V)},
   \textbf{(J)}, on $R$ and on $n$. The constant $C$ does not depend on
   time. For the sake of simplicity, introduce the notation
   $\Sigma_t = [0,t] \times \Omega \times \reali^m$.

  \paragraph{Reduction to a Fixed Point Problem.}
  Define the map
  \begin{equation}
    \label{eq:7}
    \begin{array}{ccccl}
      \mathcal{T}
      & \colon
      & \mathcal{X}_R
      & \to
      & \mathcal{X}_R
      \\
      &
      & r
      & \to
      & \rho
    \end{array}
  \end{equation}
  where $\rho \equiv (\rho^1, \ldots, \rho^n)$ solves
  \begin{equation}
    \label{eq:3}
    \left\{
      \begin{array}{l@{\qquad}r@{\,}c@{\,}l}
        \partial_t \rho^i
        +
        \div \left[
        \rho^i \; V^i \left(t,x,\left(\mathcal{J}^i r (t)\right) (x)\right)
        \right]
        =
        0
        & (t,x)
        & \in
        & I \times \Omega
          \qquad i = 1, \ldots, n
        \\
        \rho (t, \xi) = 0
        & (t, \xi)
        & \in
        & I \times \partial \Omega
        \\
        \rho (0,x) = \rho_o (x)
        & x
        & \in
        & \Omega.
      \end{array}
    \right.
  \end{equation}
  A map $\rho \in \mathcal{X}_R$ solves~\eqref{eq:1} in the sense of
  Definition~\ref{def:sol} if and only if $\rho$ is a fixed point for
  $\mathcal{T}$.

  \paragraph{$\mathcal{T}$ is Well Defined.}
  Given $r \in \mathcal{X}_R$, by~\textbf{(V)} and~\textbf{(J)}, for
  $i=1, \ldots, n$ each map
  \begin{equation}
    \label{eq:15}
    u^i (t,x)
    =
    V^i\left(t,x,\left(\mathcal{J}^i r (t)\right) (x)\right)
  \end{equation}
  satisfies~\textbf{(u)}. The solution $\rho$ to~\eqref{eq:3} is well
  defined, unique and belongs to $\C0 (I; \L1 (\Omega;\reali^n))$.
  With the notation introduced above, by~\eqref{eq:rL1} in
  Lemma~\ref{lem:properties}, for all $t \in I$,
  \begin{equation}
    \label{eq:8}
    \norma{\rho (t)}_{\L1 (\Omega;\reali^n)}
    \leq
    \norma{\rho_o}_{\L1 (\Omega;\reali^n)}
  \end{equation}
  and, by~\textbf{(V)}, \textbf{(J)} and~\eqref{eq:rLinf},
  \begin{align}
    \nonumber
    \norma{\rho^i (t)}_{\L\infty (\Omega;\reali)}
    \leq
    & \norma{\rho^i_o}_{\L\infty (\Omega;\reali)}
      \! \exp \! \left[
      t \norma{\div V^i}_{\L\infty (\Sigma_{t}; \reali)}
      {+}
      t K \norma{\nabla_w V^i}_{\L\infty (\Sigma_{t};  \reali^{N\times m})}
      \norma{r (t)}_{\L1 (\Omega;\reali^n)}
      \right]
    \\
    \nonumber
    \leq
    & \norma{\rho_o^i}_{\L\infty (\Omega;\reali)} \exp\left(
      t \, \mathcal{V} \left(
      1 +  K \, R
      \right)
      \right)
    \\
    \nonumber
    \leq \
    & \norma{\rho_o^i}_{\L\infty (\Omega;\reali)} \, e^{C\, t}
      \qquad\qquad \mbox{ for } i=1, \ldots,n, \mbox{ so that}
    \\
    \label{eq:18}
    \norma{\rho (t)}_{\L\infty (\Omega;\reali^n)}
    \leq \
    & \norma{\rho_o}_{\L\infty (\Omega;\reali^n)} \, e^{C\, t} \,.
  \end{align}
  Applying~\eqref{eq:rTV} in Lemma~\ref{lem:properties}, with the help
  of~\textbf{(V)} and~\textbf{(J)}, for all $t \in I$ and all
  $i=1,\ldots, n$,
  \begin{align}
    \label{eq:tvn1}
    \tv \left(\rho^i (t)\right)
    % \nonumber
    \leq
    & \exp\left(\int_0^t \norma{\nabla u^i \left(\tau\right)}_{\L\infty
      (\Omega;\reali^{N \times N})}\d\tau\right)
    \\ \nonumber
    & \times \left( \mathcal{O} (1)\norma{\rho_o^i}_{\L\infty (\Omega;\reali)}
      +
      \tv(\rho_o^i)+ \norma{\rho^i_o}_{\L1 (\Omega;\reali)} \,
      \int_0^t
      \norma{\nabla \div u^i (\tau)}_{\L\infty (\Omega;\reali^N)} \d\tau \right)
    \\ \nonumber
    \leq
    & \exp\left(t \norma{\nabla V^i}_{\L\infty(\Sigma_t;\reali^{N \times N})}
      + t \, K \, \norma{\nabla_w V^i}_{\L\infty (\Sigma_t; \reali^{N\times m})}
      \norma{r (t)}_{\L1 (\Omega;\reali^n)} \right)
    \\ \nonumber
    & \times \biggl[ \mathcal{O} (1)\norma{\rho^i_o}_{\L\infty (\Omega;\reali)}
      +
      \tv(\rho^i_o) + t \, \norma{\rho^i_o}_{\L1 (\Omega;\reali)}
      \biggl(
      \norma{\nabla_x \div V^i}_{\L\infty (\Sigma_t; \reali^N)}
    \\
    \nonumber
    & \qquad +
      K \, \left(\norma{\nabla_w \div V^i}_{\L\infty (\Sigma_t; \reali^m)}
      +
      \norma{\nabla_x \nabla_w V^i}_{\L\infty (\Sigma_t; \reali^{N\times m \times N})}\right)
      \norma{r (t)}_{\L1 (\Omega;\reali^n)}\biggr.
    \\
    \nonumber
    & \qquad
      + K^2 \,
      \norma{\nabla^2_{ww} V^i}_{\L\infty (\Sigma_t;\reali^{N \times m \times m})} \,
      \norma{r (t)}^2_{\L1 (\Omega;\reali^n)}
    \\
    \nonumber
    & \qquad
      \left.\biggl.
      +
      \norma{\nabla_w V^i}_{\L\infty (\Sigma_t; \reali^{N \times m})} \,
      \mathcal{K}\! \left(\norma{r (t)}_{\L1 (\Omega;\reali^n)}\right)\,
      \norma{r (t)}_{\L1 (\Omega;\reali^n)}
      \biggr)
      \right]
    \\
    \label{eq:tvn2}
    \leq
    & \left(C \, t + C \, \norma{\rho^i_o}_{\L\infty (\Omega;\reali)}
      + \tv (\rho_o^i)\right)
      \, e^{C t} \,,
  \end{align}
  so that
  \begin{equation}
    \label{eq:tvn3}
    \tv \left(\rho (t)\right)
    \leq
    \left(C \, t + C \, \norma{\rho_o}_{\L\infty (\Omega;\reali^n)}
      + \tv (\rho_o)\right)
    \, e^{C t} \,.
  \end{equation}
  The map $\mathcal{T}$ is thus well defined, setting in~\eqref{eq:X}
  \begin{equation}
    \label{eq:9}
    \mathcal{F} (t)
    =
    \left(C \, t + C \, \norma{\rho_o}_{\L\infty (\Omega;\reali^n)}
      + \tv (\rho_o)\right)
    \, e^{C t} \,.
  \end{equation}

  \paragraph{$\mathcal{T}$ is a Contraction.}
  For any $r_1, r_2 \in \mathcal{X}_R$, denote for $j = 1,2$,
  $\rho_j = \mathcal{T} (r_j)$ and, correspondingly, $u^i_j$ as
  in~\eqref{eq:15} for $i=1, \ldots, n$.  Compute, thanks
  to~\textbf{(V)} and~\textbf{(J)},
  \begin{align*}
    \norma{\nabla u^i_j (t)}_{\L\infty (\Omega;\reali^{N\times N})}
    \leq \
    & \norma{\nabla_x V^i}_{\L\infty (\Sigma_t;\reali^{N\times N})}
      +
      \norma{\nabla_w V^i}_{\L\infty (\Sigma_t;\reali^{N\times m})}
      \norma{\nabla_x \mathcal{J}^i r_j (t)}_{\L\infty (\Omega;\reali^{m\times N})}
    \\
    \leq \
    & \mathcal{V} \left(
      1
      +
      K \, \norma{r_j (t)}_{\L1 (\Omega;\reali^n)}
      \right)
    \\
    \leq \
    & \mathcal{V} \, (1 +  K \, R)
    \\
    \leq \
    & C
  \end{align*}
  and
  \begin{align*}
    & \norma{\nabla \div u^i_j (t)}_{\L\infty (\Omega;\reali^{N})}
    \\ \leq \
    & \norma{\nabla_x \div V^i}_{\L\infty (\Sigma_t; \reali^N)}
    \\
    & +
      \left(\norma{\nabla_w \div V^i}_{\L\infty (\Sigma_t; \reali^m)}
      +
      \norma{\nabla_x \nabla_w V^i}_{\L\infty (\Sigma_t; \reali^{N\times m \times N})}\right)
      \norma{\nabla_x \mathcal{J}^i r_j (t)}_{\L\infty (\Omega;\reali^{m\times N})}
    \\
    & + \norma{\nabla^2_w V^i}_{\L\infty (\Sigma_t;\reali^{N \times m \times m})} \,
      \norma{\nabla_x \mathcal{J}^i r_j (t)}^2_{\L\infty (\Omega;\reali^{m\times N})}
    \\
    & +
      \norma{\nabla_w V^i}_{\L\infty (\Sigma_t; \reali^{N \times m})} \,
      \norma{\nabla_x^2 \mathcal{J}^i r_j (t)}_{\L\infty (\Omega;\reali^{m \times N \times N})}
    \\
    \leq \
    & \mathcal{V}
      \left(
      1
      +
      K \, \norma{r_j (t)}_{\L1 (\Omega;\reali^n)}
      +
      K^2 \, \norma{r_j (t)}^2_{\L1 (\Omega;\reali^n)}
      +
      \mathcal{K}\! \left(\norma{r_j (t)}_{\L1 (\Omega;\reali^n)}\right)\,
      \norma{r_j (t)}_{\L1 (\Omega;\reali^n)}
      \right)
    \\
    \leq \
    & \mathcal{V}
      \left(
      1
      +
      K \, R
      +
      K^2 \, R^2
      +
      \mathcal{K}\! \left(R\right)\,
      R
      \right)
    \\
    \leq \
    & C \,.
  \end{align*}
  Furthermore, still using assumption~{\bf{(J)}}, we have that, for
  all $t \in I$,
  \begin{align*}
    \norma{(u^i_2 - u^i_1) (t)}_{\L\infty (\Omega;\reali^N)} \leq \
    &
      \norma{\nabla_w V^i}_{\L\infty (\Sigma_t; \reali^{N \times m})} \,
      \norma{\mathcal{J}^i r_2 (t)- \mathcal{J}^i r_1 (t)}_{\L\infty (\Omega;\reali^m)}
    \\
    \leq \
    & \mathcal{V} \,
      K \, \norma{r_2 (t) - r_1 (t)}_{\L1 (\Omega;\reali^n)}
    \\
    \leq \
    & C \, \norma{r_2 (t) - r_1 (t)}_{\L1 (\Omega;\reali^n)} \,.
    % \end{align*}
    % and
    % \begin{align*}
    \\
    \norma{\div(u^i_2 - u^i_1) (t)}_{\L\infty (\Omega;\reali)} \leq \
    & \norma{\nabla_w \div V^i}_{\L\infty (\Sigma_t;\reali^m)} \,
      \norma{\mathcal{J}^i r_2 (t) -
      \mathcal{J}^i r_1 (t)}_{\L\infty (\Omega;\reali^m)}
    \\
    & +
      \norma{\nabla_w V^i}_{\L\infty (\Sigma_t;\reali^{N \times m})} \,
      \norma{\nabla_x \mathcal{J}^i r_2 (t) -
      \nabla_x \mathcal{J}^i r_1 (t)}_{\L\infty (\Omega;\reali^{m\times N})}
    \\
    \leq \
    & \mathcal{V} \left(
      K
      +
      \mathcal{K} \left(\norma{r_1 (t)}_{\L1 (\Omega;\reali^n)}\right)
      \right) \norma{r_2 (t) - r_1 (t)}_{\L1 (\Omega;\reali^n)}
    \\
    \leq \
    & C \,
      \norma{r_2 (t) - r_1 (t)}_{\L1 (\Omega;\reali^n)} \,.
  \end{align*}
  Therefore, for all $t \in I$, by Lemma~\ref{lem:stab}, with obvious
  notation we have
  \begin{align*}
    &  \norma{\rho^i_2 (t) - \rho^i_1 (t)}_{\L1 (\Omega;\reali)}
    \\
    \leq \
    & e^{\kappa (t)}
      \int_0^t
      \norma{(u^i_2 - u^i_1) (s)}_{\L\infty (\Omega;\reali^N)}\d{s}
      \left[
      \mathcal{O} (1) \, \norma{\rho^i_o}_{\L\infty (\Omega;\reali)}
      +
      \tv (\rho^i_o)
      +
      \kappa_1 (t) \, \norma{\rho^i_o}_{\L1 (\Omega;\reali)}
      \right]
    \\
    & + \norma{\rho^i_o}_{\L1 (\Omega;\reali)}
      \int_0^t  \norma{\div (u^i_2 - u^i_1) (s)}_{\L\infty (\Omega;\reali)}\d{s}
    \\
    \leq \
    & C \, t
      \left[
      e^{C \, t}
      \left(
      \mathcal{O} (1) \, \norma{\rho_o}_{\L\infty (\Omega;\reali^n)}
      +
      \tv (\rho_o)
      +
      C \, t
      \right)
      + C
      \right]
      \norma{r_2 - r_1}_{\L\infty ([0,t]; \L1 (\Omega;\reali^n))}
    \\
    \leq \
    &   C \, t
      \left[
      e^{C \, t}
      \left(
      C \, \norma{\rho_o}_{\L\infty (\Omega;\reali^n)}
      +
      \tv (\rho_o)
      +
      C \, t
      \right)
      + C
      \right]
      \norma{r_2 - r_1}_{\L\infty ([0,t]; \L1 (\Omega;\reali^n))}
      \,,
  \end{align*}
  We obtain that $\mathcal{T}$ is a contraction when restricted to the
  time interval $[0, T_1]$, with $T_1$ such that
  \begin{equation}
    \label{eq:16}
    C \, T_1
    \left[
      e^{C \, T_1}
      \left(
        C \, \norma{\rho_o}_{\L\infty (\Omega;\reali^n)}
        +
        \tv (\rho_o)
        +
        C \, T_1
      \right)
      + C
    \right]
    = \frac12 \,.
  \end{equation}

  \paragraph{Existence of a solution on $[0, T_1]$.} By the steps
  above, there exists a fixed point $\rho_1 \in \mathcal{X}_R$ for the
  map $\mathcal{T}$ defined in~\eqref{eq:7}, restricted to functions
  defined on the time interval $[0, T_1]$. By construction, $\rho_1$
  solves~\eqref{eq:1} on the time interval $[0, T_1]$.

  \paragraph{Existence of a solution on $I$.} We consider two cases:
  $I=\reali_+$ and $I=[0,T]$, for a fixed positive $T$. If, in the
  second case, $T_1 \geq \sup I$, the statement obviously
  holds. Otherwise, if $T_1 < \sup I$, we extend $\rho_1$ to $I$ by
  iterating the procedure above.

  Assume that the solution exists up to the time $T_{k-1} < \sup I$.
  Thanks to the bounds~\eqref{eq:18} and~\eqref{eq:tvn2}, define
  recursively $T_k$ so that
  \begin{equation}
    \label{eq:4}
    \begin{aligned}
      C \, (T_k -T_{k-1}) \left[ \left(2 \, C \,
          \norma{\rho_o}_{\L\infty (\Omega;\reali^n)} + \tv (\rho_o) +
          C \, T_{k-1} \right) e^{C \, T_k }\right.  &
      \\
      \left.+ C \left( T_k - T_{k-1}\right) e^{C \, (T_k - T_{k-1})} +
        C \right] & = \frac12.
    \end{aligned}
  \end{equation}
  Indeed, the above procedure ensures that there exists a fixed point
  for the map $\mathcal{T}$ defined in~\eqref{eq:7}, restricted to
  functions defined on the time interval $[T_{k-1}, T_k]$. If, in the
  case of the time interval $I = [0,T]$, $T_k \geq \sup I$, the
  statement is proved. Otherwise, if we assume that the sequence
  $(T_k)$ remains less than $\sup I$, it is in particular
  bounded. Hence, the left hand side of the relation above tends to
  $0$, while the right hand side is $1/2 > 0$.  Therefore, the
  sequence $(T_k)$ is unbounded, ensuring that, for $k$ large, $T_k$
  is greater than $\sup I$, thus the solution to~\eqref{eq:1} is
  defined on all $I$.

  \paragraph{Bounds on the solution.} The $\L1$--bound follows
  immediately by the construction of the solution. By~\eqref{eq:18} we
  have
  \begin{align*}
    \norma{\rho^i (t)}_{\L\infty (\Omega;\reali)}
    \leq \
    &
      \norma{\rho^i_o}_{\L\infty (\Omega;\reali)}
      \exp\left(
      t \, \mathcal{V}
      \left(
      1 + K \,  \norma{\rho (t)}_{\L1 (\Omega;\reali^n)}
      \right)
      \right)
      \qquad \mbox{ whence}
    \\
    \norma{\rho (t)}_{\L\infty (\Omega;\reali^n)}
    \leq \
    &
      \norma{\rho_o}_{\L\infty (\Omega;\reali^n)}
      \exp\left(
      t \, \mathcal{V}
      \left(
      1 + K \,  \norma{\rho_o}_{\L1 (\Omega;\reali^n)}
      \right)
      \right).
  \end{align*}
  Moreover, by~\eqref{eq:tvn1}--\eqref{eq:tvn2}
  \begin{align*}
    \tv \! \bigl(\rho^i (t)\bigr)
    \leq
    & \exp\left(
      t \, \mathcal{V}
      \left(
      1 + K \,  \norma{\rho (t)}_{\L1 (\Omega;\reali^n)}
      \right)
      \right)
    \\
    & \times \!
      \biggl(
      \mathcal{O} (1) \norma{\rho^i_o}_{\L\infty (\Omega;\reali)}
      +
      \tv\left( \rho^i_o \right)
      +
      t \, \norma{\rho^i_o}_{\L1 (\Omega;\reali)} \,
      \mathcal{V}
    \\
    & \left. \times \!
      \left(
      1 {+} K \norma{\rho (t)}_{\L1 (\Omega;\reali^n)}
      + K^2  \norma{\rho (t)}_{\L1 (\Omega;\reali^n)}^2
      + \mathcal{K}\!\left( \norma{\rho (t)}_{\L1 (\Omega;\reali^n)}  \right)
      \norma{\rho (t)}_{\L1 (\Omega;\reali^n)}
      \right) \!
      \right) .
      % \end{align*}
      % so that
      % \begin{align*}
    \\
    \tv \bigl(\rho (t)\bigr)
    \leq
    & \ \exp\left(
      t \, \mathcal{V}
      \left(
      1 + K \,  \norma{\rho_o}_{\L1 (\Omega;\reali^n)}
      \right)
      \right)
    \\
    & \times
      \biggl(
      \mathcal{O} (1) n \, \norma{\rho_o}_{\L\infty (\Omega;\reali^n)}
      +
      \tv\left( \rho_o \right)
      +
      n \, t \, \norma{\rho_o}_{\L1 (\Omega;\reali^n)} \,
      \mathcal{V}
    \\
    & \left. \times
      \left(
      1 + K \,  \norma{\rho_o}_{\L1 (\Omega;\reali^n)}
      + K^2  \norma{\rho_o}_{\L1 (\Omega;\reali^n)}^2
      + \mathcal{K}\!\left( \norma{\rho_o}_{\L1 (\Omega;\reali^n)}  \right) \,
      \norma{\rho_o}_{\L1 (\Omega;\reali^n)}
      \right) \!
      \right),
  \end{align*}
  concluding the proof of~(\ref{item:bounds}).

  \paragraph{Lipschitz dependence on time.} Apply~\eqref{eq:lipT} in
  Lemma~\ref{lem:properties} and the total variation estimate obtained
  in the previous step: for any $t, \, s \in I$
  \begin{displaymath}
    \norma{\rho (t) - \rho (s)}_{\L1 (\Omega;\reali^n)}
    \leq
    \tv\left(\rho \left(\max\{t,s\}\right)\right) \, \modulo{t-s}.
  \end{displaymath}

  \paragraph{Lipschitz dependence on the initial datum.}
  Assume that $I = [0, t]$, so that
  $\lim\limits_{k\to +\infty} T_k = t$, where $T_k$ is defined
  recursively through~\eqref{eq:4}, which can be rewritten as follows:
  \begin{equation}
    \label{eq:19}
    C \, (T_k -T_{k-1}) \left[ \left( (2 \, C +1) \, R + C \, T_{k-1}
      \right) e^{C \, T_k } + C \left( T_k - T_{k-1}\right)
      e^{C \, (T_k - T_{k-1})} + C \right] = \frac12,
  \end{equation}
  the constant $C$ depending on the
  assumptions~\textbf{($\boldsymbol{\Omega}$)}, \textbf{(V)},
  \textbf{(J)} and on $R$, which is now defined as
  \begin{displaymath}
    R = \max\left\{
      \norma{\rho_o}_{\L1 (\Omega;\reali^n)}, \,
      \norma{\tilde \rho_o}_{\L1 (\Omega;\reali^n)}, \,
      \norma{\rho_o}_{\L\infty (\Omega;\reali^n)}, \,
      \norma{\tilde \rho_o}_{\L\infty (\Omega;\reali^n)}, \,
      \tv (\rho_o), \,
      \tv (\tilde \rho_o)
    \right\}.
  \end{displaymath}
  To make evident the dependence of $\mathcal{T}$ on the initial
  datum, introduce the space
  \begin{displaymath}
    \mathcal{Y}_R
    =
    \left\{
      \rho_o \in (\L\infty \cap \BV) (\Omega;\reali^n)  \colon
      \norma{\rho_o}_{\L1 (\Omega;\reali^n)} \leq R, \,
      \norma{\rho_o}_{\L\infty (\Omega;\reali^n)} \leq R, \,
      \tv(\rho_o) \leq R
    \right\}
  \end{displaymath}
  and slightly modify the map $\mathcal{T}$ to
  \begin{displaymath}
    \begin{array}{ccccl}
      \mathcal{T}
      & \colon
      & \mathcal{X}_R
        \times
        \mathcal{Y}_R
      & \to
      & \mathcal{X}_R
      \\
      &
      & r
        , \,
        \rho_o
      & \to
      & \rho
    \end{array}
  \end{displaymath}
  where $\rho$ solves~\eqref{eq:3}. The map $\mathcal{T}$ is a
  contraction in $r \in \mathcal{X}_R$, Lipschitz continuous in
  $\rho_o \in \mathcal{Y}_R$, when restricted to functions defined on
  each time interval $[T_k, T_{k+1}]$. In particular,
  \begin{align*}
    & \norma{
      \mathcal{T} (r, \rho (T_k)) -
      \mathcal{T} (\tilde r, \tilde\rho (T_k))}_{\L\infty ([T_k, T_{k+1}];\L1 (\Omega;\reali^n))}
    \\ \leq \
    & \dfrac{1}{2} \,
      \norma{r-\tilde r}_{\L\infty ([T_k, T_{k+1}];\L1 (\Omega;\reali^n))}
      +
      \norma{\rho (T_k) - \tilde\rho (T_k)}_{\L1 (\Omega;\reali^n)}
  \end{align*}
  by~\eqref{eq:19} and~\eqref{eq:dipLip}. Hence,
  $\norma{\rho (T_k) - \tilde\rho (T_k)}_{\L1 (\Omega; \reali^n)} \leq
  2 \norma{\rho (T_{k-1}) - \tilde\rho (T_{k-1})}_{\L1 (\Omega;
    \reali^n)}$,
  which recursively yields
  $\norma{\rho (T_k) - \tilde\rho (T_k)}_{\L1 (\Omega; \reali^n)} \leq
  2^k \, \norma{\rho_o - \tilde\rho_o}_{\L1 (\Omega; \reali^n)}$.
  The term in square brackets in the left hand side of~\eqref{eq:19}
  is uniformly bounded in $k$ by a positive constant, say,
  $A_t$. Therefore, $T_k \geq \dfrac{1}{2\,A_t\,C} + T_{k-1}$ which
  recursively yields $T_k \geq k/(2\,A_t\,C)$ and
  $k \leq 2\,A_t\,C\,T_k < 2\,A_t\,C\,t$, so that
  \begin{displaymath}
    \label{eq:22}
    \norma{\rho (t) - \tilde\rho (t)}_{\L1 (\Omega;\reali^n)}
    =
    \lim_{k\to+\infty} \norma{\rho (T_k) - \tilde\rho (T_k)}_{\L1 (\Omega; \reali^n)}
    \leq
    2^{2\, A_t \, C \, t} \; \norma{\rho_o - \tilde\rho_o}_{\L1 (\Omega;\reali^n)}
  \end{displaymath}
  completing the proof of~(\ref{item:lipid}).

  \paragraph{Stability estimate.}
  We aim to apply~\eqref{eq:14} in Lemma~\ref{lem:stab}. Exploit the
  definition
  $u^i (t,x) = V^i \left(t,x,\left(\mathcal{J}^i \rho (t)\right)
    (x)\right)$ and compute, thanks to~\textbf{(V)} and~\textbf{(J)}:
  \begin{align*}
    \norma{\nabla u^i (t)}_{\L\infty (\Omega;\reali^{N\times N})}
    \leq \
    & \norma{\nabla_x V^i}_{\L\infty (\Sigma_t;\reali^{N\times N})}
      +
      \norma{\nabla_w V^i}_{\L\infty (\Sigma_t;\reali^{N\times m})}
      \norma{\nabla_x \mathcal{J}^i \rho (t)}_{\L\infty (\Omega;\reali^{m\times N})}
    \\
    \leq \
    & \mathcal{V} \left(
      1
      +
      K \, \norma{\rho (t)}_{\L1 (\Omega;\reali^n)}
      \right)
    \\
    \leq \
    & \mathcal{V} \left(
      1
      +
      K \, \norma{\rho_o}_{\L1 (\Omega;\reali^n)}
      \right)
  \end{align*}
  and
  \begin{align*}
    &\norma{\nabla \div u^i (t)}_{\L\infty (\Omega;\reali^{N})}
    \\
    \leq \
    & \norma{\nabla_x \div V^i}_{\L\infty (\Sigma_t; \reali^N)}
    \\
    & +
      \left(\norma{\nabla_w \div V^i}_{\L\infty (\Sigma_t; \reali^m)}
      +
      \norma{\nabla_x \nabla_w V^i}_{\L\infty (\Sigma_t; \reali^{N\times m \times N})}\right)
      \norma{\nabla_x \mathcal{J}^i \rho (t)}_{\L\infty (\Omega;\reali^{m\times N})}
    \\
    & + \norma{\nabla^2_w V^i}_{\L\infty (\Sigma_t;\reali^{N \times m \times m})} \,
      \norma{\nabla_x \mathcal{J}^i \rho (t)}^2_{\L\infty (\Omega;\reali^{m\times N})}
    \\
    & +
      \norma{\nabla_w V^i}_{\L\infty (\Sigma_t; \reali^{N \times m})} \,
      \norma{\nabla_x^2 \mathcal{J}^i \rho (t)}_{\L\infty (\Omega;\reali^{m \times N \times N})}
    \\
    \leq \
    & \mathcal{V}
      \left(
      1
      +
      K \, \norma{\rho (t)}_{\L1 (\Omega;\reali^n)}
      +
      K^2 \, \norma{\rho (t)}^2_{\L1 (\Omega;\reali^n)}
      +
      \mathcal{K}\! \left(\norma{\rho (t)}_{\L1 (\Omega;\reali^n)}\right)\,
      \norma{\rho (t)}_{\L1 (\Omega;\reali^n)}
      \right)
    \\
    \leq \
    & \mathcal{V}
      \left(
      1
      +
      \norma{\rho (t)}_{\L1 (\Omega;\reali^n)}
      \left(
      K +
      K^2 \, \norma{\rho_o}_{\L1 (\Omega;\reali^n)}
      +
      \mathcal{K}\! \left(\norma{\rho_o}_{\L1 (\Omega;\reali^n)}\right)\,
      \right)
      \right)\, ,
  \end{align*}
  and the same estimates hold for each $\tilde u^i$, defined by
  $\tilde u^i (t,x) = \tilde V^i \left(t,x,\left(\mathcal{J}^i \tilde\rho
      (t)\right) (x)\right)$.

  Moreover, still by~\textbf{(V)} and~\textbf{(J)},
  \begin{align*}
    \norma{(u^i-\tilde u^i) (t)}_{\L\infty (\Omega;\reali^N)}
    = \
    & \esssup_{x \in \Omega}
      \modulo{V^i \left(t,x, \left(\mathcal{J}^i\rho (t) \right)(x)\right) -
      \tilde V^i \left(t,x, \left(\mathcal{J}^i\tilde\rho (t) \right)(x)\right)}
    \\
    \leq \
    & \esssup_{x \in \Omega}
      \modulo{V^i \left(t,x, \left(\mathcal{J}^i\rho (t) \right)(x)\right) -
      V^i \left(t,x, \left(\mathcal{J}^i\tilde\rho (t) \right)(x)\right)}
    \\
    &  +
      \esssup_{x \in \Omega}
      \modulo{V^i \left(t,x, \left(\mathcal{J}^i\tilde\rho (t) \right)(x)\right) -
      \tilde V^i \left(t,x, \left(\mathcal{J}^i\tilde\rho (t) \right)(x)\right)}
    \\
    \leq \
    & \norma{\nabla_w V^i}_{\L\infty (\Sigma_t;\reali^{N\times m})} \,
      \norma{\mathcal{J}^i\rho (t) - \mathcal{J}^i\tilde\rho (t)}_{\L\infty (\Omega;\reali^m)}
    \\
    & +
      \norma{(V^i-\tilde V^i) (t)}_{\L\infty (\Omega \times B (0, K \norma{\rho_o}_{\L1 (\Omega;\reali^n)});\reali^N)}
    \\
    \leq \
    & \mathcal{V} K
      \norma{\left(\rho  - \tilde\rho \right)(t)}_{\L1 (\Omega;\reali^n)}
      {+}
      \norma{(V^i-\tilde V^i) (t)}_{\L\infty (\Omega \times B (0, K \norma{\rho_o}_{\L1 (\Omega;\reali^n)});\reali^N)}
  \end{align*}
  and
  \begin{align*}
    & \norma{\div (u^i - \tilde u^i) (t)}_{\L\infty (\Omega;\reali)}
    \\
    \leq \
    & \esssup_{x \in \Omega} \modulo{
      \div \left(
      V^i\left(t,x,\left(\mathcal{J}^i\rho (t)\right) (x)\right)
      -
      \tilde V^i\left(t,x,\left(\mathcal{J}^i\tilde \rho (t)\right) (x)\right)
      \right)
      }
    \\
    & + \esssup_{x\in\Omega} \left|
      \nabla_w V^i \left(t,x,\left(\mathcal{J}^i\rho (t)\right) (x)\right) \cdot
      \nabla \left(\mathcal{J}^i\rho (t)\right) (x)\right.
    \\
    &\qquad\qquad\qquad \left.
      -
      \nabla_w \tilde V^i \left(t,x,\left(\mathcal{J}^i\tilde \rho (t)\right) (x)\right) \cdot
      \nabla \left(\mathcal{J}^i\tilde \rho (t)\right) (x)
      \right|
    \\
    \leq \
    & \norma{\nabla_w \div V^i}_{\L\infty (\Sigma_t;\reali^m)} \,
      \norma{
      \mathcal{J}^i\rho (t)
      -
      \mathcal{J}^i\tilde \rho (t)
      }_{\L\infty (\Omega;\reali^m)}
    \\
    & + \norma{
      \div \left( V^i - \tilde V^i \right) (t)
      }_{\L\infty (\Omega \times B (0, K \norma{\rho_o}_{\L1 (\Omega;\reali^n)});\reali)}
    \\ & + \norma{\nabla_w V^i}_{\L\infty (\Sigma_t;\reali^{N \times m})}
         \norma{\nabla \mathcal{J}^i\rho (t) -
         \nabla\mathcal{J}^i\tilde \rho (t)}_{\L\infty (\Omega;\reali^{m\times N})}
    \\
    &  +
      \norma{\nabla_w V^i (t) - \nabla_w \tilde V^i (t)}_{\L\infty (\Omega \times B (0, K \norma{\rho_o}_{\L1 (\Omega;\reali^n)});\reali^{N \times m})}
      \norma{\nabla \mathcal{J}^i\tilde \rho (t)}_{\L\infty (\Omega;\reali^{m \times N})}
    \\
    \leq \
    & \mathcal{V} \, K \,
      \norma{\left(\rho - \tilde \rho\right) (t)}_{\L1 (\Omega;\reali^n)}
      + \norma{
      \div \left( V^i - \tilde V^i \right) (t)
      }_{\L\infty (\Omega \times B (0, K \norma{\rho_o}_{\L1 (\Omega;\reali^n)});\reali)}
    \\ & + \mathcal{V} \,
         \mathcal{K}\!\left( \norma{\rho_o}_{\L1 (\Omega;\reali^n)}\right)
         \norma{\left(\rho  - \tilde \rho\right) (t)}_{\L1 (\Omega;\reali^n)}
    \\
    &  + K \, \norma{\rho_o}_{\L1 (\Omega,\reali^n)}
      \norma{\nabla_w\left( V^i  - \tilde V^i\right) (t)}_{\L\infty (\Omega \times B (0, K \norma{\rho_o}_{\L1 (\Omega;\reali^n)});\reali^{N \times m})}.
  \end{align*}
  Therefore, for all $t \in I$, by \eqref{eq:14} in
  Lemma~\ref{lem:stab}, we have
  \begin{align*}
    & \norma{\rho^i (t) - \tilde \rho^i (t)}_{\L1 (\Omega,\reali)}
    \\
    \leq \
    & \exp\left( t \, \mathcal{V} \left(
      1 + K \, \norma{\rho_o}_{\L1 (\Omega;\reali^n)}
      \right)\right)
      \biggl[
      \mathcal{O} (1) \norma{\rho_o}_{\L\infty (\Omega;\reali^n)}
      +
      \tv (\rho_o)
    \\
    & \qquad +
      t \, \mathcal{V} \,  \norma{\rho_o}_{\L1 (\Omega;\reali^n)}
      \left(
      1 + \norma{\rho_o}_{\L1 (\Omega;\reali^n)} \left(
      K + \mathcal{K}\!\left(\norma{\rho_o}_{\L1 (\Omega;\reali^n)}\right)
      + K^2 \, \norma{\rho_o}_{\L1 (\Omega;\reali^n)}
      \right)
      \right)
      \biggr]
    \\
    & \times \left(
      \mathcal{V} \, K \,
      \int_0^t \norma{\rho (s) - \tilde \rho (s)}_{\L1 (\Omega;\reali^n)} \d{s}
      +
      \int_0^t \norma{(V^i-\tilde V^i) (s)}_{\L\infty (\Omega \times B (0, K \norma{\rho_o}_{\L1 (\Omega;\reali^n)});\reali^N)} \d{s}
      \right)
    \\
    & + \norma{\rho_o}_{\L1 (\Omega;\reali^n)} \, \mathcal{V} \, \left(
      K + \mathcal{K}\!\left(\norma{\rho_o}_{\L1 (\Omega;\reali^n)}\right)
      \right) \int_0^t \norma{\rho (s) - \tilde \rho (s)}_{\L1 (\Omega;\reali^n)} \d{s}
    \\
    & \quad + \norma{\rho_o}_{\L1 (\Omega;\reali^n)} \, \int_0^t \norma{
      \div \left( V^i - \tilde V^i \right) (s)
      }_{\L\infty (\Omega \times B (0, K \norma{\rho_o}_{\L1 (\Omega;\reali^n)});\reali)} \d{s}
    \\
    & \quad +
      K \, \norma{\rho_o}_{\L1 (\Omega;\reali^n)}^2 \, \int_0^t
      \norma{\nabla_w\left( V^i  - \tilde V^i\right) (s)}_{\L\infty (\Omega \times B (0, K \norma{\rho_o}_{\L1 (\Omega;\reali^n)});\reali^{N \times m})}\d{s}
    \\
    \leq \
    & b (t) \,
      \int_o^t \norma{\rho (s) - \tilde \rho (s)}_{\L1 (\Omega;\reali^n)} \d{s}
      + c (t)
      \int_0^t \norma{V (s) - \tilde V (s)}_{\C1(\Omega \times B (0, K \norma{\rho_o}_{\L1 (\Omega;\reali^n)});\reali^{nN})} \d{s},
  \end{align*}
  where we denote
  \begin{align*}
    a (t) = \
    & \exp\left( t \, \mathcal{V} \left(
      1 + K \, \norma{\rho_o}_{\L1 (\Omega;\reali^n)}
      \right)\right)
      \biggl[
      \mathcal{O} (1) \norma{\rho_o}_{\L\infty (\Omega;\reali^n)}
      +
      \tv (\rho_o)
    \\
    & \quad +
      t \, \mathcal{V} \,  \norma{\rho_o}_{\L1 (\Omega;\reali^n)}
      \left(
      1 + \norma{\rho_o}_{\L1 (\Omega;\reali^n)} \left(
      K + \mathcal{K}\!\left(\norma{\rho_o}_{\L1 (\Omega;\reali^n)}\right)
      + K^2 \, \norma{\rho_o}_{\L1 (\Omega;\reali^n)}
      \right)
      \right)
      \biggr]
    \\
    b (t) = \
    &
      \mathcal{V} \, K \,
      a (t) +
      \norma{\rho_o}_{\L1 (\Omega;\reali^n)} \, \mathcal{V} \, \left(
      K + \mathcal{K}\!\left(\norma{\rho_o}_{\L1 (\Omega;\reali^n)}\right)
      \right)
    \\
    c (t) = \
    & a (t) + \norma{\rho_o}_{\L1 (\Omega;\reali^n)} \left(
      1 +  K \, \norma{\rho_o}_{\L1 (\Omega;\reali^n)}
      \right).
  \end{align*}
  Applying Gronwall Lemma to the resulting inequality
  \begin{align*}
    \norma{\rho (t) - \tilde \rho (t)}_{\L1 (\Omega,\reali^n)}
    \leq \
    & b (t) \,
      \int_o^t \norma{\rho (s) - \tilde \rho (s)}_{\L1 (\Omega;\reali^n)} \d{s}
    \\
    & \qquad +
      c (t)
      \int_0^t \norma{V (s) - \tilde V (s)}_{\C1(\Omega \times B (0, K \norma{\rho_o}_{\L1 (\Omega;\reali^n)});\reali^{nN})} \d{s}
  \end{align*}
  yields
  \begin{align*}
    \norma{\rho (t) - \tilde \rho (t)}_{\L1 (\Omega;\reali^n)}
    \leq \
    & c (t) \int_0^t \norma{V (s) - \tilde V (s)}_{\C1(\Omega \times B (0, K \norma{\rho_o}_{\L1 (\Omega;\reali^)});\reali^{nN})} \d{s}
    \\
    & +
      b (t) \, e^{\int_0^t b (s) \d{s}}
      \int_0^t c (s) \, e^{-\int_0^s b (\tau) \d{\tau}} \d{s}.
  \end{align*}
  Since
  $ e^{-\int_0^t b (\tau) \d{\tau}} + b (t) \int_0^t e^{-\int_0^s b
    (\tau) \d{\tau}}\d{s} \leq \frac{b (t)}{b (0)}$ we get
  \begin{equation}
    \label{eq:21}
    \!\!\!\!
    \norma{\rho (t) - \tilde \rho (t)}_{\L1 (\Omega;\reali^n)}
    \leq
    c (t) \frac{b (t)}{b (0)} e^{t \, b (t)}
    \int_0^t
    \norma{V (s) - \tilde V (s)}_{\C1(\Omega \times B (0, K \norma{\rho_o}_{\L1 (\Omega;\reali^n)});\reali^{nN})} \d{s} \!
  \end{equation}
  completing the proof.
\end{proofof}

\section{Proofs Related to Section~\ref{sec:EX}}
\label{sec:Crowd}

\begin{lemma}
  \label{lem:z}
  Let $\Omega$ and $\eta$ satisfy~$\boldsymbol{(\Omega)}$
  and~$\boldsymbol{(\eta)}$, with $r_\Omega \leq \ell_\eta/4$.  Then,
  the function $z$ defined in~\eqref{eq:z} satisfies:
  \begin{enumerate}[$(\boldsymbol{z}.\bf 1)$]
  \item There exists a $c \in \left]0, 1\right[$, depending only on
    $\Omega$ and on $\eta$, such that $z (\Omega) \subseteq [c, 1]$.
  \item $z \in \C2 (\Omega; \reali)$ and
    $\nabla z (x) = \int_\Omega \nabla \eta (x-y) \d{y}$,
    $\nabla^2 z (x) = \int_\Omega \nabla^2 \eta (x-y) \d{y}$.
  \item For all $x \in \Omega$ such that
    $B (x,\ell_\eta) \subseteq \Omega$, $z (x) = 1$.
  \end{enumerate}
\end{lemma}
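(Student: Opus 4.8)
The plan is to treat the three items in order of increasing difficulty, the only substantial point being the uniform lower bound in $(\boldsymbol{z}.1)$.

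The upper bound $z\le1$ and $(\boldsymbol{z}.3)$ are immediate: by~$\boldsymbol{(\eta)}$ one has $\spt\eta=\overline{B (0,\ell_\eta)}$, so for every $x$ the integrand $y\mapsto\eta (x-y)$ is supported in $B (x,\ell_\eta)$, whence $z (x)\le\int_{\reali^N}\eta (x-y)\d{y}=\int_{\reali^N}\eta=1$, with equality whenever $B (x,\ell_\eta)\subseteq\Omega$. For $(\boldsymbol{z}.2)$, write $z=\caratt\Omega * \eta$; since by~$\boldsymbol{(\eta)}$ the radial kernel $\eta (x)=\tilde\eta (\modulo{x})$ is of class $\C2$ on $\reali^N$ with compact support and $\caratt\Omega\in\L1 (\reali^N;\reali)$ ($\Omega$ being bounded), the classical differentiation‑under‑the‑integral rule $\nabla^k (\caratt\Omega * \eta)=\caratt\Omega * \nabla^k\eta$ for $k=0,1,2$ gives $z\in\C2 (\Omega;\reali)$ and, after rewriting the convolution as an integral over $\Omega$, the two displayed identities.

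For the lower bound in $(\boldsymbol{z}.1)$ I would first record that $\tilde\eta\ge0$, $\tilde\eta'\le0$ and $\spt\tilde\eta=[0,\ell_\eta]$ force $\tilde\eta>0$ on all of $[0,\ell_\eta)$: otherwise $\tilde\eta$, being non‑increasing, would vanish on some interval adjacent to $\ell_\eta$, contradicting the support condition. Set $c_1=\int_{B (0,\ell_\eta/4)}\eta>0$, $c_2=\tilde\eta (\ell_\eta/2)\,\modulo{B (0,r_\Omega)}>0$ and $c=\min\{c_1,c_2\}$; I claim $z\ge c$ on $\Omega$. Fix $x\in\Omega$ and let $d=\mathrm{dist} (x,\partial\Omega)>0$. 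If $d>\ell_\eta/4$, then $B (x,\ell_\eta/4)\subseteq\Omega$, so $z (x)\ge\int_{B (x,\ell_\eta/4)}\eta (x-y)\d{y}=c_1$. If $0<d\le\ell_\eta/4$, pick $\xi\in\partial\Omega$ nearest to $x$ (it exists, $\partial\Omega$ being compact): by the first order optimality condition and the $\C1$ regularity of $\partial\Omega$, $x=\xi+d\,\nu (\xi)$ with $\nu (\xi)$ the inner unit normal at $\xi$. The interior sphere condition at $\xi$ provides $x_0$ with $B (x_0,r_\Omega)\subseteq\Omega$ and $\xi\in\partial B (x_0,r_\Omega)$; then $\xi$ is the unique point of $\partial\Omega$ nearest to $x_0$, so likewise $x_0=\xi+r_\Omega\,\nu (\xi)$, whence $\modulo{x-x_0}=\modulo{d-r_\Omega}<\ell_\eta/4$ (both $d$ and $r_\Omega$ lie in $\left]0,\ell_\eta/4\right]$) and therefore $\modulo{x-y}\le\modulo{x-x_0}+r_\Omega<\ell_\eta/2$ for every $y\in B (x_0,r_\Omega)\subseteq\Omega$. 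As $\tilde\eta$ is non‑increasing, $\eta (x-y)\ge\tilde\eta (\ell_\eta/2)$ on that ball, hence $z (x)\ge\int_{B (x_0,r_\Omega)}\eta (x-y)\d{y}\ge c_2$. Thus $z\ge c$ on $\Omega$; moreover $c<1$, since $\tilde\eta>0$ on $\left]\ell_\eta/4,\ell_\eta\right[$ makes $c_1=\int_{B (0,\ell_\eta/4)}\eta<\int_{\reali^N}\eta=1$. Finally $c$ depends only on $\eta$ and on $r_\Omega$, hence only on $\eta$ and $\Omega$.

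The delicate point is the second case above: the hypothesis $r_\Omega\le\ell_\eta/4$ is what guarantees that the interior ball supplied by~$\boldsymbol{(\Omega)}$ sits inside $B (x,\ell_\eta/2)$, and this rests on the colinearity of $x$, $x_0$, $\xi$ along the inner normal at $\xi$ — a consequence of the $\C1$ smoothness of $\partial\Omega$ and of the minimality defining $\xi$. That $r_\Omega\le\ell_\eta/4$ may be assumed is harmless, since any positive number not exceeding the interior sphere radius of $\Omega$ is again an admissible such radius; a softer alternative for the lower bound is to note that $z=\caratt\Omega * \eta$ is continuous on the compact set $\overline\Omega$ and strictly positive there, but the argument above has the advantage of exhibiting $c$ explicitly.
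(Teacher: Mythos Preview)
Your proof is correct and follows the same overall strategy as the paper's: both arguments split into a far-from-boundary case (a small ball around $x$ already lies in $\Omega$) and a near-boundary case handled via the interior sphere condition, and both treat $(\boldsymbol{z}.2)$ and $(\boldsymbol{z}.3)$ in exactly the same way.

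The only genuine difference is in how the lower bound is extracted in the near-boundary case. The paper picks \emph{any} boundary point $\xi\in B(x,\ell_\eta/2)\cap\partial\Omega$, places the interior ball $B(x_\xi,r_\Omega)$ inside $B(x,\ell_\eta)$ via the plain triangle inequality $\norma{y-x}\le 2r_\Omega+\ell_\eta/2\le\ell_\eta$, and then invokes Weierstra\ss\ on the continuous map $\alpha\mapsto\int_{B(\alpha,r_\Omega)}\eta(-y)\d{y}$ over the compact set $B(0,\ell_\eta-r_\Omega)$ to obtain a strictly positive (but non-explicit) minimum $c$. You instead choose $\xi$ as the \emph{nearest} boundary point and observe that both $x$ and the interior-ball centre $x_0$ lie on the inner normal at $\xi$; this yields the sharper inclusion $B(x_0,r_\Omega)\subset B(x,\ell_\eta/2)$ and hence, by the monotonicity of $\tilde\eta$, the explicit constant $c_2=\tilde\eta(\ell_\eta/2)\,\modulo{B(0,r_\Omega)}$. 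Your route buys an explicit $c$ at the price of the colinearity argument (which needs the $\C1$ smoothness of $\partial\Omega$ and the first-order optimality condition), whereas the paper's compactness argument is cruder but needs nothing beyond the triangle inequality.
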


\begin{proof}
  Consider first~$(\boldsymbol{z}.\bf 1)$. For all $x \in \Omega$ such
  that $B (x, \ell_\eta/2) \subseteq \Omega$, we have
  \begin{displaymath}
    z (x)
    =
    \int_\Omega \eta (x-y) \d{y}
    \geq
    \int_{B (x,\ell_\eta/2)} \eta (x-y) \d{y}
    \geq
    \int_{B (x,r_\Omega)} \eta (x-y) \d{y}
    =
    \int_{B (0,r_\Omega)} \eta (-y) \d{y} \,.
  \end{displaymath}
  If on the other hand $B (x, \ell_\eta/2)$ is not contained in
  $\Omega$, then there exists a
  $\xi \in B (x, \ell_\eta/2) \cap \partial\Omega$.  Call $x_\xi$ a
  point such that $\xi \in \partial B (x_\xi, r_\Omega)$ and
  $B(x_\xi, r_\Omega) \subseteq \Omega$, which exists by the interior
  sphere condition, ensured by~\textbf{($\boldsymbol\eta$)}. Then, for
  all $y \in B (x_\xi,r_\Omega)$, we have
  \begin{displaymath}
    \norma{y-x}
    \leq
    \norma{y-x_\xi} +\norma{x_\xi-\xi} + \norma{\xi-x}
    \leq
    2\, r_\Omega + \frac{1}{2}\, \ell_\eta
    \leq \ell_\eta
  \end{displaymath}
  showing that $B (x_\xi, r_\Omega) \subseteq B (x, \ell_\eta)$, so
  that $B (x_\xi - x, r_\Omega) \subseteq B (0, \ell_\eta)$ and
  \begin{displaymath}
    z (x)
    =
    \int_\Omega\eta (x-y) \d{y}
    \geq
    \int_{B (x_\xi, r_\Omega)} \eta (x-y) \d{y}
    =
    \int_{B (x_\xi-x, r_\Omega)} \eta (-y) \d{y}\,.
  \end{displaymath}
  In both cases, applying {Weiestra\ss} Theorem to the continuous map
  $\alpha \to \int_{B (\alpha, r_\Omega)} \eta (-y) \d{y}$, for all
  $x \in \Omega$ we obtain
  \begin{align*}
    z (x)
    \geq \
    & \inf_{\alpha \colon B (\alpha, r_\Omega) \subseteq  B (0,\ell_\eta)}
      \int_{B (\alpha, r_\Omega)} \eta (-y) \d{y}
    \\
    = \
    & \inf_{\alpha \in B (0,\ell_\eta - r_\Omega)}
      \int_{B (\alpha, r_\Omega)} \eta (-y) \d{y}
      \ =
      \min_{\alpha \in B (0,\ell_\eta - r_\Omega)}
      \int_{B (\alpha, r_\Omega)} \eta (-y) \d{y} \,.
  \end{align*}
  Define now
  $c = \min_{\alpha \in B (0,\ell_\eta - r_\Omega)} \int_{B (\alpha,
    r_\Omega)} \eta (-y) \d{y}$:
  note that this quantity is strictly positive and strictly less than
  $1$ by~\textbf{($\boldsymbol\eta$)}.  The proof
  of~$(\boldsymbol{z}.\bf 1)$ is completed.

  The proof of~$(\boldsymbol{z}.\bf 2)$ follows noting that
  $z = \caratt{\Omega} * \eta$, applying the usual properties of the
  convolution:
  $\nabla z = \nabla (\caratt{\Omega} * \eta) = \caratt{\Omega} *
  \nabla z$ and a similar computation yields $\nabla^2 z$.

  The property~$(\boldsymbol{z}.\bf 3)$ is immediate.
\end{proof}

\begin{proofof}{Lemma~\ref{lem:Omega}}
  The $\C2$ regularity follows from the standard properties of the
  convolution product and from Lemma~\ref{lem:z}. The lower and upper
  bounds on $\rho \sOmega \eta$ are immediate. For the latter one, for
  instance, $    (\rho \sOmega \eta) (x)
    \leq
    \dfrac{1}{z (x)} \,
    \left(\esssup_{B (x,\ell_\eta) \cap \Omega} \rho\right) \,
    \int_\Omega \eta (x-y) \d{y}
    =
    \esssup_{B (x,\ell_\eta) \cap \Omega} \rho$,
  % \begin{displaymath}
  %   % (\rho \sOmega \eta) (x)
  %   % & \geq
  %   % & \dfrac{1}{z (x)} \,
  %   % \left(\essinf_{B (x,\ell_\eta) \cap \Omega} \rho\right) \,
  %   % \int_\Omega \eta (x-y) \d{y}
  %   % =
  %   % \essinf_{B (x,\ell_\eta) \cap \Omega} \rho \,,
  %   % \\
  %   (\rho \sOmega \eta) (x)
  %   \leq
  %   \dfrac{1}{z (x)} \,
  %   \left(\esssup_{B (x,\ell_\eta) \cap \Omega} \rho\right) \,
  %   \int_\Omega \eta (x-y) \d{y}
  %   =
  %   \esssup_{B (x,\ell_\eta) \cap \Omega} \rho \,,
  % \end{displaymath}
  completing the proof.
\end{proofof}

\begin{proofof}{Lemma~\ref{lem:Corridor}}
  With reference to the notation in Section~\ref{sec:MR}, set $N=2$,
  $n=1$, $m=3$.  Call $\boldsymbol{i}$, respectively $\boldsymbol{j}$,
  a unit vector directed along the $x_1$, respectively $x_2$,
  axis. Define
  \begin{displaymath}
    V (t, x, A)
    =
    v (A_1)
    \left(
      w (x)
      -
      \beta \,
      \frac{A_2 \, \boldsymbol{i} + A_3 \, \boldsymbol{j}}{\sqrt{1 + {A_2}^2 + {A_3}^2}}
    \right)
    \quad \mbox{ with }\quad
    \mathcal{J} (\rho) =
    \left[%
      \begin{array}{c}
        \rho \sOmega \eta_1
        \\
        \partial_1 (\rho \sOmega \eta_2)
        \\
        \partial_2 (\rho \sOmega \eta_2)
      \end{array}
    \right] \,.
  \end{displaymath}
  Clearly, $V \in \C2 (\Omega \times \reali^3; \reali^2)$. The $\C2$
  boundedness of $V$ follows from that of $v$, from that of $w$, from
  that of the map
  $(A_2, A_3) \to \frac{A_2 \, \boldsymbol{i} + A_3 \,
    \boldsymbol{j}}{\sqrt{1 + {A_2}^2 + {A_3}^2}}$ and from the
  compactness of $\overline{\Omega}$. Hence, \textbf{(V)} holds.

  Concerning~\textbf{(J)}, the $\C2$ regularity follows
  from~\textbf{($\boldsymbol\eta$)}, from Lemma~\ref{lem:z} and from
  the assumption $\eta_2 \in \C3$. To prove~\textbf{(J.1)}, with the
  notation in Lemma~\ref{lem:z}, consider the different components of
  $\mathcal{J}$ separately. Recall that $z = \caratt{\Omega} * \eta$
  and write the first component of $\mathcal{J} \rho$ as
  $ \rho \sOmega \eta_1 = \bigl((\rho \, \caratt{\Omega}) *
  \eta\bigr)/z$:
  \begin{align*}
    \norma{\rho \sOmega \eta_1}_{\L\infty (\Omega; \reali)}
    \leq \
    & \dfrac{\norma{\eta_1}_{\L\infty (\reali^2; \reali)}}c \,
      \norma{\rho}_{\L1 (\Omega;\reali)}
    \\
    \nabla (\rho \sOmega \eta_1)
    = \
    & \dfrac{1}{z} \,
      \left((\rho \caratt{\Omega}) * \nabla \eta_1\right)
      -
      \dfrac{\caratt{\Omega} * \nabla \eta_1}{z^2} \,
      \left((\rho \caratt{\Omega}) * \eta_1\right)
    \\
    \norma{\nabla (\rho \sOmega \eta_1)}_{\L\infty (\Omega; \reali^2)}
    \leq \
    & \left(
      \dfrac{\norma{\nabla\eta_1}_{\L\infty (\reali^2; \reali^2)}}{c}
      +
      \dfrac{
      \norma{\nabla\eta_1}_{\L1 (\reali^2; \reali^2)} \,
      \norma{\eta_1}_{\L\infty (\reali^2; \reali)}
      }{c^2}
      \right)
      \norma{\rho}_{\L1 (\Omega;\reali)}
    \\
    \nabla^2 (\rho \sOmega \eta_1)
    = \
    & \frac{1}{z} \left((\rho  \caratt{\Omega}) * \nabla^2 \eta_1\right)
      - 2 \, \frac{\caratt{\Omega} * \nabla \eta_1}{z^2}
      \left((\rho  \caratt{\Omega}) * \nabla \eta_1\right)
    \\
    & -
      \left((\rho  \caratt{\Omega}) * \eta_1\right)
      \left(
      \frac{\caratt{\Omega} * \nabla^2 \eta_1}{z^2}
      - \frac{2}{z^3}
      \left(\caratt{\Omega} * \nabla \eta_1 \right)
      \otimes
      \left(\caratt{\Omega} * \nabla \eta_1 \right)
      \right)
    \\
    \norma{\nabla^2 (\rho \sOmega \eta_1)}_{\L\infty (\Omega; \reali^{2\times2})}
    \leq
    &  \left[
      \frac{\norma{\nabla^2 \eta_1}_{\L\infty (\reali^2; \reali^{2\times 2})}}{c}
      +
      \frac{
      \norma{\nabla\eta_1}_{\L1 (\reali^2; \reali^2)} \,
      \norma{\nabla \eta_1}_{\L\infty (\reali^2; \reali^2)}
      }{c^2}\right.
    \\
    & \left.
      {+}
      \frac{\norma{\eta_1}_{\L\infty (\reali^2;\reali)}}
      {c^2} \!
      \left( \!
      \norma{\nabla^2 \eta_1}_{\L1 (\reali^2; \reali^{2\times 2})}
      \!{+}
      \frac{2 \norma{\nabla \eta_1}^2_{\L1 (\reali^2; \reali^2)}}{c}
      \! \right) \!
      \right] \!\!
      \norma{\rho}_{\L1 (\Omega;\reali)}.
  \end{align*}
  The estimates of $\partial_j (\rho \sOmega \eta_2)$ and
  $\nabla \partial_j (\rho \sOmega \eta_2)$, for $j=1,2$, are entirely
  analogous. We only check
  \begin{align*}
    \nabla^2 \partial_j (\rho \sOmega \eta_2)
    = \
    & \frac{1}{z} \left((\rho  \caratt{\Omega}) * \nabla^2 \partial_j \eta_2\right)
      -  \frac{\caratt{\Omega} * \partial_j \eta_2}{z^2}
      \left((\rho  \caratt{\Omega}) * \nabla^2  \eta_2\right)
    \\
    & - 2 \, \frac{\caratt{\Omega} * \nabla \eta_2}{z^2}
      \left((\rho  \caratt{\Omega}) * \nabla\partial_j \eta_2\right)
      -2 \, \frac{\caratt{\Omega} * \nabla \partial_j \eta_2}{z^2}
      \left((\rho  \caratt{\Omega}) * \nabla \eta_2\right)
    \\
    & + \frac{4}{z^3}
      \left(\caratt{\Omega} * \nabla \eta_2\right)
      \left(\caratt{\Omega} * \partial_j \eta_2\right)
      \left((\rho  \caratt{\Omega}) * \nabla \eta_2\right)
    \\
    & - \frac{\caratt{\Omega} * \nabla^2 \eta_2}{z^2} \!
      \big((\rho  \caratt{\Omega}) * \partial_j \eta_2\big)
      + \frac{2}{z^3}
      \big(\caratt{\Omega} * \nabla^2 \eta_2\big)
      \big(\caratt{\Omega} * \partial_j \eta_2\big)
      \big((\rho  \caratt{\Omega}) *  \eta_2\big)
    \\
    & -\frac{\caratt{\Omega} * \nabla^2 \partial_j \eta_2}{z^2}
      \big((\rho  \caratt{\Omega}) *  \eta_2\big)
      + \frac{2}{z^3} \!
      \big(\caratt{\Omega} * \nabla \eta_2\big) {\otimes}
      \big(\caratt{\Omega} * \nabla \eta_2\big)
      \big((\rho  \caratt{\Omega}) *  \partial_j \eta_2\big)
    \\
    & - 6 \, \frac{\caratt{\Omega} * \partial_j \eta_2}{z^4}
      \left(\caratt{\Omega} * \nabla \eta_2\right) \otimes
      \left(\caratt{\Omega} * \nabla \eta_2\right)
      \left((\rho  \caratt{\Omega}) *   \eta_2\right)
    \\
    & + \frac{4}{z^3}
      \left(\caratt{\Omega} * \nabla \eta_2\right)
      \left(\caratt{\Omega} * \nabla \partial_j\eta_2\right)
      \left((\rho  \caratt{\Omega}) *   \eta_2\right)
  \end{align*}
  \begin{align*}
    & \norma{\nabla^2 \partial_j (\rho \sOmega \eta_2)}_{\L\infty (\Omega; \reali^{2\times2})}
    \\
    \leq \
    &\left(
      \frac{\norma{\nabla^2 \partial_j \eta_2}_{\L\infty (\reali^2; \reali^{2 \times 2})}}{c}
      +\frac{\norma{\partial_j \eta_2}_{\L1 (\reali^2; \reali)} \,
      \norma{\nabla^2 \eta_2}_{\L\infty (\reali^2; \reali^{2 \times 2})}}
      {c^2}
      \right.
    \\
    & + 2 \, \frac{\norma{\nabla  \eta_2}_{\L1 (\reali^2; \reali^2)} \,
      \norma{\nabla \partial_j \eta_2}_{\L\infty (\reali^2; \reali^2 )}}
      {c^2}
      + 2 \, \frac{\norma{\nabla  \partial_j \eta_2}_{\L1 (\reali^2; \reali^2)} \,
      \norma{\nabla \eta_2}_{\L\infty (\reali^2; \reali^2 )}}
      {c^2}
    \\
    & + 4 \, \frac{\norma{\nabla  \eta_2}_{\L1 (\reali^2; \reali^2)} \,
      \norma{\partial_j \eta_2}_{\L1 (\reali^2; \reali)} \,
      \norma{\nabla \eta_2}_{\L\infty (\reali^2; \reali^2)}}
      {c^3}
      + \frac{\norma{\nabla^2 \eta_2}_{\L1 (\reali^2; \reali^{2 \times 2})} \,
      \norma{\partial_j \eta_2}_{\L\infty (\reali^2; \reali)}}
      {c^2}
    \\
    & +2 \, \frac{\norma{\nabla^2 \eta_2}_{\L1 (\reali^2; \reali^{2 \times 2})} \,
      \norma{\partial_j \eta_2}_{\L1 (\reali^2; \reali)} \,
      \norma{\eta_2}_{\L\infty (\reali^2; \reali)}}
      {c^3}
      + \frac{\norma{\nabla^2 \partial_j \eta_2}_{\L1 (\reali^2; \reali^{2 \times 2})} \,
      \norma{\eta_2}_{\L\infty (\reali^2; \reali)}}{c^2}
    \\
    & + 2 \, \frac{\norma{\nabla  \eta_2}^2_{\L1 (\reali^2; \reali^2)} \,
      \norma{\partial_j \eta_2}_{\L\infty (\reali^2; \reali)}}{c^3}
      + 6 \, \frac{\norma{\partial_j \eta_2}_{\L1 (\reali^2; \reali)} \,
      \norma{\nabla^2 \eta_2}^2_{\L1 (\reali^2; \reali^2)} \,
      \norma{\eta_2}_{\L\infty (\reali^2; \reali)}}{c^4}
    \\
    &
      \left.
      + 4 \, \frac{\norma{\nabla\eta_2}_{\L1 (\reali^2; \reali^2)} \,
      \norma{\nabla \partial_j \eta_2}_{\L1 (\reali^2; \reali^2)} \,
      \norma{\eta_2}_{\L\infty (\reali^2; \reali)}}{c^3}
      \right)
      \norma{\rho}_{\L1 (\Omega;\reali)}.
  \end{align*}
  Finally, \textbf{(J.2)} is now immediate thanks to the linearity of
  $\mathcal{J}$.
\end{proofof}

\begin{proofof}{Lemma~\ref{lem:TwoPop}}
  Note that~\eqref{eq:24} fits into~\eqref{eq:1} setting $N=2$, $n=2$,
  $m=10$ and
  \begin{equation}
    \label{eq:17}
    \begin{array}{@{}c@{}}
      \begin{array}{@{}r@{\,}c@{\,}l}
        V^1 (t,x,A)
        & =
        & v^1 (A_1) \left(
          w^1 (x)
          -
          \frac{\beta_{11} (A_3 \boldsymbol{i} + A_4 \boldsymbol{j})}{\sqrt{1+{A_3}^2+{A_4}^2}}
          -
          \frac{\beta_{12} (A_5 \boldsymbol{i} + A_6 \boldsymbol{j})}{\sqrt{1+{A_5}^2+{A_6}^2}}
          \right) \,,
        \\
        V^2 (t,x,A)
        & =
        & v^2 (A_2) \left(
          w^2 (x)
          -
          \frac{\beta_{21} (A_7 \boldsymbol{i} + A_8 \boldsymbol{j})}{\sqrt{1+{A_7}^2+{A_8}^2}}
          -
          \frac{\beta_{22} (A_9 \boldsymbol{i} + A_{10} \boldsymbol{j})}{\sqrt{1+{A_9}^2+{A_{10}}^2}}
          \right) \,,
      \end{array}
      \\
      \mathcal{J} (\rho)_1 = (\rho_1+\rho_2) \sOmega \eta^{11}_1
      \,,\qquad
      \mathcal{J} (\rho)_{3,4} = \nabla_x (\rho_1 \sOmega \eta^{11}_2)
      \,,\qquad
      \mathcal{J} (\rho)_{5,6} = \nabla_x (\rho_1 \sOmega \eta^{12}_2)\,,
      \\
      \mathcal{J} (\rho)_2 = (\rho_1+\rho_2) \sOmega \eta^{22}_1
      \,,\qquad
      \mathcal{J} (\rho)_{7,8} = \nabla_x (\rho_1 \sOmega \eta^{21}_2)
      \,,\qquad
      \mathcal{J} (\rho)_{9,10} = \nabla_x (\rho_1 \sOmega \eta^{22}_2)\,,
    \end{array}
  \end{equation}
  where $\nabla_x =[\partial_1 \quad \partial_2]$.  The same
  computations as in the proof of Lemma~\ref{lem:Corridor} show
  that~\textbf{(V)} and~\textbf{(J)} hold, completing the proof.
\end{proofof}

\noindent\textbf{Acknowledgement:} The second author was supported at the University of Brescia by the MATHTECH project funded by CNR and INdAM. Both authors acknowledge the PRIN 2015 project \emph{Hyperbolic Systems of Conservation Laws
  and Fluid Dynamics: Analysis and Applications} and the INDAM--GNAMPA
2017 project \emph{Conservation Laws: from Theory to Technology}.

\let\OLDthebibliography\thebibliography
\renewcommand\thebibliography[1]{
  \OLDthebibliography{#1}
  \setlength{\parskip}{0pt}
  \setlength{\itemsep}{1pt plus 0.3ex}
}

{
  \small

  \bibliography{ColomboRossi}

  \bibliographystyle{abbrv}

}

\end{document}